\newtheorem{thm}{Theorem}[section]
\newtheorem{cor}[thm]{Corollary}
\newtheorem{lem}[thm]{Lemma}
\newtheorem{prop}[thm]{Proposition}
\newtheorem{prob}[thm]{Problem}
\newtheorem{fact}[thm]{Fact}
\theoremstyle{definition}
\newtheorem{defn}[thm]{Definition}
\newtheorem{example}[thm]{Example}
\newtheorem{rem}[thm]{Remark}
\newcommand{\ball}[1]{\ensuremath{B_{#1}}}
\newcommand{\bidual}[1]{\ensuremath{{#1}^{**}}}
\newcommand{\cderiv}[2]{\ensuremath{#1}^{({#2})}}
\newcommand{\closure}[1]{\ensuremath{\overline{{#1}}}}
\newcommand{\continuum}{\ensuremath{\mathfrak{c}}}
\newcommand{\czero}{\ensuremath{c_0}}
\newcommand{\czerok}[1]{\ensuremath{c_0({#1})}}
\newcommand{\diam}[1]{\ensuremath{\sdiam{({#1})}}}
\newcommand{\dual}[1]{\ensuremath{{#1}^*}}
\newcommand{\Gdelta}{\ensuremath{G_\delta}}
\newcommand{\lpk}[2]{\ensuremath{\ell_{#1}({#2})}}
\newcommand{\mapping}[3]{\ensuremath{{#1}:{#2}\longrightarrow{#3}}}
\newcommand{\nat}{\mathbb{N}}
\newcommand{\norm}[1]{\ensuremath{\|{#1}\|}}
\newcommand{\normdot}{\ensuremath{\|\cdot\|}}
\newcommand{\oneton}[2]{\ensuremath{{#1}_1,\ldots,{#1}_{#2}}}
\newcommand{\pnorm}[2]{\ensuremath{\|{#1}\|_{#2}}}
\newcommand{\pnormdot}[1]{\ensuremath{\|\cdot\|_{#1}}}
\newcommand{\rat}{\mathbb{Q}}
\newcommand{\real}{\mathbb{R}}
\newcommand{\restrict}[1]{\ensuremath{\!\!\upharpoonright_{#1}}}
\newcommand{\setcomp}[2]{\ensuremath{\left\{{#1}\;:\;\,{#2}\right\}}}
\newcommand{\sig}[2]{\ensuremath{\sigma({#1},{#2})}}
\newcommand{\sph}[1]{\ensuremath{S_{#1}}}
\newcommand{\tri}{{\displaystyle |\kern-.9pt|\kern-.9pt|}}
\newcommand{\trinorm}[1]{\ensuremath{\tri{#1}\tri}}
\newcommand{\trinormdot}{\ensuremath{\tri\cdot\tri}}
\newcommand{\weakstar}{\ensuremath{w^*}}
\newcommand{\wone}{\ensuremath{\omega_1}}
\DeclareMathOperator{\card}{card} %
\DeclareMathOperator{\conv}{conv} %
\DeclareMathOperator{\sdiam}{diam} %
\DeclareMathOperator{\str}{st} %
\DeclareMathOperator{\supp}{supp} %
\newcommand{\st}{($*$)}
\begin{document}
\title{Strictly convex norms and topology}
\begin{abstract}
We introduce a new topological property called {\st} and the
corresponding class of topological spaces, which includes spaces
with $\Gdelta$-diagonals and Gruenhage spaces. Using {\st}, we
characterise those Banach spaces which admit equivalent strictly
convex norms, and give an internal topological characterisation of
those scattered compact spaces $K$, for which the dual Banach space
$\dual{C(K)}$ admits an equivalent strictly convex dual norm. We
establish some relationships between {\st} and
other topological concepts, and the position of several well-known
examples in this context. For instance, we show that
$\dual{C(\mathcal{K})}$ admits an equivalent strictly convex dual
norm, where $\mathcal{K}$ is Kunen's compact space. Also, under the
continuum hypothesis CH, we give an example of a compact scattered
non-Gruenhage space having {\st}.
\end{abstract}

\author{J.\ Orihuela, R.\ J.\ Smith and S.\ Troyanski}
\subjclass[2010]{46B03, 46B26, 54G12}
\date{\today}
\thanks{Some of this research was conducted during several visits
of R.\ Smith to the University of Murcia, Spain, from 2007 to 2010,
and during a visit of S.\ Troyanski to University College Dublin, in
2010.} \thanks{J.\ Orihuela and S.\ Troyanski were supported by
MTM2008-05396/MTM Fondos Feder and Fundaci\'on S\'eneca 008848/PI/08
CARM. S.\ Troyanski was also supported by the Institute of
Mathematics and Informatics, Bulgarian Academy of Sciences,
Bulgarian National Fund for Scientific Research contract DO
02-360/2008.} \keywords{Continuum hypothesis, descriptive,
fragmentable, Gruenhage space, $\Gdelta$-diagonal, hereditarily
separable, Kunen compact, Ostaszewski space, rotund, scattered,
strictly convex} \maketitle

\section{Introduction}

Hereafter, all Banach spaces will be assumed real and, unless
explicitly stated otherwise, all topological spaces will be Hausdorff.
Throughout this paper we will be defining new norms on existing
Banach spaces. These new norms will always be equivalent to the given
canonical norms. Banach space notation and terminology is standard
throughout.

A norm $\normdot$ on a Banach space $X$ is said to be {\em strictly
convex} (or rotund) if, given $x,\,y \in X$ satisfying $\norm{x} =
\norm{y} = \norm{\frac{1}{2}(x+y)}$, we have $x=y$ \cite[p.\
404]{clarkson:36}. Geometrically, this means that the unit sphere
$\sph{X}$ of $X$ in this norm has no non-trivial line segments, or,
equivalently, every element of $\sph{X}$ is an extreme point of the
unit ball $\ball{X}$.

Clearly, there are many Banach spaces whose natural norms are not
strictly convex. However, by appealing to the linear and topological
properties of a given space, it is often possible to define a new norm
that is strictly convex. Changing the norm in this way is often called
{\em renorming}. In certain cases, we would
like the new norm to possess in addition some form of lower
semicontinuity. For instance, we may wish for a norm on a dual space
$\dual{X}$ to be $\weakstar$-lower semicontinuous, so that it is the
dual of some norm on $X$. Alternatively, we may like a norm on a
$C(K)$-space to be lower semicontinuous with respect to the topology
of pointwise convergence. Such additional requirements can make
norms much more difficult to construct, but they do bestow certain
benefits. For example, if $\dual{X}$ can be endowed with a strictly
convex dual norm then the predual norm on $X$ is
automatically G\^ateaux smooth, by virtue of \v{S}mulyan's Lemma,
cf.\ \cite[Theorem I.1.4]{dgz:93}.

Despite the natural and intuitive nature of strict convexity, the
question of whether a Banach space may be given such a norm turns
out to be rather difficult to answer in general. A number of
mathematicians have sought to establish more easily verifiable
sufficient conditions and necessary conditions for a space to admit
a strictly convex norm. Before outlining this
paper, we mention some of the contributions to this collective
endeavour. Specialists will realise that it is possible to endow
many (but not all) of the spaces below with norms sporting stronger
properties than strict convexity, but we prefer not to dwell on such
properties here. For a fuller discussion, we refer the reader to
\cite{dgz:93,st:10}.

In \cite[Theorem 9]{clarkson:36}, it is shown that every separable
Banach space admits a strictly convex norm. By following Clarkson's
proof, Day showed that if a Banach space $X$ is separable then
$\dual{X}$ admits a strictly convex dual norm \cite[Theorem
4]{day:55}. If $\Gamma$ is a set then $\czerok{\Gamma}$ admits a
strictly convex norm \cite[Theorem 10]{day:55} (see also
\cite[Definition II.7.2]{dgz:93}). On the other hand, if $\Gamma$
is uncountable then the space $\ell_\infty^c(\Gamma)$ of
countably supported bounded functions $\mapping{x}{\Gamma}{\real}$
with the supremum norm is simply too big to admit a strictly convex
norm \cite[Theorem 8]{day:55} (\cite[Theorem II.7.12]{dgz:93}).

Amir and Lindenstrauss showed that if $X$ is weakly compactly
generated (WCG) then both $X$ and $\dual{X}$ admit a strictly convex
norm and a strictly convex dual norm, respectively \cite[Theorem
3]{al:68}. These results rely on the fact that if a bounded linear
map $\mapping{T}{X}{Y}$ is injective and $Y$ admits a strictly
convex norm, then so does $X$. If $X$ is WCG then we can find such
maps on both $X$ and $\dual{X}$, where $Y = \czerok{\Gamma}$ for
some $\Gamma$. Then \cite[Theorem 10]{day:55} can be applied.

At the time, such a `linear transfer' into some $\czerok{\Gamma}$
was the only way of showing that spaces admitted strictly convex
norms. Moreover, $\ell_\infty^c(\Gamma)$, $\Gamma$ uncountable, was
the `smallest' space known not to admit a strictly convex norm. In
\cite{dl:73}, the authors construct an increasing transfinite
sequence $(X_\alpha)_{1 \leq \alpha < \wone}$ of spaces of Baire-1
functions on $[0,1]$, all admitting strictly convex norms, and none
admitting a bounded linear injective map into any $\czerok{\Gamma}$,
provided $\alpha \geq 2$. Moreover, by refining Day's argument
\cite[Theorem 8]{day:55}, they showed that the union $Y =
\bigcup_{\alpha < \wone} X_\alpha$ does not admit a strictly convex
norm, and that there is no bounded linear injective map from
$\ell_\infty^c([0,1])$ into $Y$.

The fact that the dual of every WCG space admits strictly convex
dual norm, with a necessarily G\^ateaux smooth predual norm,
prompted Lindenstrauss to conjecture that if $X$ admits a G\^ateaux
smooth norm then it must embed as a subspace of some WCG space
\cite{lind:72}. Mercourakis provided a negative answer to this
conjecture by showing that if $X$ is a {\em weakly countably
determined} (WCD) space, then both $X$ and $\dual{X}$ admit strictly
convex norms \cite[Theorems 4.6 and 4.8]{mercourakis:87}, by virtue
of linear transfers (although not into $\czerok{\Gamma}$ in
general).

Papers such as \cite{dl:73,mercourakis:87} suggest that there is no
simple way of characterising strict convexity in terms of linear
structure. Since then, the problem of classifying Banach spaces
admitting strictly convex norms has been approached from a more
topological perspective, and particular attention has been paid to
strictly convex dual norms and $C(K)$-spaces. Any Banach space $X$
embeds isometrically into $C(\ball{\dual{X}},\weakstar)$, and this
fact enables certain results about $C(K)$-spaces to be generalised
to all Banach spaces, by phrasing them in terms of the topological
structure of $(\ball{\dual{X}},\weakstar)$.

For example, if $\dual{X}$ admits a strictly convex {\em dual} norm
then $(\ball{\dual{X}},\weakstar)$ is fragmentable \cite[Theorem
1.1]{ribarska:92}. We can say that a topological space is {\em
fragmentable} if it admits, for each $n \in \nat$, an increasing
well ordered family of open subsets $(U_\xi)_{\xi < \lambda_n}$, with
the property that given distinct points $x$ and $y$, we can find some
$n_0$ and $\xi < \lambda_{n_0}$ such that $\{x,y\} \cap U_\xi$ is a
singleton \cite[Theorem 1.9]{ribarska:87}. The idea of point separation
features throughout this paper. Indeed, the notion of strict
convexity can be viewed as a form of point separation.

The necessity condition above is far from sufficient however. The
class of fragmentable spaces is very large and includes, for
instance, all scattered spaces. Recall that a topological space is
{\em scattered} if every non-empty subspace admits a relatively
isolated point. In the year before \cite{mercourakis:87} appeared,
Talagrand showed that the space $\dual{C(\wone+1)}$ does not admit a
strictly convex dual norm \cite[Th\'eor\`eme 3]{tal:86}, where
$\wone$ is the first uncountable ordinal considered in its
(scattered) order topology. On the other hand, the dual unit ball
$(\ball{\dual{C(\wone+1)}},\weakstar)$ is fragmentable \cite[Theorem
3.1]{ribarska:87}.

The next significant sufficiency condition we mention requires a
definition.
\begin{defn}\label{descriptive}
A compact space $K$ is {\em descriptive} if it admits a {\em
$\sigma$-isolated network}, that is to say, a family $\mathscr{N} =
\bigcup_{n=1}^\infty \mathscr{N}_n$ of subsets of $K$, satisfying
\begin{enumerate}
\item $N \cap \closure{\bigcup\mathscr{N}_n\setminus\{N\}}$ is empty
whenever $N \in \mathscr{N}_n$ and $n \in \nat$, and
\item if $x \in U \subseteq K$, where $U$ is open, then there exists
$n \in \nat$ and $N \in \mathscr{N}_n$ such that $x \in N \subseteq
U$.
\end{enumerate}
\end{defn}
This topological covering property arose out of the theory of
`generalised metric spaces' \cite{gru:84}. The class of descriptive
compact spaces is large. For example, if $X$ is WCD then
$(\ball{\dual{X}},\weakstar)$ is descriptive (\cite[Th\'eor\`eme
3.6]{tal:79} and \cite[Corollary 2.4]{raja:03}). In \cite[Theorem
3.3]{raja:03}, Raja showed that if $K$ is {\em descriptive} then
$\dual{C(K)}$ admits a strictly convex dual norm. This result can be
adapted to give a sufficient condition which applies to a wide class
of dual Banach spaces \cite[Theorem 1.3]{or:04}, including duals of
WCD spaces. We remark that a compact scattered space $K$ is descriptive
if and only if it is {\em $\sigma$-discrete}, that is, $K =
\bigcup_{n=1} D_n$, where each $D_n$ is discrete in its relative topology.
This fact follows from \cite[Lemma 2.2]{raja:03}.

Despite these advances, there is a very large gap between the class
of descriptive spaces and $\wone+1$ and the more general class of
fragmentable spaces. Some years prior to the publication of
\cite{raja:03}, Haydon constructed some strictly convex dual norms
on spaces of the form $\dual{C(K)}$, where the $K$ are 1-point
compactifications of certain trees in their interval topologies
\cite[Theorem 7.1]{haydon:99}. It turns out that some of these
spaces are not descriptive, so Haydon's sufficient condition is not
covered by Raja's umbrella.

In \cite[Theorem 6]{smith:06}, the second-named author generalised
Haydon's result by characterising those trees for which the
associated spaces $\dual{C(K)}$ admit strictly convex dual norms.
Later, in \cite{smith:09}, this order-theoretic characterisation was
reproved in internal, topological terms. To state this result we
need another definition.
\begin{defn}\label{gruenhage}
A compact space $K$ is called {\em Gruenhage} if there
exists a sequence $(\mathscr{U}_n)_{n=1}^\infty$ of families of open subsets of $K$,
and sets $R_n$, $n \geq 1$, with the property that
\begin{enumerate}
\item if $x,y \in K$ are distinct, then there exists $n \in \nat$ and $U \in \mathscr{U}_n$, such that $\{x,y\} \cap U$ is a singleton, and
\item $U \cap V = R_n$ whenever $U,V \in \mathscr{U}_n$ are distinct.
\end{enumerate}
\end{defn}
This definition is equivalent to the original \cite[p.\ 372]{gru:87}
(see \cite[Proposition 2]{smith:09}). Every descriptive compact
space is Gruenhage \cite[Corollary 4]{smith:09}.

\begin{thm}[{\cite[Theorems 7 and 16]{smith:09}}]\label{smithgru}
Let $K$ be compact. Then the following statements hold.
\begin{enumerate}
\item If $K$ is Gruenhage then $\dual{C(K)}$ admits a strictly convex
dual lattice norm.
\item If $K$ is the 1-point compactification of a tree and $\dual{C(K)}$
admits a strictly convex dual norm, then $K$ is Gruenhage.
\end{enumerate}
\end{thm}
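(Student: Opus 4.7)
For part (1), the goal is to build a $\weakstar$-lower semicontinuous equivalent lattice norm $\nu$ on $\dual{C(K)} = M(K)$ that is strictly convex, directly from the Gruenhage data $(\mathscr{U}_n, R_n)_n$. The basic building blocks are the functionals
\begin{equation*}
p_W(\mu) = |\mu|(W) = \sup\bigl\{|\mu(f)| : f \in C(K),\ |f|\leq 1,\ \mathrm{supp}(f) \subseteq W\bigr\}
\end{equation*}
for $W \subseteq K$ open, which are $\weakstar$-lsc lattice seminorms bounded by $\normdot$. The structural feature of the Gruenhage families to be exploited is that, since $U \cap V = R_n$ for distinct $U, V \in \mathscr{U}_n$, the pieces $U \setminus R_n$ are pairwise disjoint, so the values $|\mu|(U \setminus R_n)$ form an $\ell^1$-summable family bounded by $\norm{\mu}$. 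I would set $\nu(\mu)^2$ equal to $\norm{\mu}^2$ plus a suitably weighted sum of squares of the $p_U$ and $p_{R_n}$, calibrating the coefficients so that the total is $\weakstar$-lsc and comparable to $\norm{\mu}^2$: this convergence is possible even when each $\mathscr{U}_n$ is uncountable, since for any fixed $\mu$ only countably many $U \in \mathscr{U}_n$ can contribute meaningfully beyond the uniform contribution coming from $|\mu|(R_n)$.

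Strict convexity would then be verified as follows. If $\nu(\mu) = \nu(\mu') = \nu(\tfrac12(\mu+\mu'))$, then by strict convexity of $t \mapsto t^2$ each seminorm appearing in the sum is affine on the segment $[\mu,\mu']$; in particular $|\mu + \mu'|(W) = |\mu|(W) + |\mu'|(W)$ for every $W$ arising in the construction, and $\norm{\mu+\mu'} = \norm{\mu} + \norm{\mu'}$. Passing to Jordan decompositions $\mu = \mu^+ - \mu^-$ and $\mu' = (\mu')^+ - (\mu')^-$, this additivity rules out sign cancellation on each open set $U$ and forces the positive and negative parts to be ``compatible'' there; combined with the point-separation property (Definition~\ref{gruenhage}(1)) applied to the support of $\mu - \mu'$, these constraints should propagate from the open sets $U$ to arbitrary Borel subsets of $K$ and yield $\mu = \mu'$. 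The principal obstacle, as ever in this subject, is maintaining $\weakstar$-lower semicontinuity while packing enough information into $\nu$ to distinguish every pair of distinct measures: the natural candidate $|\mu|(U \setminus R_n) = |\mu|(U) - |\mu|(R_n)$ is a difference of $\weakstar$-lsc functions rather than $\weakstar$-lsc itself, and so has to enter the construction only through a combination (e.g.\ inside $p_U^2$ balanced against a suitable multiple of $p_{R_n}^2$) that keeps the overall form $\weakstar$-lsc.

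For part (2), the plan is to invoke the order-theoretic characterisation of \cite[Theorem 6]{smith:06}, which reformulates the existence of a strictly convex dual norm on $\dual{C(T\cup\{\infty\})}$ as a combinatorial condition on the tree $T$. From such a decomposition I would assemble a Gruenhage sequence on $K = T \cup \{\infty\}$ level by level, each $\mathscr{U}_n$ consisting of open cones of the form $(t_0, t]$ in the interval topology drawn from a single layer of the decomposition, and each $R_n$ being the corresponding common intersection of distinct members (typically $\emptyset$ or $\{\infty\}$). The work is largely combinatorial; the main difficulty is to calibrate the layers so that the fixed-intersection condition of Definition~\ref{gruenhage}(2) holds on the nose across every family, while separation (1) is secured by letting $n$ sweep through the decomposition so that every pair of distinct points is caught by some $U$ at some level.
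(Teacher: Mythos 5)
The statement is quoted from \cite{smith:09}; the paper reproves only part (1), as a corollary of its Theorem \ref{merc-suff}, by taking $\Theta_n(\mu)(U)=|\mu|(U)$ for $U\in\mathscr{U}_n$ and feeding these into a Day-type norm. Your building blocks $|\mu|(U)$ are the same, but the way you assemble them has a genuine gap. A weighted sum $\sum_{U\in\mathscr{U}_n}c_U\,|\mu|(U)^2$ with coefficients fixed in advance cannot work when $\mathscr{U}_n$ is uncountable: finiteness of the sum for any $\mu$ with $|\mu|(R_n)>0$ forces $\sum_U c_U<\infty$, hence all but countably many $c_U$ vanish, and the resulting norm carries no information about the omitted sets $U$ --- which may be exactly the ones needed to separate a given pair of measures. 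Your justification that only countably many $U$ ``contribute meaningfully'' is true for each \emph{fixed} $\mu$ (the sets $U\setminus R_n$ are pairwise disjoint, so $\sum_U|\mu|(U\setminus R_n)\leq\norm{\mu}$), but that countable subfamily depends on $\mu$ and cannot be encoded in coefficients chosen beforehand. The missing device is Day's symmetrisation: the seminorms $\psi_{m,n}(\mu)=\sup\setcomp{\sum_{U\in\mathscr{F}}|\mu|(U)^2}{\mathscr{F}\subseteq\mathscr{U}_n,\ \card{\mathscr{F}}=m}$ are $\weakstar$-lower semicontinuous and convex as suprema of such functions, and the Gruenhage intersection condition yields the key fact that for $r>\tau(R_n)$, where $\tau=\frac12(\mu+\nu)$, only finitely many $V\in\mathscr{U}_n$ satisfy $\tau(V)\geq r$; this is what makes the sup behave like a finite sum near the midpoint and gives the strict inequality. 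It also disposes of the lower-semicontinuity problem with $|\mu|(U\setminus R_n)$, which your sketch flags but does not resolve.

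Two further points on (1). Separating distinct measures by some $|\mu|(U)\neq|\nu|(U)$ does \emph{not} follow from the point-separation clause of Definition \ref{gruenhage}: one must first pass to a refined Gruenhage sequence (cf.\ \cite[Lemma 6]{smith:09}) with the extra property that $\mu(U)=0$ for all $U$ in all families forces $\mu=0$; your appeal to the support of $\mu-\mu'$ and a ``propagation to Borel sets'' is not an argument. And the strict-convexity check should be run on positive measures only and then extended by the standard lattice argument of Proposition \ref{conv-charac}; handling signed measures directly through Jordan decompositions and ``compatibility'' is where your verification would stall. For part (2) the paper gives no proof (it is simply \cite[Theorem 16]{smith:09}); your plan of routing through the order-theoretic characterisation $T\preccurlyeq Y$ of \cite[Theorem 6]{smith:06} and assembling Gruenhage families from the layers is indeed the route taken in the literature, though all of the substance lies in the combinatorial step you leave unproved.
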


Theorem \ref{smithgru} (1) can be adapted to give a sufficient
condition (\cite[Corollary 10]{smith:09}) which applies to class of
dual Banach spaces even wider than that covered by \cite[Theorem
1.3]{or:04}. There are other instances of necessity besides Theorem
\ref{smithgru} (2). For instance, if the Banach space $X$ has an
(uncountable) unconditional basis then $\dual{X}$ admits a strictly
convex dual norm if and only if $(\ball{\dual{X}},\weakstar)$ is
Gruenhage (equivalently, if $(\ball{\dual{X}},\weakstar)$ is
descriptive) \cite[Theorem 6]{st:09}. Despite some courageous
attempts, it was not possible to prove the converse implication of
Theorem \ref{smithgru} (1). Many of the results of this paper are
the product of efforts to resolve this difficulty.

This paper is organised as follows. In Section \ref{gensc}, we
introduce a generalisation of Gruenhage's property, labeled {\st}
(Definition \ref{star}), and use it to give a characterisation of
Banach spaces which admit a strictly convex norm satisfying some
additional lower semicontinuity property (Theorem \ref{sccharac}).
This characterisation attempts to topologise as much as possible the
geometric condition of strict convexity. In Section \ref{norm}, we
use {\st} to find an analogue of Theorem \ref{smithgru} which
applies to all scattered compact spaces (Theorem \ref{scattered}).
This class is significant in Banach space theory because $C(K)$ is
an {\em Asplund} space if and only if $K$ is scattered. In doing so,
we show that {\st} comes close to providing a complete topological
characterisation of those $K$, for which $\dual{C(K)}$ admits a
strictly convex dual norm. In Section \ref{topology}, we establish
some of the topological properties of {\st} and its position in the
wider context of covering properties, and provide some examples of
scattered compact spaces, some of which having {\st} and others not.
In particular, we give an example of a scattered non-Gruenhage
compact space having {\st} (Example \ref{starnon-gru}). Thus,
Theorem \ref{scattered} does not follow from previous results such
as Theorem \ref{smithgru}. Along the way, we answer an open question
concerning Kunen's compact space: specifically, we show that it is
Gruenhage (Proposition \ref{kunen-gru}). In several cases, including
Example \ref{starnon-gru}, we shall assume extra principles
independent of the usual axioms of set theory. Finally, in Section
\ref{problems}, we present some open problems stemming from this
study.

\section{A characterisation of strict convexity in Banach spaces}\label{gensc}

In this section, we provide a general characterisation of strictly
convex renormings in Banach spaces. Throughout this section, $X$
will be a Banach space (and occasionally a general topological
space) and $F \subseteq \dual{X}$ a norming subspace. Recall that
$\sig{X}{F}$ denotes the coarsest topology on $X$ with respect to
which every element of $F$ is continuous. We begin by presenting a
useful folklore result, together with a brief sketch proof.

\begin{prop}\label{conv-charac}
Let $F \subseteq \dual{X}$ be a norming subspace. Suppose that there
exists a sequence of $\sig{X}{F}$-lower semicontinuous convex
functions $\mapping{\varphi_n}{X}{[0,\infty)}$ such that given
distinct $x,y \in X$, we can find $n \in \nat$ satisfying
\begin{equation*}\tag{1}\label{convsc}
{\textstyle \varphi_n(\frac{1}{2}(x+y))
< \max\{\varphi(x),\varphi(y)\}.}
\end{equation*}
Then $X$ admits a $\sig{X}{F}$-lower semicontinuous
strictly convex norm $\trinormdot$. Instead, if $X$ is a Banach
lattice, (\ref{convsc}) holds whenever $x,y \in X_+$ are distinct,
and
\[
\varphi_n(x) \leq \varphi_n(y)
\]
whenever $|x| \leq |y|$ and $n \in \nat$, then $\trinormdot$ is a
$\sig{X}{F}$-lower semicontinuous strictly convex lattice norm.
\end{prop}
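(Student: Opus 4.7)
The plan is to construct $\trinormdot$ as the Minkowski functional of a sublevel set of an auxiliary convex function obtained by quadratic averaging of the $\varphi_n$.

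First, every finite-valued convex lsc function on a Banach space is continuous and hence bounded on bounded sets. Applying a diagonal rescaling $\varphi_n \mapsto \alpha_n \varphi_n$ with suitably small $\alpha_n > 0$ (which preserves hypothesis \eqref{convsc}), one may arrange that
\[
\Psi(x) := \norm{x}^2 + \sum_{n=1}^{\infty} \bigl(\varphi_n(x)^2 + \varphi_n(-x)^2\bigr)
\]
is finite at every $x \in X$. Each summand is convex (since $t\mapsto t^2$ is convex non-decreasing on $[0,\infty)$ and each $\varphi_n$ is non-negative convex) and $\sig{X}{F}$-lsc; being the pointwise supremum of its partial sums, $\Psi$ is thus convex, $\sig{X}{F}$-lsc, non-negative, even, and satisfies $\Psi \geq \norm{\cdot}^2$.

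The key step is a strict midpoint inequality: for distinct $x,y \in X$, $\Psi(\tfrac{1}{2}(x+y)) < \tfrac{1}{2}(\Psi(x)+\Psi(y))$. By \eqref{convsc} there exists $n_0$ with $\varphi_{n_0}(\tfrac{1}{2}(x+y)) < \max\{\varphi_{n_0}(x),\varphi_{n_0}(y)\}$. Combining this with midpoint convexity of $\varphi_{n_0}$ and the strict convexity of $t\mapsto t^2$ on $[0,\infty)$, a short case split (according to whether $\varphi_{n_0}(x) = \varphi_{n_0}(y)$) yields $\varphi_{n_0}(\tfrac{1}{2}(x+y))^2 < \tfrac{1}{2}(\varphi_{n_0}(x)^2+\varphi_{n_0}(y)^2)$. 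All remaining summands obey the corresponding non-strict inequality, so the strict inequality persists in the sum.

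Now set $C := \{x \in X : \Psi(x) \leq 1\}$ and let $\trinormdot$ be its Minkowski functional. Then $C$ is symmetric, convex, $\sig{X}{F}$-closed (by lsc of $\Psi$), and contains a norm-ball about $0$ (since $\Psi$ is norm-continuous at $0$ with $\Psi(0) \geq 0$ and grows at most continuously there), so $\trinormdot$ is an equivalent $\sig{X}{F}$-lsc norm. If $\trinorm{x} = \trinorm{y} = \trinorm{\tfrac{1}{2}(x+y)} = 1$ with $x \neq y$, then continuity of $\Psi$ forces $\Psi(x) = \Psi(y) = \Psi(\tfrac{1}{2}(x+y)) = 1$, contradicting the strict midpoint inequality. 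Hence $\trinormdot$ is strictly convex.

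For the lattice statement, monotonicity of $\varphi_n$ in $|\cdot|$ forces $\varphi_n(x) = \varphi_n(|x|)$, so the $\varphi_n(-\cdot)$ terms in $\Psi$ are redundant and $\Psi$ itself becomes monotone with respect to the lattice order; consequently $\trinormdot$ is a lattice norm. For strict convexity, the inequality $|\tfrac{1}{2}(x+y)| \leq \tfrac{1}{2}(|x|+|y|)$ combined with monotonicity of $\trinormdot$ reduces the argument to pairs of distinct positive elements, where hypothesis \eqref{convsc} applies directly. I anticipate the residual case $|x|=|y|$ with $x\neq y$ to be the main obstacle: reaching a contradiction there will likely require invoking the strict midpoint inequality on the positive cone at $|x|, |\tfrac{1}{2}(x+y)|$, together with extra lattice manipulation.
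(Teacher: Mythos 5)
Your strategy (quadratic averaging of the $\varphi_n$, then a Minkowski functional) is close in spirit to the paper's, but it has a genuine gap at the first step. The claim that a finite-valued convex lower semicontinuous function on a Banach space is ``continuous and hence bounded on bounded sets'' is false in infinite dimensions: such a function is indeed norm-continuous, hence locally bounded, but need not be bounded on any fixed ball. For instance, $f(x)=\sum_{n}(2|x_n|)^{n}$ is convex, finite-valued and $\sig{\czero}{\lp{1}}$-lower semicontinuous on $\czero$, yet $f(e_n)=2^n$ for the coordinate vectors $e_n$ of the unit sphere. Taking admissible families containing functions of the form $f(2^nx)$, whose region of unboundedness shrinks towards the origin, one sees that no diagonal rescaling makes $\Psi$ finite everywhere, let alone bounded near $0$; and even where $\Psi$ is finite it is only a pointwise supremum of continuous functions, hence lower semicontinuous but not necessarily continuous. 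This breaks several steps at once: $C=\{\Psi\le 1\}$ may fail to contain a norm ball about $0$, so $\trinormdot$ need not be an equivalent norm, and the implication $\trinorm{x}=1\Rightarrow\Psi(x)=1$ uses the unavailable continuity of $\Psi$. The paper's proof avoids exactly this trap: it sums squares not of the $\varphi_n$ but of the Minkowski functionals of the sublevel sets $C_{n,q}=\setcomp{x}{\varphi_n(x)^2+\varphi_n(-x)^2\le q}$ over all rationals $q>2\varphi_n(0)^2$. Each such set contains a ball about the origin (by continuity of $\varphi_n$ at $0$), so its gauge is dominated by a multiple of $\normdot$ and the weighted sum converges uniformly on bounded sets; ranging over all rational levels $q$ is what lets one recover $\varphi_n(x)=\varphi_n(y)=\varphi_n(\frac{1}{2}(x+y))$ from equality of the norms. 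To rescue your construction you would have to pass to these gauges, or add the unstated hypothesis that each $\varphi_n$ is bounded on bounded sets.

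The lattice half is also incomplete as written: you correctly reduce to positive elements via $\varphi_n(x)=\varphi_n(|x|)$ and monotonicity, but you explicitly leave open the case $|x|=|y|$ with $x\ne y$, which is where the work lies. The paper's route is: from $\trinorm{x}=\trinorm{y}=\frac{1}{2}\trinorm{x+y}$ and the lattice property one gets $\frac{1}{2}\trinorm{\,|x|+|y|\,}=\trinorm{x}$, so strict convexity on $X_+$ yields $|x|=|y|$; repeating the argument with $w=\frac{1}{2}(x+y)$ yields $|x|=|w|$, and a short lattice computation (see \cite[p.\ 749]{smith:09}) then forces $x=y$. You should carry out this final step rather than merely anticipating it.
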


\begin{proof}
Let $\normdot$ denote the original norm on $X$. We define a new norm
by
\[
\trinorm{x}^2 = \sum_{n,q} c_{n,q} \pnorm{x}{n,q}^2
\]
where $\pnormdot{n,q}$ is the Minkowski functional of
\[
C_{n,q} = \setcomp{x \in X}{\varphi_n(x)^2 + \varphi_n(-x)^2 \leq q}
\]
whenever $q$ is a rational number satisfying $q > 2\varphi_n(0)^2$,
and where the constants $c_{n,q} > 0$ are chosen to ensure the
uniform convergence of the sum on bounded sets. By a standard
convexity argument (cf.\ \cite[Fact II.2.3]{dgz:93}), it can be
shown that if $\trinorm{x} = \trinorm{y} = \frac{1}{2}\trinorm{x+y}$
then $\varphi_n(x) = \varphi_n(y) = \varphi_n(\frac{1}{2}(x+y))$ for
all $n$, whence $x=y$ by hypothesis. If we adopt the lattice
hypotheses instead then clearly $\trinormdot$ is also a lattice
norm, and strictly convex on $X_+$. To see that the strict convexity
extends to all of $X$, let $x,y \in X$ and suppose that $\trinorm{x}
= \trinorm{y} = \frac{1}{2}\trinorm{x+y}$. Then
$\frac{1}{2}\trinorm{\,|x| + |y|\,} = \trinorm{x}$ as well, so
strict convexity on $X_+$ yields $|x| = |y|$. If we set $w =
\frac{1}{2}(x+y)$ then repeating the above gives us $|x| = |w|$. A
simple lattice argument (e.g.\ \cite[p.\ 749]{smith:09}) leads us to
conclude that $x=y$.
\end{proof}

Our characterisation adopts several ideas from \cite{ot:09,ot:09a}.
Recall that if $A$ is a subset of a locally convex space then an
open {\em slice} $U$ of $A$ is the intersection of $A$ with an open
half-space of $X$. The following proposition will be our main tool.

\begin{prop}\label{scslices}
Let $A$ be a bounded subset of $X$ and $\mathscr{U}$ a family of
non-empty $\sig{X}{F}$-open slices of $A$. Then there exists a
$\sig{X}{F}$-lower semicontinuous 1-Lipschitz convex function
$\varphi$ with the property that whenever $x,y \in A$, $\{x,y\} \cap
\bigcup\mathscr{U}$ is non-empty and
\[
\varphi(x) \;=\; \varphi(y) \;=\; {\textstyle
\varphi{(\frac{1}{2}(x+y))}},
\]
we have $x,y \in U$ for some $U \in \mathscr{U}$.
\end{prop}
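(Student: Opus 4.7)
The plan is to build $\varphi$ explicitly as a supremum of convex ``slice indicators'', in the spirit of \cite{ot:09,ot:09a}. For each non-empty $U \in \mathscr{U}$ I would write $U = \setcomp{x \in A}{f_U(x) > c_U}$ with $f_U \in F$ and, after rescaling by $\norm{f_U}$, assume $\norm{f_U} \leq 1$. Let $M = \sup_{z \in A} \norm{z}$. If $c_U < -M$, then $f_U(x) \geq -\norm{x} \geq -M > c_U$ for every $x \in A$, so $U = A$ and the desired conclusion ``$x,y \in U$'' is automatic; such slices may be discarded from $\mathscr{U}$. For every remaining slice $c_U \geq -M$, and I set
\[
\varphi(x) \;=\; \sup_{U \in \mathscr{U}} (f_U(x) - c_U)^+,
\]
where $(t)^+ = \max\{t,0\}$. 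The crude bound $(f_U(x) - c_U)^+ \leq \norm{x} + M$ shows $\varphi$ is finite-valued on all of $X$, and as a supremum of convex, $1$-Lipschitz, $\sig{X}{F}$-lower semicontinuous functions, $\varphi$ inherits each of these three properties.

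Next I would verify the separation property. Let $x,y \in A$ satisfy $\{x,y\} \cap \bigcup \mathscr{U} \neq \emptyset$, and write $t = \varphi(x) = \varphi(y) = \varphi(\frac{1}{2}(x+y))$. Since some $U_0 \in \mathscr{U}$ contains one of $x,y$, at least one of $(f_{U_0}(x) - c_{U_0})^+$ and $(f_{U_0}(y) - c_{U_0})^+$ is strictly positive, so $t > 0$. Fix $\ep \in (0, t/2)$ and use the definition of supremum to choose $U_\ep \in \mathscr{U}$ with $(f_{U_\ep}(\frac{1}{2}(x+y)) - c_{U_\ep})^+ > t - \ep$. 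From $\varphi(x) = \varphi(y) = t$ we read off $f_{U_\ep}(x) - c_{U_\ep} \leq t$ and $f_{U_\ep}(y) - c_{U_\ep} \leq t$, and linearity of $f_{U_\ep}$ yields
\[
(f_{U_\ep}(x) - c_{U_\ep}) + (f_{U_\ep}(y) - c_{U_\ep}) \;>\; 2(t - \ep),
\]
forcing each summand to exceed $2(t-\ep) - t = t - 2\ep > 0$. Hence $x, y \in U_\ep$.

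The only real obstacle I anticipate is the normalization bookkeeping: arranging the data $(f_U, c_U)$ so that $\varphi$ is finite and $1$-Lipschitz on the whole of $X$, rather than only on $A$. Once the reductions $\norm{f_U} \leq 1$ and $c_U \geq -M$ (plus the trivial discard when $U = A$) are in place, convexity and $\sig{X}{F}$-lower semicontinuity come for free from the analogous properties of each $(f_U - c_U)^+$, and the key inequality above is a direct consequence of the definition of supremum combined with the identity $f_{U_\ep}(\frac{1}{2}(x+y)) = \frac{1}{2}(f_{U_\ep}(x) + f_{U_\ep}(y))$.
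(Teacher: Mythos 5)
Your proof is correct, and while it shares the overall skeleton of the paper's argument---attach a convex function to each slice, take the supremum, and then, under the three-point equality, pick a slice nearly attaining the supremum at the midpoint and force both endpoints into it---the building blocks are genuinely different. The paper takes $\varphi_U$ to be the $F$-distance from $x$ to $(\conv A)\setminus U$ (Definition \ref{Fdistance}), which needs Lemma \ref{Fdistanceprop}: a passage to the bidual for lower semicontinuity and a Hahn--Banach argument to identify the zero set; the final separation estimate is then run on the squares $\varphi_V^2$ because $\varphi_V$ is merely convex. You instead use the explicit excess functions $(f_U-c_U)^+$ of the normalised functionals defining the slices, and the linearity of $f_{U_\ep}$ replaces the paper's quadratic computation by the one-line inequality $(f_{U_\ep}(x)-c_{U_\ep})+(f_{U_\ep}(y)-c_{U_\ep})>2(t-\ep)$ with each summand bounded above by $t$, so each exceeds $t-2\ep>0$. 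This is more elementary and self-contained; what it gives up is the extra generality of the $F$-distance, which applies to arbitrary bounded convex sets and feeds into the Slice Localisation Theorem machinery, but none of that is needed for this proposition. Your reductions are legitimate: since $\|f_U\|\le 1$ and $A$ is bounded, $|c_U|\le M$ for every retained slice, giving finiteness and the $1$-Lipschitz property of the supremum on all of $X$, and any discarded slice satisfies $U=A$, so for such a slice the conclusion $x,y\in U$ holds for every pair and the proposition is vacuous (in particular, if the witness to $\{x,y\}\cap\bigcup\mathscr{U}\neq\varnothing$ happened to lie only in a discarded slice, one is already done, so the deduction $t>0$ may be made from a retained slice without loss).
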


Proposition \ref{scslices} is an immediate corollary of the next
result, dubbed the `Slice Localisation Theorem'.

\begin{thm}[{\cite[Theorem 3]{ot:09}}]\label{slice-localisation}
Let $A$ be a bounded subset of $X$ and $\mathscr{U}$ a family of
non-empty $\sig{X}{F}$-open slices of $A$. Then there is an
equivalent $\sigma(X,F)$-lower semicontinuous norm $\normdot$ such
that for every sequence $(x_n)_{n=1}^\infty \subseteq X$ and $x \in
A \cap \bigcup\mathscr{U}$, if
\[
2\norm{x}^2 + 2\norm{x_n}^2 - \norm{x+x_n}^2 \to 0,
\]
then there is a sequence of slices $(U_n)_{n=1}^\infty \subseteq
\mathscr{U}$ and $n_0 \in \nat$ such that
\begin{enumerate}
\item $x,x_n \in U_n$ whenever $n \geq n_0$ and $x_n \in A$;
\item for every $\delta > 0$ there is some $n_\delta \in \nat$ such
that
\[
x, x_n \in \closure{(\conv(A \cap{U_n})+\delta\ball{X})}^{\sigma(X,F)}
\]
for all $n \geq n_\delta$.
\end{enumerate}
\end{thm}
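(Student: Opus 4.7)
The plan is to build $\normdot$ as a countable sum of squares of Minkowski functionals of $\sig{X}{F}$-closed, bounded, absolutely convex bodies that encode the ``complements'' of the slices in $\mathscr{U}$, and to extract the slice-localisation conclusion from the per-component parallelogram-law convergence that the standard convexity lemma (cf.\ \cite[Fact II.2.3]{dgz:93}) supplies when applied to the sum-of-squares decomposition. The overall architecture mirrors that of Proposition~\ref{conv-charac}, but the building blocks are geometric (Minkowski functionals of slice-complements) rather than abstract convex functions.

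For the setup I would call the original norm $\pnormdot{0}$, normalise so that $A \subseteq \ball{X}$ in $\pnormdot{0}$, and write each $U \in \mathscr{U}$ as $U = \{a \in A : f_U(a) > 1\}$ by rescaling $f_U \in F$; the scalar $M_U := \sup_{a\in A}|f_U(a)|$ is then finite, and one may stratify $\mathscr{U}$ by rational upper bounds on $M_U$. For each pair of positive rationals $(q,\beta)$ with $\beta < 1 \le q$ I would form the ``shrunken complement''
\[
E_{q,\beta} \;=\; \bigcup_{U \in \mathscr{U},\ M_U \le q} \{a \in A : f_U(a) \le 1-\beta\},
\]
take $B_{q,\beta}$ to be the $\sig{X}{F}$-closed absolutely convex hull of $E_{q,\beta}$ slightly fattened by a multiple of $\ball{X}$ to ensure absorbency, and let $p_{q,\beta}$ be its Minkowski functional. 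Enumerating such pairs as $(q_k,\beta_k)$ and choosing positive summable weights $c_k$ that make the series converge uniformly on $\pnormdot{0}$-bounded sets, I would define
\[
\norm{x}^2 \;=\; \pnorm{x}{0}^2 + \sum_{k=1}^{\infty} c_k\, p_{q_k,\beta_k}(x)^2,
\]
which is an equivalent, $\sig{X}{F}$-lower semicontinuous norm on $X$.

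To verify the main conclusion, fix $x \in A \cap U$ with $U\in\mathscr{U}$ and choose rational $\beta\in(0,\,f_U(x)-1)$ together with rational $q\ge M_U$. Then $f_U\le 1-\beta$ throughout $E_{q,\beta}$ while $f_U(x)>1$, so $x$ is $\sig{X}{F}$-strictly separated from $B_{q,\beta}$ and $p_{q,\beta}(x)>1$. The standard convexity lemma applied to the sum-of-squares defining $\normdot^2$ forces
\[
2 p_{q,\beta}(x)^2 + 2 p_{q,\beta}(x_n)^2 - p_{q,\beta}(x+x_n)^2 \to 0,
\]
whence $p_{q,\beta}(x_n)\to p_{q,\beta}(x)$ and $p_{q,\beta}((x+x_n)/2)\to p_{q,\beta}(x)$. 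Geometrically, $x_n/p_{q,\beta}(x_n)$ and $x/p_{q,\beta}(x)$ approach a common boundary point of $B_{q,\beta}$; since $B_{q,\beta}$ is the $\sig{X}{F}$-closed convex hull of a \emph{union} of slice-complements, Milman-type arguments on extreme points let one represent this approach by sequences drawn from a \emph{single} slice-complement indexed by some $U_n\in\mathscr{U}$ with $x\in U_n$, giving~(a). For~(b), taking $\beta=\beta(\delta)$ sufficiently small shrinks the slice-complement used in the representation until its convex hull lies within $\pnormdot{0}$-distance $\delta$ of $\conv(A\cap U_n)$, producing the required $\sig{X}{F}$-closure containment.

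The main obstacle is precisely this extraction of a single slice $U_n\in\mathscr{U}$ from the aggregate data encoded by $p_{q,\beta}$, whose defining body $B_{q,\beta}$ mixes \emph{all} slice-complements of functional-norm bounded by $q$. This step requires treating $x$ not merely as a point outside $B_{q,\beta}$ but as a suitably strongly exposed point of $p_{q,\beta}(x)\cdot B_{q,\beta}$, so that any LUR-type approach to $x$ eventually concentrates its convex representation on a single slice-complement from which $x$ is separated. The quantitative exposing is produced by iterating the construction over $\beta\to 0$ and $q\to\infty$ along rationals, which is also what enables the parameter $\delta$ in~(b) to be made arbitrarily small.
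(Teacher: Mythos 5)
The paper does not actually prove Theorem~\ref{slice-localisation}: it is imported verbatim from \cite[Theorem 3]{ot:09}, whose proof goes through Deville's Master Lemma. The only internal argument you can compare against is the paper's direct proof of the weaker Proposition~\ref{scslices}, which works with the supremum $\varphi=\sup_{U}\varphi_U$ of \emph{individual} $F$-distance functions to $(\conv A)\setminus U$ and then uses an $\varepsilon$-almost-attainment argument to pin down a single slice $V$. That separation of the slices, one convex function per $U$ combined only at the level of a supremum, is exactly what your construction destroys.

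The fatal step is the separation claim. You set $E_{q,\beta}=\bigcup_{U}\{a\in A: f_U(a)\le 1-\beta\}$, a union over \emph{all} slices with $M_U\le q$, and then assert that for $x\in A\cap U$ one has $f_U\le 1-\beta$ throughout $E_{q,\beta}$, hence $p_{q,\beta}(x)>1$. This is false: for a slice $V\ne U$ there is no reason why $f_U\le 1-\beta$ on $\{a:f_V(a)\le 1-\beta\}$, and generically $x$ itself lies in the shrunken complement of some other slice $V\in\mathscr{U}$, so $x\in E_{q,\beta}\subseteq B_{q,\beta}$ and $p_{q,\beta}(x)\le 1$. Once the bodies $B_{q,\beta}$ mix all slice-complements, the per-component parallelogram convergence carries no slice-specific information, and the subsequent ``Milman-type'' extraction of a single $U_n$ is not an argument ($x$ need not be an extreme, let alone strongly exposed, point of $p_{q,\beta}(x)B_{q,\beta}$, and nothing in the construction makes it one). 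Finally, conclusion (2) of the theorem asserts weak closeness of $x,x_n$ to $\conv(A\cap U_n)$, the convex hull of the slice itself, whereas your bodies encode only the complements; letting $\beta\to 0$ shrinks the complements toward $(\conv A)\setminus U$, not toward $\conv(A\cap U)$, so (b) is not addressed at all. A correct route must keep one convex functional per slice (e.g.\ the $F$-distance to $\conv((A\setminus U)\cup$ a suitable shrinking of $A)$, as in \cite{ot:09,ot:09a}), combine them by a supremum or by Deville's decomposition, and derive the localisation from an almost-attainment estimate of the type displayed in the proof of Proposition~\ref{scslices}.
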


The Slice Localisation Theorem can be used to simplify the proofs of
network characterisations of Banach spaces which admit locally
uniformly rotund norms. To prove Proposition \ref{scslices}, all we
need to do apply Theorem \ref{slice-localisation} with $x_n = y$ for
all $n$. However, there is a more transparent proof of this
proposition which we provide for completeness.

Of key importance to the proof is the concept of $F$-distance,
introduced in \cite{ot:09a}. Let  $D \subseteq X$ be a non-empty, convex
bounded subset. Given $\xi \in \bidual{X}$, define
\begin{equation}\label{Fbidual}
\pnorm{\xi}{F} \;=\; \sup\setcomp{\xi(f)}{f \in \ball{\dual{X}} \cap
F}.
\end{equation}
It is clear that $\pnormdot{F}$ is $\sig{\bidual{X}}{F}$-lower
semicontinuous ($\sig{\bidual{X}}{F}$ being the only generally
non-Hausdorff topology mentioned in this paper). Now set
\[
\varphi(x) \;=\; \inf\setcomp{\pnorm{x-d}{F}}{d \in
\closure{D}^{\sig{\bidual{X}}{\dual{X}}}}.
\]

\begin{defn}\label{Fdistance}
Given a non-empty, convex bounded subset $D \subseteq X$, we
call $\varphi(x)$ the {\em $F$-distance} from $x \in X$ to $D$.
\end{defn}

We pass to the bidual of $X$ in order to control the lower
semicontinuity properties of $\varphi$. The notion of $F$-distance
has a number of useful properties which we list in the next lemma.

\begin{lem}\label{Fdistanceprop}
Let $\varphi(x)$ be the {\em $F$-distance} from $x \in X$ to $D$.
\begin{enumerate}
\item $\varphi$ is convex and 1-Lipschitz;
\item $\varphi$ is $\sig{X}{F}$-lower semicontinuous;
\item $\closure{D}^{\sig{X}{F}} = \varphi^{-1}(0)$.
\end{enumerate}
\end{lem}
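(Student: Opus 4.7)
Property (1) is routine: convexity of $\varphi$ follows from convexity of $D$ (hence of $\closure{D}^{\sig{\bidual{X}}{\dual{X}}}$) together with the fact that $\pnormdot{F}$ is a seminorm on $\bidual{X}$, and the $1$-Lipschitz bound $|\varphi(x)-\varphi(y)| \leq \norm{x-y}$ follows from $\pnorm{\cdot}{F} \leq \normdot$ and the triangle inequality applied inside the infimum over $d$.

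The heart of the proof is the dual representation
\[
\varphi(x) \;=\; \sup\bigl\{ f(x) - \sup_{d \in D} f(d) : f \in \ball{\dual{X}} \cap F \bigr\}.
\]
The inequality $(\geq)$ is immediate, since $f(x-d) \leq \pnorm{x-d}{F}$ for every admissible $f$ and $d$. For $(\leq)$, I would write $\varphi(x) = \inf_{d \in C}\sup_{f \in B}[f(x)-d(f)]$, where $C = \closure{D}^{\sig{\bidual{X}}{\dual{X}}}$ and $B = \ball{\dual{X}} \cap F$, and apply Sion's minimax theorem. The required hypotheses all hold: $C$ is convex, and $\sig{\bidual{X}}{\dual{X}}$-compact by Banach--Alaoglu since $D$ is bounded; $B$ is convex; and $(d,f) \mapsto f(x)-d(f)$ is affine in each variable and $\sig{\bidual{X}}{\dual{X}}$-continuous in $d$ for each fixed $f \in \dual{X}$. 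After swapping, the inner infimum is $f(x) - \sup_{d \in C}d(f)$, and $\sig{\bidual{X}}{\dual{X}}$-density of $D$ in $C$ replaces $\sup_{d \in C} d(f)$ by $\sup_{d \in D} f(d)$, yielding the displayed formula.

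From this formula, (2) follows immediately: $\varphi$ is a pointwise supremum of $\sig{X}{F}$-continuous affine functions, hence is $\sig{X}{F}$-lower semicontinuous. For (3), the inclusion $\closure{D}^{\sig{X}{F}} \subseteq \varphi^{-1}(0)$ holds because $\varphi$ vanishes on $D$ (take $d=x$) and is $\sig{X}{F}$-lsc. For the reverse inclusion I would contrapose: if $x \notin \closure{D}^{\sig{X}{F}}$, then, $D$ being convex, Hahn--Banach applied in the locally convex space $(X,\sig{X}{F})$ -- whose continuous dual is precisely $F$ -- produces $f \in F$ with $f(x) > \sup_{d \in D} f(d)$; rescaling so that $f \in \ball{\dual{X}} \cap F$, the dual formula forces $\varphi(x) > 0$.

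The main technical step is the verification of the hypotheses for the minimax swap, in particular the interplay between the weak-$*$ compactness of the bidual closure $C$ and the weak-$*$ continuity of the evaluation in the $d$-variable. Once the dual representation is in hand, both (2) and (3) fall out with essentially no further work.
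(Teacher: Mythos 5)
Your argument is correct, and it is worth separating the two halves. For part (3) you do exactly what the paper intends: the text describes (3) as ``a straightforward exercise involving the Hahn--Banach separation theorem'', and your separation-plus-rescaling argument (using that the dual of $(X,\sig{X}{F})$ is $F$ and that $\closure{D}^{\sig{X}{F}}$ is convex) is precisely that exercise. For parts (1)--(2) the paper gives no proof at all, deferring to \cite[Proposition 2.1]{ot:09a}; the proof there of lower semicontinuity is a direct compactness argument rather than a minimax one: given a net $x_\alpha\to x$ in $\sig{X}{F}$ with $\varphi(x_\alpha)\le c$, one picks near-optimal $d_\alpha$ in $C=\closure{D}^{\sig{\bidual{X}}{\dual{X}}}$, uses the $\weakstar$-compactness of $C$ (here is where boundedness of $D$ and the passage to the bidual are used) to extract a $\weakstar$-convergent subnet $d_\alpha\to d\in C$, and concludes $\pnorm{x-d}{F}\le c$ from the $\sig{\bidual{X}}{F}$-lower semicontinuity of $\pnormdot{F}$. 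Your route instead establishes a Fenchel-type dual representation of $\varphi$ as a supremum of $\sig{X}{F}$-continuous affine functions via Sion's minimax theorem, from which (2) and the nontrivial inclusion in (3) both fall out of the same formula. Your verification of the minimax hypotheses is sound: $C$ is $\weakstar$-compact convex, the pairing is affine and appropriately semicontinuous in each variable separately, and $\sup_{d\in C}d(f)=\sup_{d\in D}f(d)$ by $\weakstar$-density. This is heavier machinery than strictly necessary --- the direct net argument avoids minimax entirely --- but it is a clean, self-contained alternative whose payoff is that the lower semicontinuity and the identification of $\varphi^{-1}(0)$ are unified through a single explicit dual formula.
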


Properties (1) and (2) are proved in \cite[Proposition 2.1]{ot:09a}
and the third is a straightforward exercise involving the
Hahn-Banach separation theorem. Now we can give our alternative
proof of Proposition \ref{scslices}.

\begin{proof}[Proof of Proposition \ref{scslices}]
For each $U \in \mathscr{U}$ and $x \in X$, define $\varphi_U(x)$ to
be the $F$-distance from $x$ to $(\conv A)\setminus U$. Since $A$ is
bounded, we can define another convex, $\sig{X}{F}$-lower
semicontinuous, 1-Lipschitz function by
\[
\varphi(x) \;=\; \sup\setcomp{\varphi_U(x)}{U \in \mathscr{U}}.
\]
Let $x,y \in A$ with $\{x,y\} \cap \bigcup\mathscr{U}$ non-empty and
suppose that
\[
{\textstyle \varphi(x) = \varphi(y) = \varphi(\frac{1}{2}(x+y))}.
\]
Without loss of generality, we can assume that $x \in U$ for some
$U \in \mathscr{U}$. Since $U \cap \closure{(\conv A)\setminus
U}^{\sig{X}{F}}$ is empty, we have $\varphi(x) \geq \varphi_U(x)
> 0$ by Lemma \ref{Fdistanceprop}, part (3). Pick $\varepsilon
> 0$ such that $\varphi(x) > 5\varepsilon^2$ and choose $V \in
\mathscr{U}$ with the property that
\[
{\textstyle \varphi(\frac{1}{2}(x+y))^2 <
\varphi_V(\frac{1}{2}(x+y))^2 + \varepsilon^2}.
\]
We have
\begin{eqnarray*}
0 &=& {\textstyle \frac{1}{2}(\varphi(x)^2 + \varphi(y)^2) -
\varphi(\frac{1}{2}(x+y))^2}\\
&>& {\textstyle \frac{1}{2}(\varphi_V(x)^2 + \varphi_V(y)^2) -
\varphi_V(\frac{1}{2}(x+y))^2 - \varepsilon^2}\\
&\geq& {\textstyle \frac{1}{2}(\varphi_V(x)^2 + \varphi_V(y)^2) -
\frac{1}{4}(\varphi_V(x) + \varphi_V(y))^2 - \varepsilon^2}\\
&=& {\textstyle \frac{1}{4}(\varphi_V(x) - \varphi_V(y))^2 -
\varepsilon^2}
\end{eqnarray*}
thus
\begin{equation}\label{varphiest}
|\varphi_V(x) - \varphi_V(y)| \;<\; 2\varepsilon.
\end{equation}
Since $\varphi_V$ is convex, we have
$\max\{\varphi_V(x),\varphi_V(y)\} \geq
\varphi_V(\frac{1}{2}(x+y))$. Together with (\ref{varphiest}), this
implies
\begin{eqnarray*}
\min\{\varphi_V(x),\varphi_V(y)\} &\geq&
\max\{\varphi_V(x),\varphi_V(y)\} - 2\varepsilon\\
&\geq& {\textstyle \varphi_V(\frac{1}{2}(x+y))} - 2\varepsilon\\
&\geq& {\textstyle \left( \varphi(\frac{1}{2}(x+y)) - \varepsilon^2
\right)^{\frac{1}{2}} - 2\varepsilon}\\
&>& 0.
\end{eqnarray*}
Therefore $\varphi_V(x),\varphi_V(y) > 0$. Since $x,y \in A$, we get
$x,y \in V$.
\end{proof}

Proposition \ref{scslices} motivates the introduction of the central
topological concept featuring in this paper.

\begin{defn}
\label{star} We say that a topological space $X$ has {\st}
if there exists a sequence $(\mathscr{U}_n)_{n=1}^\infty$ of
families of open subsets of $X$, with the property that given any
$x,y \in X$, there exists $n \in \nat$ such that
\begin{enumerate}
\item $\{x,y\} \cap \bigcup\mathscr{U}_n$ is non-empty, and
\item $\{x,y\} \cap U$ is at most a singleton for all $U \in \mathscr{U}_n$.
\end{enumerate}
\end{defn}

Any sequence $(\mathscr{U}_n)_{n=1}^\infty$ satisfying the
conditions of Definition \ref{star} will be called a
{\em{\st}-sequence} for $X$. In addition, if $X$ is locally
convex and $A \subseteq X$ then we say $A$ has {\em {\st} with slices}
if $A$ admits a {\st}-sequence $(\mathscr{U}_n)_{n=1}^\infty$, with
the property that every element of $\bigcup_{n=1}^\infty
\mathscr{U}_n$ is an open slice of $A$.

\begin{rem}\label{sliceencode}
It will be convenient to note that if $A \subseteq X$, then to say that
$(A,\sig{X}{F})$ has {\st} with slices is equivalent to there being
a family of subsets $G_n \subseteq (\sph{\dual{X}} \cap F) \times
\real$, $n \in \nat$, such that given distinct $x,y \in A$, we have
$n \in \nat$ satisfying
\begin{enumerate}
\item[(a)] $\max\{f(x),f(y)\} > \lambda$ for some $(f,\lambda) \in
G_n$, and
\item[(b)] $\min\{g(x),g(y)\} \leq \mu$ for every $(g,\mu) \in G_n$.
\end{enumerate}
\end{rem}

Our characterisation follows.

\begin{thm}
\label{sccharac} Let $F \subseteq
\dual{X}$ be a 1-norming subspace. Then the following are equivalent.
\begin{enumerate}
\item $X$ admits a $\sig{X}{F}$-lower semicontinuous
strictly convex norm;
\item $(X,\sig{X}{F})$ has {\st} with slices;
\item $(\sph{X},\sig{X}{F})$ has {\st} with slices;
\item there is a sequence of subsets $(X_n)_{n=1}^\infty$ of $X$, such
that
\[
\setcomp{(x,y) \in X^2}{x \neq y} \subseteq \bigcup_{n=1}^\infty
X_n^2
\]
and where each $(X_n,\sig{X}{F})$ has {\st} with slices.
\end{enumerate}
\end{thm}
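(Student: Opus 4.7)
The plan is to establish the cycle $(1) \Rightarrow (2) \Rightarrow (3) \Rightarrow (1)$ together with the side-chain $(2) \Rightarrow (4) \Rightarrow (1)$, using Propositions \ref{scslices} and \ref{conv-charac} as the two-way bridge between slice separation and convex-function separation. The implications $(2) \Rightarrow (3)$ and $(2) \Rightarrow (4)$ are immediate: for the first, intersect each half-space slice of $X$ with $\sph{X}$; for the second, take $X_n = X$ for every $n$.

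For $(1) \Rightarrow (2)$, I would start from a $\sig{X}{F}$-lsc strictly convex norm $\trinormdot$ and observe that Hahn-Banach inside $\sig{X}{F}$ gives $\trinorm{x} = \sup\{f(x) : f \in F,\ \trinorm{f}_{*} \leq 1\}$, where $\trinorm{\cdot}_{*}$ denotes the associated dual norm. Index a candidate {\st}-sequence by pairs of positive rationals: for each $(p,q) \in \mathbb{Q}_{>0} \times \mathbb{Q}_{>0}$, set
\[
  \mathscr{U}_{p,q} = \{\{z \in X : f(z) > q\} : f \in F,\ \trinorm{f}_{*} \leq p\}.
\]
For distinct $x,y \in X$ with $\trinorm{x} > \trinorm{y}$, pick rational $(p,q)$ satisfying $q/\trinorm{x} < p \leq q/\trinorm{y}$; then $f(y) \leq p\trinorm{y} \leq q$ for every admissible $f$, while the supremum formula produces some admissible $f$ with $f(x) > q$. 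If instead $\trinorm{x} = \trinorm{y} = r > 0$, strict convexity yields $c := \trinorm{(x+y)/2} < r$, and choosing $p = 1$ with rational $q \in (c,r)$ makes $\min\{f(x), f(y)\} \leq f((x+y)/2) \leq c < q$ for every admissible $f$, while some admissible $f$ has $f(x) > q$.

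For $(3) \Rightarrow (1)$, I would apply Proposition \ref{scslices} to the bounded set $A = \sph{X}$ and each family $\mathscr{U}_k$ in the given {\st}-sequence, obtaining convex, $\sig{X}{F}$-lsc, 1-Lipschitz functions $\varphi_k : X \to [0,\infty)$. Taking the contrapositive of Proposition \ref{scslices} and combining with convexity yields the strict midpoint inequality $\varphi_k(\tfrac{1}{2}(x+y)) < \max\{\varphi_k(x), \varphi_k(y)\}$ whenever distinct $x, y \in \sph{X}$ satisfy {\st} at index $k$. To transfer this to all of $X$, form the countable family $\{\norm{\cdot}\} \cup \{\varphi_{k,q}\}_{k \in \nat,\, q \in \mathbb{Q}_{>0}}$ with $\varphi_{k,q}(z) := \varphi_k(z/q)$, which remain convex and $\sig{X}{F}$-lsc. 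The original norm $\norm{\cdot}$ separates midpoints of pairs with distinct norms; for $\norm{x} = \norm{y} = r > 0$ apply {\st} to $x/r, y/r \in \sph{X}$ to produce an index $k$ for which $\varphi_{k,r}$ separates the triple, then use the Lipschitz bound $|\varphi_{k,q}(z) - \varphi_{k,r}(z)| \leq |1/q - 1/r|\norm{z}$ to locate a rational $q$ close enough to $r$ for $\varphi_{k,q}$ to still separate. Proposition \ref{conv-charac} then delivers the norm in (1).

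For $(4) \Rightarrow (1)$, I would first reduce to bounded $X_n$ by replacing $\{X_n\}$ with the countable family $\{X_n \cap m\ball{X}\}_{n, m \in \nat}$, which still covers $\{(x,y) : x \neq y\}$ and inherits {\st} with slices, since a slice of $X_n \cap m\ball{X}$ is a slice of $X_n$ intersected with $m\ball{X}$. Then Proposition \ref{scslices} applied to each bounded $X_n$ produces convex $\sig{X}{F}$-lsc functions, whose midpoint-separating property for pairs in $X_n^2$ follows directly, and Proposition \ref{conv-charac} closes the loop. The main technical step is the rescaling argument in $(3) \Rightarrow (1)$: the 1-Lipschitz control furnished by Proposition \ref{scslices} is essential for transferring midpoint separation from $\sph{X}$ to concentric spheres of arbitrary, possibly irrational, radius while keeping the separating family countable.
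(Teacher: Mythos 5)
Your proposal is correct, and most of it coincides with the paper's argument: the implication (1) $\Rightarrow$ (2) via the observation that $F$ remains 1-norming for the new norm together with a rational-threshold slice family, the trivial implications (2) $\Rightarrow$ (3) and (2) $\Rightarrow$ (4), and the implication (4) $\Rightarrow$ (1) via intersecting with $m\ball{X}$, Proposition \ref{scslices} and Proposition \ref{conv-charac} are all essentially the paper's proof. Where you genuinely diverge is in handling the sphere condition (3): the paper proves (3) $\Rightarrow$ (2) directly at the level of slices, by an arithmetic rescaling of the pairs $(f,\lambda)$ encoding the slices (the families $H_q$ and $L_{n,q,r}$, with the inequality (\ref{starapprox}) controlling the error), and then funnels everything through (2) $\Rightarrow$ (4) $\Rightarrow$ (1). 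You instead prove (3) $\Rightarrow$ (1) by rescaling the convex functions $\varphi_k$ produced by Proposition \ref{scslices} on $\sph{X}$, using their 1-Lipschitz property (valid for the original norm since $F$ is 1-norming, so $\pnormdot{F}$ restricted to $X$ is the norm) to replace the true radius $r$ by a nearby rational $q$ while preserving the strict midpoint inequality; the equivalence of (2) with the rest then follows only through the cycle (3) $\Rightarrow$ (1) $\Rightarrow$ (2). Your route is softer and shorter for the purpose of the theorem, at the cost of not exhibiting a direct combinatorial passage from a slice {\st}-sequence on the sphere to one on the whole space; the paper's version keeps that upgrade entirely inside the slice framework, which is in the spirit of Remark \ref{sliceencode}. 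Two small points worth making explicit if you write this up: the original norm you adjoin to the countable family is itself $\sig{X}{F}$-lower semicontinuous because $F$ is 1-norming, and when you perturb from $r$ to rational $q$ you should note that the three points $x$, $y$, $\frac{1}{2}(x+y)$ all have norm at most $r$, so the Lipschitz error $|1/q-1/r|\norm{z}$ is uniformly small over the triple.
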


\begin{proof}
(1) $\Rightarrow$ (2): let $\normdot$ be a
$\sig{X}{F}$-lower semicontinuous strictly convex norm on $X$. Then
$F$ is also $1$-norming for $\normdot$. Let
\[
G_q = (\sph{\dual{(X,\normdot)}} \cap F) \times \{q\}
\]
for each rational number $q > 0$. We verify that $(X,\sig{X}{F})$
has {\st} by showing that the $G_q$ satisfy (a) and (b) of Remark
\ref{sliceencode}. Given distinct $x,y \in X$, assume
that $\norm{x} \leq
\norm{y}$. The strict convexity of $\normdot$ tells us that
$\norm{\frac{1}{2}(x+y)} < \norm{y}$. Let rational $q$ satisfy
$\norm{\frac{1}{2}(x+y)} < q < \norm{y}$. Since $F$ is 1-norming for
$\normdot$, we know that $f(y) > q$ for a pair $(f,q) \in G_q$,
giving (a). Now suppose $g(y) > q$ for some $(g,q) \in G_q$.
Then certainly $g(x) \leq q$, else we would have
\[
{\textstyle q < \frac{1}{2}g(x+y) \leq \frac{1}{2}\norm{x+y},}
\]
which doesn't make any sense. This shows that (b) is also satisfied.

(2) $\Rightarrow$ (3) is trivial because {\st} with slices is
inherited by subspaces. (3) $\Rightarrow$ (2): if
$(\sph{X},\sig{X}{F})$ has {\st} with slices then we take sets
$G_n$, $n \in \nat$ that satisfy (a) and (b) of Remark
\ref{sliceencode}. We can assume that $G_n \subseteq (\sph{\dual{X}}
\cap F) \times (-1,1)$ for every $n$. Given rational $q,r
> 0$, set
\[
H_q = (\sph{\dual{X}} \cap F) \times \{q\} \qquad\text{and}\qquad
L_{n,q,r} = \setcomp{(f,q(\lambda+r))}{(f,\lambda) \in G_n}.
\]
We claim that the $H_q$ and $L_{n,q,r}$ verify that $(X,\sig{X}{F})$
has {\st}, using Remark \ref{sliceencode}.

Let $x,y \in X$ be distinct, with $\norm{x} \leq \norm{y}$. If
$\norm{x} < \norm{y}$ then we choose rational $q$ to satisfy
$\norm{x} < q < \norm{y}$. Since $F$ is 1-norming, it is easy to
check that (a) and (b) are fulfilled by $H_q$. Now suppose
$\norm{x} = \norm{y}$. We know that, with respect to $x/\norm{x}$
and $y/\norm{y}$, (a) and (b) are satisfied by some $G_n$. Without
loss of generality, assume $f(x) > \norm{x}\lambda$, where
$(f,\lambda) \in G_n$. Our argument depends on the sign of
$\lambda$. If $\lambda \geq 0$ then choose rational $q,r
> 0$ satisfying
\[
f(x) > \norm{x}(\lambda + r)\qquad\text{and}\qquad
\frac{\norm{x}}{1+r} < q < \norm{x}.
\]
The constants have been arranged to ensure
\begin{equation}\label{starapprox}
\mu(\norm{x} - q) < \norm{x}-q < qr \qquad\text{whenever $|\mu| <
1$.}
\end{equation}
We have $f(x) > \norm{x}(\lambda+r)
> q(\lambda+r)$. Now suppose that $g(x) > q(\mu+r)$, where $(g,\mu)
\in G_n$. Then
\[
g(x) > q(\mu + r) > \norm{x}\mu
\]
by equation (\ref{starapprox}) above. This means $g(x/\norm{x}) >
\mu$, whence $g(y/\norm{y}) \leq \mu$ by (b), giving $g(y) <
q(\mu+r)$. In summary, we have shown that (a) and (b) of Remark
\ref{sliceencode} are fulfilled by $L_{n,q,r}$. If instead $\lambda
< 0$, we choose $r < -\lambda$ as above and ensure that $q$
satisfies
\[
\norm{x} < q < \frac{\norm{x}}{1-r}.
\]
By arguing similarly, we get what we want.

(2) $\Rightarrow$ (4) follows easily by setting $X_n = X$. We finish by
proving (4) $\Rightarrow$ (1). By taking intersections with
$m\ball{X}$, $m \in \nat$, and reindexing if necessary, we can assume that each
$X_n$ is bounded. Let each $X_n$ have a {\st}-sequence
$(\mathscr{U}_{n,m})_{m=1}^\infty$, where each element of
$\bigcup_{m=1}^\infty \mathscr{U}_{n,m}$ is a (non-empty)
$\sig{X}{F}$-open slice of $X_n$. Let $\varphi_{n,m}$ denote the
convex function constructed by applying Proposition \ref{scslices}
to $X_n$ and the family $\mathscr{U}_{n,m}$. We have ensured that if
$x,y \in X$ are distinct then we can find $n$ and $m$ such that
$\varphi_{n,m}(\frac{1}{2}(x+y)) <
\max\{\varphi_{n,m}(x),\varphi_{n,m}(y)\}$. The rest follows from
Proposition \ref{conv-charac}.
\end{proof}

Note that Theorem \ref{sccharac} (1), (2) and (4) are also equivalent
when $F$ is simply a norming subspace, rather than a 1-norming subspace.
We end this section by giving an example to show that the reliance
on slices in the statement of Theorem \ref{sccharac} is necessary
in general.

\begin{example}\label{slicesnecessary}
Let $K$ be the product $\{0,1\}^{\wone}$, endowed with the lexicographic
order topology. According to
\cite[Example 1]{hjnr:00}, $C(K)$ admits a Kadec norm
$\normdot$ but no strictly convex norm. By the definition
of Kadec norms, the weak topology agrees with the norm topology on
$\sph{(C(K),\normdot)}$. In particular $(\sph{(C(K),\normdot)},w)$
is metrisable, meaning that it has a $\sigma$-discrete base and thus
has {\st} as well. However, since $\normdot$ cannot be strictly convex,
Theorem \ref{sccharac} implies that
$(\sph{(C(K),\normdot)},w)$ does not have {\st} {\em with slices}.
\end{example}

We conclude this section by giving a sufficient condition for
constructing strictly convex norms. Theorem \ref{merc-suff} below
can be applied to many spaces of significance to the theory, such as
the Mercourakis spaces $c_1(\Sigma^\prime \times \Gamma)$ (see
\cite[Section VI.6]{dgz:93}), Dashiell-Lindenstrauss spaces and
spaces of the form $\dual{C(K)}$, where $K$ is Gruenhage. The idea,
which goes back to the classical norm of Day for $c_0(\Gamma)$
\cite[Theorem 10]{day:55}, is to `glue together' strictly convex
norms on finite-dimensional spaces (which are readily available) to
obtain strictly convex norms on larger spaces. Elements of Theorem
\ref{merc-suff} can be found in \cite[Theorem 5]{fmz:10}. Before
giving the theorem, we state a simple fact.

\begin{fact}\label{convpmfact}
Let $\mapping{\xi}{[0,1]}{\real}$ be a function satisfying
$\xi(0)\xi(1) < 0$, and suppose that $\xi_+$ and $\xi_-$ are convex.
Then for every $\lambda\in(0,1)$, we have
\begin{equation}\label{convexpm}
\xi_{\pm}(\lambda) < \lambda\xi_{\pm}(1) + (1-\lambda)\xi_{\pm}(0).
\end{equation}
\end{fact}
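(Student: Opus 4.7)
The plan is to derive the strict inequality in (\ref{convexpm}) in two moves: obtain the non-strict version directly from convexity, and then upgrade to strictness via the classical rigidity fact for convex functions on a compact interval (namely, that if such a function meets its chord at some interior point, it must coincide with the chord on the entire interval). Without loss of generality I would assume $\xi(0)>0>\xi(1)$, so that $\xi_+(0)=\xi(0)$, $\xi_+(1)=0$, $\xi_-(0)=0$ and $\xi_-(1)=-\xi(1)>0$; the non-strict form $\xi_\pm(\lambda)\leq\lambda\xi_\pm(1)+(1-\lambda)\xi_\pm(0)$ for $\lambda\in(0,1)$ is then immediate from the convexity of $\xi_+$ and $\xi_-$.

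For the strict version, I would argue by contradiction and suppose, for instance, that $\xi_+(\lambda_0)=(1-\lambda_0)\xi(0)$ for some $\lambda_0\in(0,1)$. The rigidity fact then forces $\xi_+(t)=(1-t)\xi(0)$ throughout $[0,1]$, so $\xi_+(t)>0$ on $[0,1)$. Because $\xi_+$ and $\xi_-$ have disjoint supports by their very definition as positive and negative parts, this forces $\xi_-(t)=0$ on $[0,1)$, and therefore $\xi(t)=(1-t)\xi(0)>0$ for $t\in[0,1)$. This is incompatible with the hypothesis $\xi(1)<0$ (via the continuity of $\xi$ inherent in the applications of the fact); the symmetric argument handles $\xi_-$. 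As a purely algebraic cross-check, were the symmetric equality $\xi_-(\lambda_1)=-\lambda_1\xi(1)$ also to hold at some $\lambda_1\in(0,1)$, rigidity would give $\xi_-(t)=-t\xi(1)>0$ on $(0,1]$ and hence $\xi_+(t)=0$ on $(0,1]$, contradicting $\xi_+(t)>0$ on $[0,1)$ at any interior point.

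The main obstacle is the delicate boundary behaviour at the endpoint $t=1$: a convex function on $[0,1]$ is free to jump upward at an endpoint, so the rigidity argument alone does not immediately yield a contradiction from the affine-$\xi_+$ scenario without invoking either the continuity of $\xi$ or the interplay between $\xi_+$ and $\xi_-$ afforded by the sign condition $\xi(0)\xi(1)<0$ together with the pointwise identity $\xi_+\cdot\xi_-\equiv 0$. This is the only subtle point; once the boundary issue is handled, the conclusion follows cleanly.
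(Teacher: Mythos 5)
Your route --- the non-strict inequality from convexity followed by the chord-rigidity lemma to force $\xi_+$ to be affine --- is genuinely different from the paper's proof, which uses continuity to locate the zero interval $[a,b]\subseteq(0,1)$ of $\xi$ and then runs an explicit three-case computation. Your rigidity step is sound: equality at one interior point does force $\xi_+(t)=(1-t)\xi(0)$ on all of $[0,1]$, hence $\xi>0$ on $[0,1)$. But the final step, deriving a contradiction with $\xi(1)<0$, is a genuine gap, and neither of the two ways you offer to close it works within the stated hypotheses. Appealing to ``the continuity of $\xi$ inherent in the applications'' imports a hypothesis the Fact does not contain; and the proposed alternative via the sign condition together with $\xi_+\cdot\xi_-\equiv 0$ provably cannot succeed, because the statement is simply false without endpoint continuity. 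Take $\xi(t)=1-t$ for $t\in[0,1)$ and $\xi(1)=-1$. Then $\xi(0)\xi(1)<0$; $\xi_+(t)=1-t$ is affine, hence convex; and $\xi_-$, which is $0$ on $[0,1)$ and $1$ at $t=1$, is also convex, since a convex function on a closed interval is free to jump upward at an endpoint. Yet $\xi_+(\lambda)=1-\lambda=\lambda\xi_+(1)+(1-\lambda)\xi_+(0)$ for every $\lambda$, so (\ref{convexpm}) fails with equality. Your concluding ``cross-check'' does not substitute for the missing step either: it only shows that equality cannot hold for $\xi_+$ and $\xi_-$ simultaneously, whereas the Fact asserts strict inequality for each separately.

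You did correctly identify the endpoint behaviour as the one subtle point, and in fairness the paper's own proof has the same soft spot: it asserts that $\xi$ is ``necessarily continuous'', which follows from convexity of $\xi_{\pm}$ only on the open interval, and the existence of the zero interval with $0<a\le b<1$ likewise needs the intermediate value theorem up to the endpoints. Everything is legitimate in the intended application (Theorem \ref{merc-suff}) because there $\xi_{\pm}$ are restrictions to a segment of convex functions defined on all of $X$, and such restrictions are continuous on the closed segment since they extend convexly beyond its endpoints. If you add continuity of $\xi$ at $0$ and $1$ as a hypothesis (or derive it from such an ambient extension), your argument closes immediately: $\xi(t)=(1-t)\xi(0)$ on $[0,1)$ forces $\xi(1)=0$, contradicting $\xi(1)<0$. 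As written, however, the proof is incomplete at exactly the point you flagged.
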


\begin{proof}
Assume $\xi(0) > 0$. Since $\xi_\pm$ are convex and $\xi$ is
necessarily continuous, it is easy to see that there is a unique
interval $[a,b]$, where $0 < a \leq b < 1$, such that
\[
\xi(u) > \xi(v) = 0 > \xi(w)
\]
whenever $u \in [0,a)$, $v \in [a,b]$ and $w \in (b,1]$. If $\lambda
\in [a,b]$ then clearly equation (\ref{convexpm}) holds for $\xi_\pm$. Let
$\lambda < a$. Then $\xi_-(\lambda) = 0$ and, as $\xi_-(1) > 0$,
(\ref{convexpm}) holds for $\xi_-$. Since $\xi_+$ is convex, setting
$\mu = \lambda/a$ gives
\[
\xi_+(\lambda) \leq (1-\mu)\xi_+(0) + \mu\xi_+(a) = (1-\mu)\xi_+(0)
< (1-\lambda)\xi_+(0)
\]
so (\ref{convexpm}) holds for $\xi_+$. The proof for the case
$\lambda > b$ is similar.
\end{proof}

Clearly, if $\xi$ is linear then $\xi_\pm$ are convex. The same is
true if $\xi$ is positive and convex.

\begin{thm}\label{merc-suff}
Let $\mapping{\Theta_n}{X}{\lpk{\infty}{\Gamma_n}}$ be a sequence of
maps such that both functions $x \mapsto \Theta_{n,\pm}(x)(\gamma)$
are $\sig{X}{F}$-lower semicontinuous and convex for every $\gamma
\in \Gamma_n$ and $n \in \nat$.

Let us assume in addition that for all distinct $x,y\in X$, there
are $\lambda \in (0,1)$, $n\in \nat$ and a finite set $A \subseteq
\Gamma_n$, such that
\begin{equation*}\tag{1}\label{dis}
\Theta_n(x)\restrict{A}\, \neq \Theta_n(y)\restrict{A}, \mbox{ and}
\end{equation*}
\begin{equation*}\tag{2}\label{mid}
|\Theta_n(z)(\alpha)| > |\Theta_n(z)(\gamma)| \mbox{ whenever }
\alpha \in A \mbox{ and }\gamma \in \Gamma \setminus A,
\end{equation*}
where $z=\lambda x+ (1-\lambda)y$. Then $X$ admits a
$\sigma(X,F)$-lower semicontinuous strictly convex norm
$\trinormdot$.

Instead, if $X$ is a Banach lattice, $\Theta_{n,\pm}(x) \leq
\Theta_{n,\pm}(y)$ whenever $|x| \leq |y|$ and equations (\ref{dis})
and (\ref{mid}) apply to distinct $x,y \in X_+$, then $\trinormdot$
is a $\sigma(X,F)$-lower semicontinuous strictly convex lattice
norm.
\end{thm}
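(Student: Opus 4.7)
The plan is to apply Proposition \ref{conv-charac}: we must construct a countable family of $\sigma(X,F)$-lower semicontinuous convex functions $\varphi_n : X \to [0,\infty)$ for which the midpoint inequality (\ref{convsc}) witnesses every pair of distinct points. Following Day's device for $c_0(\Gamma)$, we glue finitely-supported strictly-convex data together by defining, for each $n, k \in \nat$,
\[
\varphi_{n,k}(x) \;=\; \sup \sum_{i=1}^{k} 4^{-i} \bigl|\Theta_n(x)(\gamma_i)\bigr|^{2},
\]
the supremum taken over all injective $k$-tuples $(\gamma_1,\dots,\gamma_k) \in \Gamma_n^{k}$. Since $|\Theta_n(x)(\gamma)| = \Theta_{n,+}(x)(\gamma) + \Theta_{n,-}(x)(\gamma)$ is a non-negative, convex, $\sigma(X,F)$-lower semicontinuous function of $x$, and squaring preserves these three properties, each $\varphi_{n,k}$ inherits them as a supremum. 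The estimate $\varphi_{n,k}(x) \leq \tfrac{1}{3}\|\Theta_n(x)\|_{\infty}^{2}$ ensures finite values.

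Now fix distinct $x,y \in X$. The hypothesis provides $\lambda \in (0,1)$, $n \in \nat$ and a finite set $A \subseteq \Gamma_n$ such that $\Theta_n(x)\restrict{A} \neq \Theta_n(y)\restrict{A}$, and, writing $z = \lambda x + (1-\lambda) y$, the function $\gamma \mapsto |\Theta_n(z)(\gamma)|$ attains its top $k := |A|$ values precisely on $A$ by (\ref{mid}). Consequently the supremum defining $\varphi_{n,k}(z)$ is realised by ordering $A$ decreasingly according to $|\Theta_n(z)(\alpha)|$, yielding
\[
\varphi_{n,k}(z) \;=\; \sum_{\alpha \in A} b_\alpha \bigl|\Theta_n(z)(\alpha)\bigr|^{2},
\]
where each $b_\alpha$ equals $4^{-i}$ for the position $i$ of $\alpha$ in this ordering. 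The same tuple is a legitimate competitor in the suprema defining $\varphi_{n,k}(x)$ and $\varphi_{n,k}(y)$, so $\varphi_{n,k}(x) \geq \sum_\alpha b_\alpha |\Theta_n(x)(\alpha)|^{2}$ and likewise for $y$. It therefore suffices to find some $\alpha \in A$ with
\[
\bigl|\Theta_n(z)(\alpha)\bigr|^{2} \;<\; \lambda\bigl|\Theta_n(x)(\alpha)\bigr|^{2} + (1-\lambda)\bigl|\Theta_n(y)(\alpha)\bigr|^{2};
\]
weighting by the positive $b_\alpha$ and summing yields $\varphi_{n,k}(z) < \lambda\varphi_{n,k}(x)+(1-\lambda)\varphi_{n,k}(y)$, whence by convexity of $\varphi_{n,k}$ along the segment $[x,y]$ also the midpoint version of (\ref{convsc}).

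The main obstacle is the coordinatewise strict square inequality above. Choose $\alpha \in A$ with $\Theta_n(x)(\alpha) \neq \Theta_n(y)(\alpha)$, furnished by (\ref{dis}), and put $\xi(t) = \Theta_n(tx + (1-t)y)(\alpha)$; then $\xi_\pm(t) = \Theta_{n,\pm}(tx + (1-t)y)(\alpha)$ are convex by hypothesis. If $\xi(0)\xi(1) < 0$, Fact \ref{convpmfact} gives the strict inequalities $\xi_\pm(\lambda) < \lambda\xi_\pm(1) + (1-\lambda)\xi_\pm(0)$; squaring (all quantities being non-negative) and then applying convexity of $t \mapsto t^2$ separately to each sign part, we reach $\xi_\pm(\lambda)^{2} < \lambda \xi_\pm(1)^{2} + (1-\lambda)\xi_\pm(0)^{2}$, and adding produces what we want via the pointwise identity $|\xi|^{2} = \xi_+^{\,2} + \xi_-^{\,2}$. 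Otherwise $\xi(0)$ and $\xi(1)$ share a sign (possibly with one being zero); the sign part of $\xi$ vanishing at both endpoints is convex and non-negative and so vanishes throughout $[0,1]$, leaving $|\xi|$ equal to the other sign part, hence convex, non-negative, and with $|\xi(0)| \neq |\xi(1)|$; convexity of $|\xi|$ combined with strict convexity of squaring on $[0,\infty)$ then furnishes the strict inequality. Proposition \ref{conv-charac} now delivers a $\sigma(X,F)$-lower semicontinuous strictly convex norm $\trinormdot$. In the lattice case, the same $\varphi_{n,k}$ remain monotone with respect to $|\cdot|$, because $|x| \leq |y|$ forces $\Theta_{n,\pm}(x) \leq \Theta_{n,\pm}(y)$ and hence $|\Theta_n(x)(\gamma)| \leq |\Theta_n(y)(\gamma)|$ termwise; applying the lattice clause of Proposition \ref{conv-charac} then produces a strictly convex lattice norm.
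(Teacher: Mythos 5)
Your proof is correct, and although it rests on the same pillars as the paper's argument --- Proposition \ref{conv-charac}, a Day-type supremum that hypothesis (\ref{mid}) localises onto the finite set $A$, and Fact \ref{convpmfact} for coordinates where $\Theta_n(x)(\alpha)$ and $\Theta_n(y)(\alpha)$ have opposite signs --- the decomposition is genuinely different. The paper runs three families $\psi_{m,n,i}$, $i\in\{-1,0,1\}$ (unweighted sums of $\Theta_{n,+}$, of $\Theta_n^2$ and of $\Theta_{n,-}$ over sets of cardinality $m$), and performs the sign-change versus no-sign-change case split at the level of which family to invoke, introducing an auxiliary set $B\subseteq A$ in the sign-change case. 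You instead use a single weighted family of sums of $|\Theta_n(\cdot)(\gamma)|^2$ and push the case split down to one coordinate: either Fact \ref{convpmfact} followed by squaring, or the vanishing of one sign part together with strict convexity of $t\mapsto t^2$. This buys a cleaner, unified argument (one family, one strict coordinatewise inequality propagated through the positive weights $b_\alpha$), at the modest cost of the rearrangement observation needed to identify where the weighted supremum at $z$ is attained, which the paper's unweighted sums over sets of fixed cardinality avoid. Two steps you leave implicit are needed but true: the non-strict inequality $|\Theta_n(z)(\alpha)|^2\le\lambda|\Theta_n(x)(\alpha)|^2+(1-\lambda)|\Theta_n(y)(\alpha)|^2$ at the remaining coordinates of $A$ (from convexity of the non-negative convex function $|\Theta_n(\cdot)(\alpha)|$ composed with $t\mapsto t^2$), without which the single strict coordinate would not control the weighted sum; and the passage from strictness at the parameter $\lambda$ to the midpoint inequality required by Proposition \ref{conv-charac}, which the paper likewise only gestures at and which holds because a convex function $g$ on $[0,1]$ satisfying $g(\tfrac12)=\max\{g(0),g(1)\}$ must be constant, contradicting $g(\lambda)<\lambda g(1)+(1-\lambda)g(0)$.
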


\begin{proof}
Since $\Theta_{n,\pm}(\cdot)(\gamma)$ are both convex and
$\sig{X}{F}$-lower semicontinuous, the same is true of
$|\Theta_n(\cdot)(\gamma)|$. Define
\[
\Theta_{n,0}(x)(\gamma)= \Theta^2_n(x)(\gamma)
\qquad\mbox{and}\qquad \Theta_{n,\pm 1}(x)(\gamma) =
\Theta_{n,\pm}(x)(\gamma).
\]
If $\Gamma = \bigcup_{n=1}^\infty \Gamma_n$, $u \in \lpk{\infty}{\Gamma}$
and $A \subseteq \Gamma$ is finite, set
\[
\varphi_A(u) = \sum_{\gamma \in A}u(\gamma)
\]
and put $\varphi_{A,n,i}= \varphi_A\circ\Theta_{n,i}$ for every
$n\in \nat$ and $i \in \{-1,0,1\}$. Certainly, each
$\varphi_{A,n,i}$ is $\sig{X}{F}$-lower semicontinuous, non-negative
and convex. Finally, let
\[
\psi_{m,n,i}(x)=\sup\setcomp{\varphi_{A,n,i}(x)}{A \subseteq
\Gamma_n \mbox{ has cardinality }m}.
\]
To finish the proof, we shall show that for every distinct pair $x,y
\in X$, there is $m,n\in \nat$ and $i \in \{-1,0,1\}$ such that
\begin{equation} \label{conv}
{\textstyle \psi_{m,n,i}(\frac{1}{2}(x+y)) < \max \{
\psi_{m,n,i}(x),\, \psi_{m,n,i}(y)\}}.
\end{equation}
holds. Then we can appeal to Proposition \ref{conv-charac}.

Take $\lambda \in (0,1),$ $n \in \nat$ and $A \subseteq \Gamma_n$
satisfying (\ref{dis}) and (\ref{mid}). We consider two cases. First
suppose that $\Theta_n(x)(\beta)\Theta_n(y)(\beta) < 0$ for some
$\beta \in A$. From (\ref{mid}) we know that $\Theta_n(z)(\beta)
\neq 0$. Assume for now that $\Theta_n(z)(\beta) > 0$ and define the
non-empty set
\[
B = \setcomp{\alpha \in A}{\Theta_{n,+}(z)(\alpha) > 0}.
\]
so that
\[
\Theta_{n,+}(z)(\alpha) > \Theta_{n,+}(z)(\gamma)
\]
for every $\alpha \in B$ and $\gamma \in \Gamma\setminus B$.
Therefore $\psi_{n,m,1}(z) = \sum_{\alpha \in
B}\Theta_n(z)(\alpha)$, where $m$ is the cardinality of $B$.
Applying Fact \ref{convpmfact} to $\xi(t) = \Theta_n(tx +
(1-t)y)(\beta)$, $t \in [0,1]$, we get
\[
\Theta_{n,+}(z)(\beta) < \lambda\Theta_{n,+}(x)(\beta) +
(1-\lambda)\Theta_{n,+}(y)(\beta)
\]
whence
\[
\psi_{n,m,1}(z) < \lambda\psi_{n,m,1}(x) +
(1-\lambda)\psi_{n,m,1}(y)
\]
from which (\ref{conv}) quickly follows for $i=1$, by convexity. If
$\Theta_n(z)(\beta) < 0$ then we argue similarly with $i=-1$.

Let's now consider the case
\begin{equation}\label{zeroprod}
\Theta_n(x)(\alpha)\Theta_n(y)(\alpha) \geq 0
\end{equation}
for all $\alpha \in A$. Let $m \in \nat$ be the cardinality of $A$.
Since $t \mapsto t^2$ is strictly convex, from condition (\ref{dis})
we have
\begin{eqnarray*}
\sum_{\alpha \in A}(\lambda\Theta_n(x)(\alpha) +
(1-\lambda)\Theta_n(y)(\alpha))^2
&<& \sum_{\alpha \in A}\lambda(\Theta_n(x)(\alpha))^2 + (1-\lambda)(\Theta_n(y)(\alpha))^2\\
&=& \lambda\varphi_{A,n,0}(x) + (1-\lambda)\varphi_{A,n,0}(y)\\
&\leq& \lambda\psi_{m,n,0}(x) + (1-\lambda)\psi_{m,n,0}(y)\\
&\leq& \max\{\psi_{m,n,0}(x),\, \psi_{m,n,0}(y)\}.
\end{eqnarray*}
Given the convexity of $|\Theta_n(\cdot)(\alpha)|$ and equation
(\ref{zeroprod}), we obtain
\[
|\Theta_n(z)(\alpha)| = |\Theta_n(\lambda x + (1-\lambda)y)(\alpha)|
\leq |\lambda\Theta_n(x)(\alpha) + (1-\lambda)\Theta_n(y)(\alpha)|.
\]
This and condition (\ref{mid}) imply
\[
\psi_{m,n,0}(z) \;=\; \varphi_{A,n,0}(z) \;\leq\; \sum_{\alpha \in
A}(\lambda\Theta_n(x)(\alpha) + (1-\lambda)\Theta_n(y)(\alpha))^2.
\]
Combining these inequalities we see that
\[
\psi_{m,n,0}(z)< \max\{\psi_{m,n,0}(x),
\psi_{m,n,0}(y)\}
\]
from which (\ref{conv}) follows for $i=0$, again by convexity. If we
adopt the lattice assumptions instead, then each $\psi_{n,m,i}$
satisfies the lattice assumptions in Proposition \ref{conv-charac}.
\end{proof}

In the first corollary below is a sufficient condition of
`Mercourakis type', which is formally more general than similar
conditions given in the literature.

\begin{cor}\label{merc}
Let $X$ be a subspace or sublattice of $\lpk{\infty}{\Gamma}$ and
suppose that there are subsets $\Gamma_n \subseteq \Gamma$, $n \in
\nat$, with the property that given $x\in X$ and $\alpha \in \supp
x$, we can find $n$ and $\alpha \in \Gamma_n$, so that
\[
\{\gamma \in \Gamma_n : |x(\gamma)|\geq |x(\alpha)|\}
\]
is finite. Then $X$ admits a pointwise lower
semicontinuous strictly convex norm or lattice norm, respectively.
\end{cor}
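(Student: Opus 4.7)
The plan is to apply Theorem~\ref{merc-suff} with $F$ spanned by the coordinate functionals (so that $\sig{X}{F}$ is the pointwise topology) and with maps $\mapping{\Theta_n}{X}{\lpk{\infty}{\Gamma_n}}$ that differ slightly in the two cases. In the subspace case, take $\Theta_n(x)=x\restrict{\Gamma_n}$; this is linear and pointwise continuous, so $\Theta_{n,\pm}(x)(\gamma)=x(\gamma)_{\pm}$ are both convex and pointwise continuous in $x$. In the sublattice case, take instead $\Theta_n(x)(\gamma)=|x(\gamma)|$; now $\Theta_n$ is non-negative, convex, and pointwise continuous, so $\Theta_{n,+}=\Theta_n$ and $\Theta_{n,-}=0$ meet the same requirements. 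The lattice monotonicity hypothesis is automatic in this second case: if $|x|\leq|y|$ pointwise, then $\Theta_n(x)\leq\Theta_n(y)$, hence $\Theta_{n,\pm}(x)\leq\Theta_{n,\pm}(y)$.

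Given distinct $x,y\in X$ in the subspace case, or distinct $x,y\in X_+$ in the sublattice case, I produce $\lambda\in(0,1)$, $n\in\nat$ and a finite $A\subseteq\Gamma_n$ satisfying conditions~(\ref{dis}) and~(\ref{mid}) of Theorem~\ref{merc-suff}. Pick $\alpha_0$ with $x(\alpha_0)\neq y(\alpha_0)$. The equation $(\lambda x+(1-\lambda)y)(\alpha_0)=0$ has at most one solution in $\lambda$, so in the general case we may choose $\lambda\in(0,1)$ with $z:=\lambda x+(1-\lambda)y$ satisfying $z(\alpha_0)\neq 0$; in the sublattice case, $x(\alpha_0),y(\alpha_0)\geq 0$ with at least one strictly positive, so any $\lambda\in(0,1)$ already gives $z(\alpha_0)>0$. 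Either way $\alpha_0\in\supp z$, so the corollary's hypothesis applied to $z$ furnishes $n\in\nat$ with $\alpha_0\in\Gamma_n$ and $A:=\{\gamma\in\Gamma_n:|z(\gamma)|\geq|z(\alpha_0)|\}$ finite.

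Condition~(\ref{mid}) is then immediate: for $\alpha\in A$ and $\gamma\in\Gamma_n\setminus A$, we have $|\Theta_n(z)(\alpha)|=|z(\alpha)|\geq|z(\alpha_0)|>|z(\gamma)|=|\Theta_n(z)(\gamma)|$. Condition~(\ref{dis}) holds because $\alpha_0\in A$ and $\Theta_n(x)(\alpha_0)\neq\Theta_n(y)(\alpha_0)$---in the sublattice case, this uses $|x(\alpha_0)|\neq|y(\alpha_0)|$, which follows from $x(\alpha_0)\neq y(\alpha_0)$ together with $x(\alpha_0),y(\alpha_0)\geq 0$. Theorem~\ref{merc-suff} therefore yields the desired pointwise lower semicontinuous strictly convex norm, respectively lattice norm. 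The only mild subtlety---and the reason the sublattice case requires a separate choice of $\Theta_n$---is that the naive $\Theta_n(x)=x\restrict{\Gamma_n}$ fails the lattice monotonicity requirement, since $|x|\leq|y|$ does not imply $x_{\pm}\leq y_{\pm}$; passing to $|x|$ remedies this at no cost, as Theorem~\ref{merc-suff}'s lattice version only demands the discrimination conditions (\ref{dis}) and (\ref{mid}) on pairs in $X_+$.
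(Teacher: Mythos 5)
Your argument is correct and follows the same route as the paper's: apply Theorem \ref{merc-suff} to coordinate maps on the sets $\Gamma_n$, choose $\lambda$ so that the combination $z=\lambda x+(1-\lambda)y$ does not vanish at a coordinate $\alpha_0$ where $x$ and $y$ differ, and let the hypothesis applied to $z$ and $\alpha_0$ supply the index $n$ and the finite set $A$ on which (\ref{dis}) and (\ref{mid}) are checked. The one point where you diverge is the choice of $\Theta_n$ in the subspace case: the paper takes $P_n(x)(\gamma)=|x(\gamma)|$ in both cases, whereas you use the restriction $x\restrict{\Gamma_n}$ for subspaces and reserve $|x(\cdot)|$ for sublattices. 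Your split is in fact the more careful one. With the paper's $P_n$, condition (\ref{dis}) amounts to $|x|\restrict{A}\,\neq|y|\restrict{A}$, which does not follow from $x(\alpha_0)\neq y(\alpha_0)$ when $x$ and $y$ agree in modulus (e.g.\ $y=-x$), so the paper's verification has a small gap in the subspace case that your linear choice of $\Theta_n$ repairs; and your closing remark correctly identifies why the linear choice cannot be used in the sublattice case (it violates the monotonicity requirement $\Theta_{n,\pm}(x)\leq\Theta_{n,\pm}(y)$ for $|x|\leq|y|$), so that passing to $|x(\cdot)|$ there, where only positive pairs need to be separated, is both necessary and sufficient.
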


\begin{proof}
Let $P_n(x)(\gamma) = |x(\gamma)|$ whenever $\gamma \in \Gamma_n$
and $n \in \nat$. The coordinate maps are positive and convex. We
show that $P_n$ satisfies conditions (\ref{dis}) and (\ref{mid}) of
Theorem \ref{merc-suff}. Given distinct $x,y \in X$, take $n \in \nat$
and $\beta \in \Gamma_n$ such that $x(\beta) \neq y(\beta)$. Then
there is $\lambda \in (0,1)$ such that $\lambda x(\beta) +
(1-\lambda)y(\beta)$ is non-zero. Set $z = \lambda x + (1-
\lambda) y$ and take $n\in \nat$ such that
\[
A= \setcomp{\alpha \in \Gamma_n}{|z(\alpha)|\geq |z(\beta)|}
\]
is finite. Evidently $\beta \in A$, so $P_n(x)\restrict{A} \neq
P_n(y)\restrict{A}$, and
\[
|P_n(z)(\alpha)| \geq |z(\beta)| > |P_n(z)(\gamma)|
\]
whenever $\alpha \in A$ and $\gamma \in \Gamma_n \setminus A$.
\end{proof}

\begin{cor}[{\cite[Theorem 7]{smith:09}}]
If $K$ is Gruenhage then $\dual{C(K)}$ admits a strictly convex dual
lattice norm.
\end{cor}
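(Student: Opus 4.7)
The strategy is to verify the hypotheses of Theorem \ref{merc-suff} for $X=C(K)^*$ with $F=C(K)$ (so that $\sigma(X,F)$ is the $w^*$-topology), taking the Gruenhage data as the coordinate system.

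Fix a Gruenhage sequence $(\mathscr{U}_n,R_n)$ witnessing Definition \ref{gruenhage}. The first observation is structural: each $R_n=U\cap V$ is open (as an intersection of two open sets in $\mathscr{U}_n$), and consequently the family $\{U\setminus R_n : U\in\mathscr{U}_n\}$ is pairwise disjoint. Setting $\Gamma_n=\mathscr{U}_n$, I would define
\[
\Theta_n\colon C(K)^*\longrightarrow \lpk{\infty}{\mathscr{U}_n},
\qquad \Theta_n(\mu)(U)= |\mu|(U).
\]
For each open $U$ the functional $\mu\mapsto|\mu|(U)$ is non-negative, convex, $w^*$-lower semicontinuous, and lattice-monotone ($|\mu_1|\le|\mu_2|$ implies $|\mu_1|(U)\le|\mu_2|(U)$); hence $\Theta_{n,+}=\Theta_n$ and $\Theta_{n,-}\equiv 0$ satisfy the structural hypotheses of Theorem \ref{merc-suff} in its lattice form.

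Next I would verify (\ref{mid}) for distinct positive $\mu_1,\mu_2\in C(K)^*_+$. Take $\lambda=\tfrac12$ and set $z=\tfrac12(\mu_1+\mu_2)$. Since the sets $U\setminus R_n$ ($U\in\mathscr{U}_n$) are pairwise disjoint and $z$ is finite, only countably many $U\in\mathscr{U}_n$ satisfy $z(U)>z(R_n)$, and along such $U$ the numbers $z(U)$ can accumulate only at $z(R_n)$. Consequently, for any $\beta\in\mathscr{U}_n$ with $z(\beta)>z(R_n)$, the set $A=\{U\in\mathscr{U}_n : z(U)\ge z(\beta)\}$ is finite, and every $\gamma\in\mathscr{U}_n\setminus A$ satisfies $z(\gamma)<z(\beta)\le z(\alpha)$ for all $\alpha\in A$, giving (\ref{mid}).

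The main obstacle is arranging (\ref{dis}) compatibly, that is, producing such $n$ and $\beta$ with additionally $\mu_1(\beta)\ne\mu_2(\beta)$. Here I would use regularity of the Radon measures $\mu_1,\mu_2$ together with the point-separation property of the Gruenhage family: since $\mu_1\ne\mu_2$, one can locate a point $x$ at which the local behaviour of $\mu_1$ and $\mu_2$ differs (e.g.\ by passing to a point of $\mathrm{supp}(\mu_1-\mu_2)$) and then, using the Gruenhage separation at $x$ against a point outside a chosen neighbourhood, find $n$ and $U\in\mathscr{U}_n$ with $x\in U\setminus R_n$ such that the measure difference is witnessed by $U$; by shrinking if necessary, one may ensure $z(U)>z(R_n)$. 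This is the step that really uses the Gruenhage structure and is the crux of the argument; everything else is bookkeeping. Once (\ref{dis}) and (\ref{mid}) are in hand, Theorem \ref{merc-suff} (lattice version) produces the desired $w^*$-lower semicontinuous strictly convex dual lattice norm on $C(K)^*$.
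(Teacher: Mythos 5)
Your overall architecture is exactly that of the paper: both proofs set $\Gamma_n = \mathscr{U}_n$, define $\Theta_n(\mu)(U) = |\mu|(U)$, observe that these coordinates are positive, convex, $\weakstar$-lower semicontinuous and lattice-monotone, and verify condition (\ref{mid}) by the same disjointness/summability argument (the sets $U\setminus R_n$, $U\in\mathscr{U}_n$, are pairwise disjoint, so for any $r>\tau(R_n)$ only finitely many $V\in\mathscr{U}_n$ satisfy $\tau(V)\geq r$). The difference --- and the gap --- is in how condition (\ref{dis}) is obtained.

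The step you yourself flag as the crux does not work as sketched. A Gruenhage sequence separates points, but it need not separate measures: on a two-point space $K=\{x,y\}$ the single family $\mathscr{U}_1=\{\{x\}\}$ (with $R_1=\varnothing$) witnesses Definition \ref{gruenhage}, yet $\delta_y$ and $2\delta_y$ agree on every set of the family. More to the point, your plan to ``use the Gruenhage separation at $x$ against a point outside a chosen neighbourhood'' so as to produce $U\in\mathscr{U}_n$ containing $x$, small enough that the measure difference is witnessed by $U$ and that $z(U)>z(R_n)$ ``after shrinking'', tacitly treats $\bigcup_n\mathscr{U}_n$ as a network. Definition \ref{gruenhage} only gives a $U$ with $\{x,y\}\cap U$ a singleton: that $U$ may fail to contain $x$ at all, and even when it does it may be enormous, so there is no reason that $\mu_1(U)\neq\mu_2(U)$ (the differences can cancel over $U$) and no licence to shrink $U$ within the family. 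The paper closes exactly this gap by first invoking \cite[Lemma 6]{smith:09}, which replaces the given Gruenhage sequence by an enlarged one with the additional property that $\mu(U)=0$ for all $U\in\bigcup_n\mathscr{U}_n$ forces $\mu=0$; applied to $\mu-\nu$ this yields (\ref{dis}) immediately, and the rest of your argument then goes through. Without that preparatory measure-determining refinement of the families, the proof is incomplete.
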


\begin{proof}
If $K$ is Gruenhage then (cf.\ \cite[Lemma 6]{smith:09}), we can
find sequences $(\mathscr{U}_n)_{n=1}^\infty$ and
$(R_n)_{n=1}^\infty$ as in Definition \ref{gruenhage}, with the
further property that if $\mu \in \dual{C(K)}$ and $\mu(U) = 0$ for
all $U \in \mathscr{U}_n$, $n \in \nat$, then $\mu=0$. Let $\Gamma_n
= \mathscr{U}_n$ and define
\[
\Theta_n(\mu)(U) = |\mu|(U),\qquad U \in \mathscr{U}_n.
\]
Since $|\lambda\mu + (1-\lambda)\nu| \leq \lambda|\mu| +
(1-\lambda)|\nu|$ whenever $\lambda \in [0,1]$, the coordinate maps
$\Theta_n(\cdot)(U)$ are positive and convex. If $\mu,\nu \in
\dual{C(K)}$ are positive and distinct, then there exists $n \in
\nat$ and $U \in \mathscr{U}_n$ such that $\mu(U) \neq \nu(U)$. If
we set $\tau = \frac{1}{2}(\mu + \nu)$ then we have $\tau(U) >
\tau(R_n)$. By considering Definition \ref{gruenhage} part (2), we
see that for any $r > \tau(R_n)$, there are only finitely many $V
\in \mathscr{U}_n$ satisfying $\tau(V) \geq r$. Therefore,
conditions (\ref{dis}) and (\ref{mid}) of Theorem \ref{merc-suff}
apply to positive elements of $\dual{C(K)}$. Now we are able to
apply Theorem \ref{merc-suff}.
\end{proof}

Dashiell-Lindenstrauss spaces can be shown to
have strictly convex lattice norms in a similar way.

\section{Strictly convex dual norms on \dual{C(K)}}\label{norm}

Evidently, Theorem \ref{sccharac} relies on geometric assumptions,
in the sense that only sets having {\st} with {\em slices} are
considered. According to Example \ref{slicesnecessary}, it is not
always possible to remove the reliance on slices and deal instead
with open sets having no special geometric properties. However, we
can live without slices in an important special case. We devote this
section to proving the next result.

\begin{thm}
\label{scattered} Let $K$ be a scattered compact space. Then
$\dual{C(K)}$ admits a strictly convex dual (lattice)
norm if and only if $K$ has {\st}.
\end{thm}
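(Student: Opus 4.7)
For the forward direction, suppose that $\dual{C(K)}$ admits a strictly convex dual norm. Theorem \ref{sccharac} then guarantees that $(\dual{C(K)},\weakstar)$ has {\st} with slices, and hence in particular has {\st}. The Dirac embedding $\mapping{\delta}{K}{(\ball{\dual{C(K)}},\weakstar)}$, $x\mapsto\delta_x$, is a weak*-homeomorphism onto its image, and the property {\st} of Definition \ref{star} passes trivially to subspaces by restricting each family $\mathscr{U}_n$. Therefore $K$ itself has {\st}. Note that scatteredness plays no role in this implication.

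The substantive direction is the converse. Assume $K$ is scattered and fix a {\st}-sequence $(\mathscr{U}_n)_{n=1}^\infty$ for $K$. My plan is to apply Theorem \ref{sccharac} to $X=\dual{C(K)}$ with $F=C(K)$, producing a family of weak*-open slices of $\dual{C(K)}$ that witnesses {\st}. The structural input from scatteredness is that every $\mu\in\dual{C(K)}$ is purely atomic, with $\|\mu\|=\sum_{x\in K}|\mu(\{x\})|$; in particular $\dual{C(K)}$ is isometric, as a Banach lattice, to $\ell_1(K)$, and for each $\mu$ and $\varepsilon>0$ only finitely many atoms of $\mu$ satisfy $|\mu(\{x\})|\ge\varepsilon$. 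This uniform finiteness is the engine that converts the point-separation supplied by {\st} on $K$ into slice-separation of measures.

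For the slice construction, I would use $S_{n,f,q}=\{\mu\in\dual{C(K)}:\mu(f)>q\}$, with $q\in\rat$ and $f\in C(K)$ a positive continuous bump satisfying $\{f>0\}\subseteq U$ for some $U\in\mathscr{U}_n$, together with the reflections using $-f$ to accommodate signed measures. Passing to the unit sphere via Theorem \ref{sccharac}(3) and splitting each measure into positive and negative parts, the problem reduces to separating distinct norm-one positive measures $\mu\neq\nu$. Such $\mu,\nu$ disagree on some atom $x^*$; atomic finiteness produces a finite set $F\subset K$ containing all atoms of $\mu+\nu$ above a suitable threshold; and the {\st}-property of $K$ yields $n$ for which $\mathscr{U}_n$ separates $x^*$ from the other elements of $F$. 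Picking $U\in\mathscr{U}_n$ with $U\cap F=\{x^*\}$, approximating $\ind{U\cap F}$ from below by a continuous $f$, and choosing a rational $q\in(\nu(f),\mu(f))$ produces a slice containing $\mu$ but not $\nu$.

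The main obstacle is enforcing condition (b) of Remark \ref{sliceencode} uniformly across the stage-$n$ family: no slice should contain both $\mu$ and $\nu$, despite the small-atom tails of $\mu$ and $\nu$ outside $F$ potentially contributing unpredictably to various $\mu(f)$ and $\nu(f)$. I would address this by invoking Theorem \ref{sccharac}(4) with a countable decomposition $\dual{C(K)}=\bigcup_m X_m$, where $X_m$ collects measures of norm at most $m$ having at most $m$ atoms that exceed $1/m$ in absolute value; on each $X_m$ the tail contributions are uniformly controlled, so the {\st}-property of $K$ lifts cleanly to {\st}-with-slices on $X_m$. The strictly convex dual \emph{lattice} norm version follows in parallel, alternatively via Theorem \ref{merc-suff} applied to the coordinate maps $\Theta_n(\mu)(U)=|\mu|(U)$ for $U\in\mathscr{U}_n$; here the atomic finiteness plays the role that the intersection condition $U\cap V=R_n$ plays in the Gruenhage case.
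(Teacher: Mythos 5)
Your forward implication is exactly the paper's Proposition \ref{nec} and is fine (and you are right that scatteredness is not used there). The converse, however, has two genuine gaps. First, you assert that ``the {\st}-property of $K$ yields $n$ for which $\mathscr{U}_n$ separates $x^*$ from the other elements of $F$'', but Definition \ref{star} only supplies, for each \emph{pair} of points, some $n$ at which that pair is separated; to separate a whole finite set of atoms by a single family you must first close the {\st}-sequence under finite intersections (Lemma \ref{finiteseparation}), and that lemma needs the hypothesis that $F$ meets each $C_n=\bigcup\mathscr{U}_n$ either entirely or not at all. Arranging this forces the partition of $K$ into the sets $C_I=\bigcap_{i\in I}C_i\setminus\bigcup_{i\notin I}C_i$ and a preliminary argument that $\mu(C_I)=\nu(C_I)$ for every $I$ (which is why the paper's seminorms, and the slices of Remark \ref{starproof}, carry the extra sets $\bigcup_{i\in L}C_i$). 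None of this appears in your sketch.

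Second, and more seriously, your slices are built from a bump supported in a \emph{single} $U\in\mathscr{U}_n$, while condition (b) of Remark \ref{sliceencode} must hold for \emph{every} slice at the chosen stage $n$. If $\mu$ and $\nu$ share a large atom $t\neq x^*$, say $\mu(\{t\})=\nu(\{t\})=p$, and $t$ lies in some other $V\in\mathscr{U}_n$, then both measures lie in the slice attached to $V$ for any threshold below $p$, and (b) fails; the thresholds are fixed in advance, so they cannot be calibrated to the pair. Your decomposition into the sets $X_m$ controls the small-atom tails but does nothing about these \emph{shared large} atoms. The paper's proof exists precisely to defeat this: it takes suprema of $|\mu|$ over unions of $k+1$ members of $\mathscr{U}_m$, where $k$ is the number of large atoms common to $\mu$ and $\nu$, so that the shared atoms are absorbed on both sides and the single differing atom $u$ tips the balance; this is the content of the sets $F$, the constants $p$ and $q$, and inequality (\ref{Jineq}). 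Your fallback via Theorem \ref{merc-suff} with $\Theta_n(\mu)(U)=|\mu|(U)$ also does not go through for a general {\st}-sequence: condition (\ref{mid}) there requires the set of $V\in\mathscr{U}_n$ with $|z|(V)$ above a threshold to be finite, which the Gruenhage condition $U\cap V=R_n$ guarantees but which fails when the members of $\mathscr{U}_n$ overlap arbitrarily --- a single large atom can lie in uncountably many of them, so atomic finiteness does not substitute for the intersection condition. The overall strategy (atomicity plus separation of finitely many atoms by open families) is the right one, but the combinatorial core of the proof is missing.
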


Recall that any compact space $K$ embeds naturally
into $(\dual{C(K)},\weakstar)$ by identifying points $t \in K$ with
their Dirac measures $\delta_t$. It follows therefore from Theorem
\ref{scattered} that if $K$ is scattered and
$(\dual{C(K)},\weakstar)$ has {\st} (without slices), then
$(\dual{C(K)},\weakstar)$ has {\st} with slices. One implication of
Theorem \ref{scattered} may be proved easily.

\begin{prop}\label{nec}
If $\dual{C(K)}$ admits a strictly convex dual norm then $K$ has {\st}.
\end{prop}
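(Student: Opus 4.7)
The plan is to deduce Proposition \ref{nec} directly from Theorem \ref{sccharac} together with the Dirac embedding of $K$ into $(\dual{C(K)},\weakstar)$. The idea is that a strictly convex dual norm on $\dual{C(K)}$ is exactly a $\sig{\dual{C(K)}}{C(K)}$-lower semicontinuous strictly convex norm, so Theorem \ref{sccharac} applies with $X=\dual{C(K)}$ and $F = C(K)$, the latter being $1$-norming for $\dual{C(K)}$.

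First I would invoke Theorem \ref{sccharac}, part (1) $\Rightarrow$ (2), to conclude that $(\dual{C(K)},\weakstar)$ has {\st} with slices. In particular, there exists a sequence $(\mathscr{U}_n)_{n=1}^\infty$ of families of $\weakstar$-open subsets of $\dual{C(K)}$ (slices, though we no longer need this refinement) such that given distinct $\mu,\nu \in \dual{C(K)}$, some $n$ satisfies the two conditions of Definition \ref{star}.

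Next I would recall that the Dirac embedding $\mapping{\delta}{K}{(\dual{C(K)},\weakstar)}$, $t \mapsto \delta_t$, is a homeomorphism of $K$ onto its image, since both $K$ and $\delta(K)$ are compact Hausdorff and $\delta$ is continuous and injective. Then I would verify the routine fact that property {\st} passes to subspaces: if $Y \subseteq X$ and $X$ has {\st} as witnessed by $(\mathscr{U}_n)_{n=1}^\infty$, then $Y$ has {\st} as witnessed by the trace families $\mathscr{V}_n = \setcomp{U \cap Y}{U \in \mathscr{U}_n}$, because both conditions in Definition \ref{star} are preserved under intersecting with $Y$. Applying this to $Y = \delta(K)$ and transporting back to $K$ via the homeomorphism $\delta$ yields the desired {\st}-sequence for $K$.

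There is no real obstacle here; the proposition is essentially a corollary of Theorem \ref{sccharac} once one observes that {\st} is hereditary and that $K$ sits topologically inside $(\dual{C(K)},\weakstar)$. The only minor point to check is the $1$-norming hypothesis on $F$, which is automatic for $F = C(K) \subseteq C(K)^{**} = \dual{(\dual{C(K)})}$.
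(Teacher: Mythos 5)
Your proof is correct and is essentially the paper's own argument: the paper likewise applies Theorem \ref{sccharac} (1) $\Rightarrow$ (2) to get {\st} (with slices) for $(\dual{C(K)},\weakstar)$ and then transfers it to $K$ via the Dirac embedding, using that {\st} is hereditary. The only difference is that you spell out the routine verifications (heredity of {\st}, the $1$-norming hypothesis, the embedding being a homeomorphism) that the paper leaves implicit.
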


\begin{proof}
By Theorem \ref{sccharac}, if $\dual{C(K)}$ admits a
strictly convex dual norm then $(\dual{C(K)},\weakstar)$ has {\st},
whence $K$ has {\st} by the natural embedding.
\end{proof}

In order to prove the converse implication, we need to refine our
{\st}-sequences so that they satisfy some additional properties.
Assume that a topological space $X$ admits a {\st}-sequence
$(\mathscr{U}_n)_{n=1}^\infty$. Given any finite sequence of natural
numbers $\sigma = (\oneton{n}{k})$, we define the family
\[
\mathscr{U}_\sigma \;=\; \setcomp{\bigcap_{i=1}^k U_i}{U_i \in
\mathscr{U}_{n_i}\mbox{ for all }i \leq k}.
\]
Let us also set $C_n = \bigcup \mathscr{U}_n$ and $C_\sigma =
\bigcup \mathscr{U}_\sigma$.

\begin{lem}
\label{finiteseparation} Assume that $F \subseteq X$ is a finite
subset such that for all $n$, either $F \cap C_n = \varnothing$ or
$F \subseteq C_n$. Then there exists $\sigma = (\oneton{n}{k})$ such
that $F \subseteq C_\sigma$ and, moreover, $F \cap V$ is at most a
singleton for all $V \in \mathscr{U}_\sigma$.
\end{lem}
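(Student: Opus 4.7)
The plan is to handle the pairwise separation by cycling through every pair in $F$ at once. Since $F$ is finite, enumerate the unordered pairs of distinct points of $F$ as $\{x_1,y_1\},\dots,\{x_m,y_m\}$. For each such pair, Definition \ref{star} supplies an index $n_i$ with $\{x_i,y_i\}\cap C_{n_i}$ non-empty and $\{x_i,y_i\}\cap U$ at most a singleton for every $U\in\mathscr{U}_{n_i}$. Since at least one of $x_i,y_i$ lies in $F\cap C_{n_i}$, the standing hypothesis on $F$ promotes this to $F\subseteq C_{n_i}$. I would set $\sigma=(n_1,\dots,n_m)$ and propose this as the desired finite sequence.

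Next I verify the two conclusions. To see $F\subseteq C_\sigma$, fix $x\in F$; since $F\subseteq C_{n_i}$ for every $i$, one can pick some $U_i\in\mathscr{U}_{n_i}$ containing $x$, and then $x\in\bigcap_{i=1}^m U_i\in\mathscr{U}_\sigma\subseteq C_\sigma$. For the second clause, suppose that $V=\bigcap_{i=1}^m U_i\in\mathscr{U}_\sigma$ (with each $U_i\in\mathscr{U}_{n_i}$) contained two distinct points of $F$, say $x_j$ and $y_j$. Then both would lie in the single set $U_j\in\mathscr{U}_{n_j}$, contradicting the separating property chosen at stage $j$. Since every pair of distinct points of $F$ appears in the enumeration, no $V\in\mathscr{U}_\sigma$ meets $F$ in more than one point.

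I do not anticipate a serious obstacle here: the argument is a straightforward book-keeping of the {\st}-sequence applied to each pair. The only delicate step is the use of the running hypothesis ``$F\cap C_n=\varnothing$ or $F\subseteq C_n$''; without it, the index $n_i$ selected to separate $\{x_i,y_i\}$ would only guarantee coverage of that specific pair, and taking the intersection over all $i$ could easily leave most of $F$ outside $C_\sigma$. The hypothesis is precisely what turns pairwise coverage into global coverage of $F$, allowing the two clauses to be established simultaneously by a single tuple $\sigma$.
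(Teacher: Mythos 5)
Your proof is correct and follows essentially the same route as the paper's: enumerate the doubletons of $F$, use the {\st}-sequence to pick an index $n_i$ for each pair, promote pairwise coverage to $F\subseteq C_{n_i}$ via the hypothesis, and take $\sigma$ to be the tuple of these indices. The verification of both conclusions matches the paper's argument step for step.
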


\begin{proof}
Enumerate the set of doubletons $\{x,y\} \subseteq F$ as
$\{x_1,y_1\},\ldots,\{x_k,y_k\}$. For every $i$, there exists $n_i$
such that $\{x_i,y_i\} \cap C_{n_i}$ is non-empty and $\{x_i,y_i\}
\cap V$ is at most a singleton for all $V \in \mathscr{U}_{n_i}$. By
hypothesis, we have $F \subseteq C_{n_i}$ for all $i$. Put $\sigma =
(\oneton{n}{k})$. If $x \in F$, since $F \subseteq C_{n_i}$ for all
$i$, let $U_i \in \mathscr{U}_{n_i}$ so that $x \in
\bigcap_{i=1}^k U_i \in \mathscr{U}_\sigma$. Therefore $F \subseteq
C_\sigma$. Given $V = \bigcap_{i=1}^k V_i \in \mathscr{U}_{\sigma}$
and distinct $x,y \in F$, we have some $i$ such that $\{x,y\} \cap
W$ is at most a singleton for all $W \in \mathscr{U}_{n_i}$. In
particular, $\{x,y\} \cap V \subseteq \{x,y\} \cap V_i$ is at most a
singleton. This proves that $F \cap V$ is at most a singleton for
any $V \in \mathscr{U}_\sigma$.
\end{proof}

Bearing in mind the $\mathscr{U}_\sigma$, Lemma \ref{finiteseparation},
and by adding new singleton families if necessary, if $X$ has {\st}
then we can assume that there exists a {\st}-sequence with additional
properties, which we list in the next lemma.

\begin{lem}\label{newfamilies}
If $X$ has {\st} then it admits a {\st}-sequence $(\mathscr{U}_n)_{n=1}^\infty$ with the following properties.
\begin{enumerate}
\item $X = C_1$;
\item given $\oneton{n}{k} \in \nat$, there exists $m \in \nat$ such
that
\[
\mathscr{U}_m \;=\; \setcomp{\bigcap_{i=1}^k U_i}{U_i \in
\mathscr{U}_{n_i}\mbox{ for all }i \leq k};
\]
\item if $F$ is a finite subset of $X$ such that for each $n \in \nat$,
either $F \subseteq C_n$ or $F \cap C_n$ is empty, then there exists
$m \in \nat$ with two properties:
\begin{enumerate}
\item $F \subseteq C_m$;
\item $F \cap V$ is at most a singleton for all $V \in \mathscr{U}_m$.
\end{enumerate}
\end{enumerate}
\end{lem}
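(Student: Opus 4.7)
The plan is to start with an arbitrary {\st}-sequence $(\mathscr{V}_n)_{n=1}^\infty$ for $X$ and enlarge it in a controlled, countable way. Specifically, I include the singleton family $\{X\}$ and, for every finite tuple $\sigma = (n_1,\ldots,n_k)$ of positive integers, the intersection family $\mathscr{V}_\sigma = \{\bigcap_{i=1}^{k} V_i : V_i \in \mathscr{V}_{n_i}\}$, whose members are again open. Since there are only countably many such tuples, the augmented collection can be organised as a single sequence $(\mathscr{U}_m)_{m=1}^\infty$, with $\mathscr{U}_1 = \{X\}$. Because this list contains the original one, it is still a {\st}-sequence: any pair $\{x,y\}$ separated by some $\mathscr{V}_n$ remains separated by the same family inside $(\mathscr{U}_m)_{m=1}^\infty$, and the presence of the extra family $\{X\}$ (where $\{x,y\} \cap X$ is not a singleton) does no harm to the separation property.

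Property (1) is built directly into the indexing. For property (2), observe that each $\mathscr{U}_{m_i}$ is, by construction, either $\{X\}$ or $\mathscr{V}_{\tau_i}$ for some finite tuple $\tau_i$. After discarding the trivial $\{X\}$ slots (which do not affect the intersections), the family $\{\bigcap_{i=1}^{k} U_i : U_i \in \mathscr{U}_{m_i}\}$ coincides with $\mathscr{V}_{\tau_1 \frown \cdots \frown \tau_k}$, where $\frown$ denotes concatenation of tuples. Since the concatenated tuple is again a finite tuple of positive integers, this family appears in our enumeration as some $\mathscr{U}_m$.

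Property (3) then follows by a direct appeal to Lemma \ref{finiteseparation} applied to the new sequence $(\mathscr{U}_m)_{m=1}^\infty$ itself. The hypothesis imposed on $F$ in (3) is literally the hypothesis of that lemma, so it produces a tuple $\sigma = (m_1,\ldots,m_k)$ with $F \subseteq C_\sigma$ and $F \cap V$ at most a singleton for all $V \in \mathscr{U}_\sigma$. By property (2), $\mathscr{U}_\sigma = \mathscr{U}_m$ for some $m$, and that $m$ witnesses (3)(a) and (3)(b).

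The only genuine difficulty is the combinatorial bookkeeping needed to ensure that the enlarged sequence really is closed under the family-intersection operation at the level of indices; once the enumeration is arranged with this closure in mind, properties (1) and (3) come almost automatically from the construction and the existing Lemma \ref{finiteseparation}.
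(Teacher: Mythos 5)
Your proposal is correct and follows essentially the same route as the paper, which likewise obtains the lemma by adjoining the singleton family $\{X\}$ and all the intersection families $\mathscr{U}_\sigma$ to a given {\st}-sequence and then invoking Lemma \ref{finiteseparation}. The concatenation argument you give for property (2) is exactly the bookkeeping the paper leaves implicit.
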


Armed with these enhanced {\st}-sequences, we can deliver the proof
of Theorem \ref{scattered}. We ask that our compact spaces be
scattered because the proof relies on the assumption that all
measures in $\dual{C(K)}$ are atomic.

\begin{proof}[Proof of Theorem \ref{scattered}.]
One implication was proved in Proposition \ref{nec}. Now assume that
$K$ is scattered and let $(\mathscr{U}_n)_{n=1}^\infty$ be a
{\st}-sequence for $K$ satisfying the properties of Lemma
\ref{newfamilies}. Given $n \geq 1$, $k \geq 0$ and finite $L \subseteq
\nat$, define the seminorm
\[
\pnorm{\mu}{n,k,L} = \sup\setcomp{|\mu|\left(\bigcup_{i \in L} C_i \cup \bigcup \mathscr{F} \right)}{\mathscr{F}\subseteq \mathscr{U}_n \mbox{ and }\card{\mathscr{F}} = k}.
\]
We show that these seminorms satisfy the requirements of Proposition
\ref{conv-charac}. To this end, suppose that $\mu$ and $\nu$ are positive,
and that
\begin{equation}\label{equality}
{\textstyle \pnorm{\mu}{n,k,L} = \pnorm{\nu}{n,k,L} = \frac{1}{2}\pnorm{\mu + \nu}{n,k,L}}
\end{equation}
for all $n$, $k$ and $L$. For a contradiction, we shall suppose also that
$\mu \neq \nu$. Since
\[
\pnorm{\mu}{1,0,\{n\}} = \mu(C_n)
\]
we have $\mu(C_n) = \nu(C_n) = \frac{1}{2}(\mu+\nu)(C_n)$ for all $n$, by (\ref{equality}). By
Lemma \ref{newfamilies} (1) and (2), and the inclusion-exclusion principle, if $I \subseteq \nat$ then
we know that
\[
\mu(C_{I,n}) = \nu(C_{I,n}) = {\textstyle \frac{1}{2}}(\mu+\nu)(C_{I,n})
\]
where
\begin{eqnarray*}
C_{I,n} &=& \bigcap_{i \leq n,i \in I} C_i \setminus \bigcup_{i \leq n, i \notin I} C_i.
\end{eqnarray*}
By monotone convergence, it follows that
\[
\mu(C_I) = \nu(C_I) = {\textstyle \frac{1}{2}}(\mu+\nu)(C_I)
\]
where
\[
C_I = \bigcap_{i \in I}C_i\setminus \bigcup_{i \in \nat\setminus I} C_i.
\]
Now $K$ is the disjoint union of the $C_I$, where $I$ ranges over non-empty subsets of $\nat$,
and since $\mu \neq \nu$ are atomic, we can find
non-empty $I \subseteq \nat$ such that $\mu\restrict{I}\, \neq \nu\restrict{I}$.
We fix this $I$ from now on. Take a countable set $A \subseteq C_I$ such that we can write
\[
\mu\restrict{I}\, = \sum_{t \in A} a_t\delta_t
\qquad\mbox{and}\qquad \nu\restrict{I}\, = \sum_{t \in A}
b_t\delta_t
\]
for some numbers $a_t,\,b_t \geq 0$. Let
\[
p = \max\setcomp{\max\{a_t,b_t\}}{t \in A, a_t \neq b_t}
\]
\[
q = \max(\setcomp{a_t}{a_t < p} \cup \setcomp{b_t}{b_t < p})
\]
and define the finite, possibly empty, set
\[
F = \setcomp{t \in A}{a_t = b_t \geq p}
\]
and let $k = \card{F}$. Take finite $G \subseteq A$ such that
\begin{equation}\label{Gest}
\sum_{t \in A\backslash G} a_t,\, \sum_{t \in A\backslash G } b_t <
{\textstyle\frac{1}{4}(p - q)}
\end{equation}
and $n$ large enough so that
\begin{equation}\label{nest}
\mu(C_{I,n}\setminus C_I),\, \nu(C_{I,n}\setminus C_I) <
{\textstyle\frac{1}{4}(p - q)}.
\end{equation}
Let $H = \{1,\ldots,n\} \cap I$ and $L = \{1,\ldots,n\} \setminus
I$. By Lemma \ref{newfamilies} (3), we can find $m\in \nat$ such
that $G \subseteq C_m$ and $G \cap V$ is at most a singleton for all
$V \in \mathscr{U}_m$. Since $C_I \subseteq \bigcap_{i \in H} C_i$,
we can and do assume that $C_m \subseteq \bigcap_{i \in H} C_i$, by
Lemma \ref{newfamilies} (2).

It is by considering the seminorm $\pnormdot{m,k+1,L}$ that we reach
our contradiction. Let $u \in A$ such that $a_u \neq b_u$ and
$\max\{a_u,b_u\} = p$. Clearly $u \notin F$. Also, $F \cup \{u\}
\subseteq G$. Indeed, if $t \in A\setminus G$ then $a_t, b_t <
\frac{1}{4}(p - q) < p$. Without loss of generality, assume that
$a_u < b_u = p$. Since $F \cup \{u\} \subseteq C_m$, it is possible
to find $\mathscr{G} \subseteq \mathscr{U}_m$ of cardinality $k+1$,
such that $F \cup \{u\} \subseteq \bigcup\mathscr{G}$.

By considering $\pnormdot{1,0,L}$ and (\ref{equality}), we know that
$\mu\left(\bigcup_{i \in L} C_i\right) = \nu\left(\bigcup_{i \in L}
C_i\right)$. We shall denote this common quantity by $c$. We estimate
\begin{eqnarray}
\label{nuest}\pnorm{\nu}{m,k+1,L} &\geq& \nu\left(\bigcup_{i \in L} C_i \cup \bigcup\mathscr{G} \right)\\
\nonumber &\geq& \nu\left(\bigcup_{i \in L} C_i\right) + \sum_{t \in F \cup \{u\}} b_t \qquad\mbox{as }(F \cup \{u\}) \cap \bigcup_{i \in L} C_i = \varnothing\\
\nonumber&\geq& c + p + \sum_{t \in F} b_t \;=\; c + p + \sum_{t \in F} a_t.
\end{eqnarray}
By (\ref{equality}) and the definition of the seminorms, let $\mathscr{H} \subseteq \mathscr{U}_m$
of cardinality $k+1$ be chosen in such a way that
\[
{\textstyle \frac{1}{2}}(\mu+\nu)\left(\bigcup_{i \in L} C_i \cup \bigcup\mathscr{H} \right) > \pnorm{\nu}{m,k+1,L} - {\textstyle \frac{1}{4}}(p-q).
\]

We claim that $a_t \geq p$ whenever $t \in \bigcup\mathscr{H} \cap
G$. In order to see this, first of all we claim that if $J \subseteq
A$ has cardinality at most $k$, then
\begin{equation}\label{Jineq}
\sum_{t \in J} a_t \leq \sum_{t \in F} a_t.
\end{equation}
Indeed, we have $\card{F\setminus J} \geq \card{J\setminus F}$,
since $\card{J} \leq k = \card{F}$. If $t \in J\setminus F$ then
either $a_t < p$ or $a_t \neq b_t$, which means $a_t \leq p$ by
maximality of $p$. Therefore
\begin{eqnarray*}
\sum_{t \in F} a_t - \sum_{t \in J} a_t &=& \sum_{t \in
F\setminus J} a_t - \sum_{t \in J\setminus F} a_t\\
&\geq& p(\card{F\setminus J}) - p(\card{J\setminus F}) \;\geq\; 0.
\end{eqnarray*}
This finishes the proof of the claim.

Now we can show that $a_t \geq p$ whenever $t \in \bigcup\mathscr{H}
\cap G$. If not, then $a_s < p$ for some $s \in \bigcup\mathscr{H}
\cap G$, meaning $a_s \leq q$. Observe that
\begin{equation}\label{subset}
\bigcup_{i \in L} C_i \cup \bigcup\mathscr{H} \subseteq \left( \bigcup\mathscr{H} \cap G\right) \cup \left(\bigcup\mathscr{H}
\cap C_I\setminus G\right) \cup \left( C_{I,n}\setminus C_I \right) \cup \bigcup_{i \in
L} C_i.
\end{equation}
To see this, it helps to note that
\[
\bigcup\mathscr{H}\setminus \bigcup_{i \in L} C_i \subseteq C_m \setminus \bigcup_{i \in L} C_i \subseteq \bigcap_{i \in H} C_i \setminus \bigcup_{i \in L} C_i = C_{I,n}.
\]
By choice of $m$, $\card \bigcup \mathscr{H} \cap G
\leq k+1$. Hence
\begin{eqnarray*}
\mu\left(\bigcup_{i \in L} C_i \cup \bigcup\mathscr{H} \right)
&\leq& \sum_{t \in F} a_t + a_s + {\textstyle\frac{1}{4}(p - q)} +
{\textstyle\frac{1}{4}(p - q)} + c \quad\mbox{by }(\ref{Gest}), (\ref{nest}), (\ref{Jineq}), (\ref{subset})\\
&\leq& \sum_{t \in F} a_t + q + {\textstyle \frac{1}{2}(p - q)} + c\quad\mbox{since }a_s \leq q\\
&\leq& \pnorm{\nu}{m,k+1,L} - {\textstyle \frac{1}{2}}(p-q) \qquad\mbox{by }(\ref{nuest}).
\end{eqnarray*}
However, this means
\[
{\textstyle \frac{1}{2}}(\mu+\nu)\left(\bigcup_{i \in L} C_i \cup \bigcup\mathscr{H} \right) \leq {\textstyle \frac{1}{2}}\pnorm{\nu}{m,k+1,L} - {\textstyle \frac{1}{4}}(p-q) + {\textstyle \frac{1}{2}}\pnorm{\nu}{m,k+1,L}
\]
which contradicts the choice of $\mathscr{H}$. Therefore $a_t \geq
p$ whenever $t \in \bigcup\mathscr{H} \cap G$. By a similar argument
applied to the $b_t$, we have $b_t \geq p$ whenever $t \in
\bigcup\mathscr{H} \cap G$. Hence we know that $a_t = b_t$ for $t
\in \bigcup\mathscr{H} \cap G$, lest we contradict the maximality of
$p$. It follows that $\bigcup\mathscr{H} \cap G \subseteq F$.
However, this forces
\begin{eqnarray*}
\mu\left(\bigcup_{i \in L} C_i \cup \bigcup\mathscr{H} \right) &\leq& \sum_{t \in F} a_t + {\textstyle\frac{1}{4}(p - q)} +
{\textstyle\frac{1}{4}(p - q)} + c \qquad\mbox{by }(\ref{Gest}), (\ref{nest})\mbox{ and } (\ref{subset})\\
&<& \pnorm{\nu}{m,k+1,L} - {\textstyle \frac{1}{2}}(p-q).
\end{eqnarray*}
Just as above, this contradicts the choice of $\mathscr{H}$.
\end{proof}

\begin{rem}\label{starproof}
Most of Theorem \ref{scattered} follows from Theorem \ref{sccharac}.
Starting with a {\st}-sequence from Lemma \ref{newfamilies}, we can
show directly that $(\dual{C(K)},\weakstar)$ has {\st} with slices.
For $n,k \in \nat$, finite $L \subseteq \nat$ and rational $q
> 0$, define $\mathscr{V}_{n,k,L,q,+}$ to be the family of all
$\weakstar$-open sets
\[
\setcomp{\mu \in \dual{C(K)}}{\mu_+\left(\bigcup_{i \in L} C_i \cup
\bigcup\mathscr{F}\right) > q}
\]
where $\mathscr{F} \subseteq \mathscr{U}_n$ has cardinality $k$.
Define $\mathscr{V}_{n,k,L,q,-}$ accordingly. By using essentially
the same method as that presented above, it can be shown that the
$\mathscr{V}_{n,k,L,q,\pm}$ form a {\st}-sequence. Moreover, if $V
\in \mathscr{V}_{n,k,L,q,\pm}$ then $\dual{C(K)}\setminus V$ is
convex. By the Hahn-Banach Theorem, each such $V$ can be written as
a union of $\weakstar$-open half-spaces. Therefore, we can write
down a {\st}-sequence for $(\dual{C(K)},\weakstar)$, the elements of
which being families of half-spaces. What we lose here is the fact
that the norm in Theorem \ref{scattered} is a lattice norm, which is
why we give the proof as is.
\end{rem}

\section{Topological properties of {\st} and examples}\label{topology}

In this section, we explore the properties of {\st} and see how it
compares with related concepts in the literature. In particular,
under the continuum hypothesis (CH), we provide an example of a
compact scattered non-Gruenhage space $K$ having {\st}. This means
that Theorem \ref{scattered} does not follow from existing results
such as Theorem \ref{smithgru}.

A topological space $X$ is said to have a {\em $\Gdelta$-diagonal}
if its diagonal
\[
\setcomp{(x,x)}{x \in X}
\]
is a $\Gdelta$ set in $X^2$. This concept has been studied
extensively in general metrisation theory; see, for example
\cite[Section 2]{gru:84}. It is easy to show  that $X$ has a
$\Gdelta$-diagonal if and only if the admits a sequence
$(\mathscr{G}_n)_{n=1}^\infty$ of open covers of $X$, such that
given $x,y \in X$, there exists $n$ with the property that $\{x,y\}
\cap U$ is at most a singleton for all $U \in \mathscr{G}_n$
\cite[Theorem 2.2]{gru:84}. Equivalently, if we consider the `stars'
\[
\str(x,n) = \bigcup\setcomp{U \in \mathscr{G}_n}{x \in U},
\]
then $\bigcap_{n=1}^\infty \str(x,n) = \{x\}$ for every $x \in X$.
In keeping with previous notation, we call such a sequence a {\em
$\Gdelta$-diagonal sequence}. Compact spaces with
$\Gdelta$-diagonals are metrisable (cf.\ \cite[Theorem
2.13]{gru:84}), so {\st} is evidently a strict generalisation of the
$\Gdelta$-diagonal property. In some cases, it is possible to reduce
problems about {\st} to the $\Gdelta$-diagonal case; see Theorem
\ref{ctblecpt} and Proposition \ref{o-spacenostar}, and also the
partitioning of $K$ into the $C_I$ in the proof of Theorem
\ref{scattered}.

Next, we compare {\st} with Gruenhage's property.

\begin{prop}\label{gruimpliesstar}
If $X$ is Gruenhage then it has {\st}.
\end{prop}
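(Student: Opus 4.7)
The plan is to start from a Gruenhage sequence $(\mathscr{U}_n)_{n=1}^\infty$ with associated sets $(R_n)_{n=1}^\infty$, and build a {\st}-sequence from it by interleaving the $\mathscr{U}_n$ with the singleton families $\{R_n\}$. Note that if $\mathscr{U}_n$ contains two distinct elements $U,V$, then $R_n = U \cap V$ is open, so $\{R_n\}$ is a legitimate family of open subsets of $X$; if $\card{\mathscr{U}_n} \leq 1$ we can simply omit $\{R_n\}$ (or take $R_n = \varnothing$). Let $(\mathscr{V}_k)_{k=1}^\infty$ be the enumeration of this enriched collection.

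Given distinct $x,y \in X$, I would use condition (1) of Definition \ref{gruenhage} to pick $n$ and $U \in \mathscr{U}_n$ such that $\{x,y\} \cap U$ is a singleton, and assume without loss of generality that $x \in U$ and $y \notin U$. Now split into two cases. In the first case, no $V \in \mathscr{U}_n$ contains both $x$ and $y$; then $\mathscr{U}_n$ itself fulfills both requirements of Definition \ref{star} for the pair $(x,y)$, since $x \in U \subseteq \bigcup \mathscr{U}_n$ and $\{x,y\} \cap V$ is at most a singleton for every $V \in \mathscr{U}_n$.

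In the remaining case, some $V \in \mathscr{U}_n$ contains both $x$ and $y$. Since $y \notin U$, necessarily $V \neq U$, so by Definition \ref{gruenhage} (2) we have $V \cap U = R_n$, and in particular $x \in R_n$. On the other hand $R_n \subseteq U$ (as $R_n = U \cap V$), so $y \notin R_n$. Therefore $\{x,y\} \cap R_n = \{x\}$, which shows that the singleton family $\{R_n\}$ meets both requirements of Definition \ref{star} for $(x,y)$. In either case the new sequence $(\mathscr{V}_k)$ separates $(x,y)$ in the sense required.

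There is no real obstacle here: the one place where the Gruenhage families alone could fail the {\st} condition is precisely when both points sit inside a common element of $\mathscr{U}_n$, and the structure of the $R_n$ as the common pairwise intersection forces exactly one of the two points into $R_n$, which is the crux of the argument. The only matter of bookkeeping is to handle trivially the degenerate cases $\card{\mathscr{U}_n} \leq 1$, where $R_n$ is undefined but where $\mathscr{U}_n$ is already unproblematic.
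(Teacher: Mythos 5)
Your proof is correct and follows essentially the same route as the paper: use the Gruenhage families $\mathscr{U}_n$ together with the singleton families $\{R_n\}$, observing that if some $V\in\mathscr{U}_n$ contains both points then the separating set $U$ forces exactly one of them into $R_n=U\cap V$. The only cosmetic difference is that you split cases on whether some member of $\mathscr{U}_n$ contains both points, while the paper splits on whether either point lies in $R_n$; your explicit remarks that $R_n$ is open and that the degenerate case $\card{\mathscr{U}_n}\leq 1$ is harmless are welcome details the paper leaves implicit.
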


\begin{proof}
If $X$ is Gruenhage then let $(\mathscr{U}_n)_{n=1}^\infty$ and
$R_n$ be as in Definition \ref{gruenhage}. Let $\mathscr{V}_n =
\{R_n\}$ for each $n$. Given distinct $x,y \in X$, there exists $n$
and $U \in \mathscr{U}_n$, such that $\{x,y\} \cap U$ is a
singleton. If $x \in R_n$ then $y \notin R_n$ and it is true that
$\{x,y\} \cap U = \{x\}$ for every $U \in \mathscr{V}_n$, because
$\mathscr{V}_n$ is a singleton. Likewise if $y \in R_n$. So we
assume now that $x,y \notin R_n$. Now it is true that $\{x,y\} \cap
V$ is at most a singleton for every $V \in \mathscr{U}_n$, since if
$y \in V$ then $V \neq U$, and if $x \in V$ then $x \in U \cap V =
R_n$.
\end{proof}

There are an abundance of compact spaces which are Gruenhage, but
non-descriptive and so quite far from being metrisable; see
\cite[Corollary 17]{smith:09} or Theorem \ref{treestar} and
subsequent remarks, below. In Example \ref{starnon-gru}, we
show that under CH there exists a compact, scattered non-Gruenhage
space that has {\st}. Now we see that {\st} implies fragmentability.

\begin{prop}\label{stfrag}
If $X$ has {\st} then $X$ is fragmentable.
\end{prop}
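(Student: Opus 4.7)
The plan is to pass from the {\st}-sequence directly to the definition of fragmentability quoted in the introduction. Let $(\mathscr{U}_n)_{n=1}^\infty$ be a {\st}-sequence for $X$. For each $n$, fix a well-ordering $\mathscr{U}_n = \{U_{n,\xi} : \xi < \lambda_n\}$ (where $\lambda_n$ is the order type of $\mathscr{U}_n$) and form the cumulative unions
\[
V_{n,\xi} \;=\; \bigcup_{\eta \leq \xi} U_{n,\eta}, \qquad \xi < \lambda_n.
\]
Each $(V_{n,\xi})_{\xi < \lambda_n}$ is then an increasing well-ordered family of open subsets of $X$, of precisely the form that appears in Ribarska's characterisation of fragmentability recalled earlier in the paper.

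To verify the point-separation requirement, take distinct $x,y \in X$ and use {\st} to pick $n \in \nat$ so that $\{x,y\} \cap \bigcup\mathscr{U}_n$ is non-empty while $\{x,y\} \cap U$ is at most a singleton for every $U \in \mathscr{U}_n$. Let $\xi < \lambda_n$ be the least ordinal with $\{x,y\} \cap U_{n,\xi}$ non-empty; such a $\xi$ exists by the first condition. Minimality forces $\{x,y\} \cap U_{n,\eta} = \varnothing$ for all $\eta < \xi$, so that
\[
\{x,y\} \cap V_{n,\xi} \;=\; \{x,y\} \cap U_{n,\xi}.
\]
The right-hand side is non-empty by the choice of $\xi$ and at most a singleton by the second defining property of {\st}, so it is in fact exactly a singleton, which is what the definition of fragmentability demands.

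The argument is purely formal and I do not foresee any genuine obstacle: everything reduces to the observation that a cumulative union inherits a singleton intersection from its last summand, which is precisely what the minimal choice of $\xi$ secures. The only mild design decision is that the {\st}-sequence already separates pairs \emph{and} provides a witness in $\bigcup\mathscr{U}_n$, allowing the minimal index to be defined in the first place.
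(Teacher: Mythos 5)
Your argument is correct and is essentially identical to the one in the paper: both well-order each $\mathscr{U}_n$, form the cumulative unions, and observe that at the minimal index the intersection of $\{x,y\}$ with the cumulative union coincides with its intersection with the last summand, which is a singleton by the two clauses of {\st}. No issues.
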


\begin{proof}
Let $X$ have a {\st}-sequence $(\mathscr{U}_n)_{n=1}^\infty$. We
well order each $\mathscr{U}_n$ as $(U^n_\xi)_{\xi < \lambda_n}$.
Now define $V^n_\alpha = \bigcup_{\xi \leq \alpha} U^n_\xi$, for
$\alpha < \lambda_n$. We claim that given distinct $x,y \in X$,
there exists $n$ and $\alpha < \lambda_n$ such that $\{x,y\} \cap
V^n_\alpha$ is a singleton. As explained in the Introduction, this
is enough to give fragmentability. Indeed, take $n \in \nat$ with
the properties given in Definition \ref{star}, and pick the least
$\alpha < \lambda_n$ such that $\{x,y\} \cap U^n_\alpha$ is a
singleton. Then $\{x,y\} \cap U^n_\xi$ must be empty for all $\xi <
\alpha$, thus
$$
\{x,y\} \cap V^n_\alpha \;=\; \{x,y\} \cap U^n_\alpha
$$
is a singleton.
\end{proof}

Theorem \ref{ctblecpt} below is a generalisation of a result of
Chaber (cf.\ \cite[Theorem 2.14]{gru:84}), which states that
countably compact spaces with $\Gdelta$-diagonals are compact (and
thus metrisable). It allows us to glean a few more topological
consequences of the {\st} property. As preparation, fix an open
cover $\mathscr{V}$ of a countably compact (non-empty) space $X$.
Suppose that $X$ has a {\st}-sequence
$(\mathscr{U}_n)_{n=1}^\infty$, with $C_n = \bigcup \mathscr{U}_n$
for each $n$. Define
\[
\mathscr{A}_X = \setcomp{I \subseteq \nat}{X\setminus \bigcup_{n \in I}C_n \neq \varnothing}.
\]
Clearly, $\mathscr{A}_X$ is a hereditary family of subsets of
$\nat$. Moreover, it is compact in the pointwise topology. Indeed,
if $J \notin \mathscr{A}_X$, then by the countable compactness of
$X$, we can find finite $G \subseteq J$ such that $G \notin
\mathscr{A}_X$. It follows that
$\mathbb{P}(\nat)\setminus\mathscr{A}_X$ is open. Furthermore,
$\varnothing \in \mathscr{A}_X$ because $X$ is non-empty, so
$\mathscr{A}_X$ is also non-empty. From these facts, we deduce that
$\mathscr{A}_X$ admits an element that is maximal with respect to
inclusion.

\begin{thm}\label{ctblecpt}
If $X$ is countably compact and has {\st} then $X$ is compact.
\end{thm}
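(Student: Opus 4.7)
The plan is to use the preparation paragraph to extract a compact ``nucleus'' $Y \subseteq X$ via Chaber's theorem, and then to deduce compactness of $X$ by iteration.

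First I would pick $I \in \mathscr{A}_X$ that is maximal with respect to inclusion, and set $Y = X \setminus \bigcup_{n \in I}C_n$; this is nonempty by the very definition of $\mathscr{A}_X$, and maximality of $I$ forces $I \cup \{n\} \notin \mathscr{A}_X$ for every $n \notin I$, i.e.\ $Y \subseteq C_n$ whenever $n \notin I$.

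Next I would show that $Y$ carries a $\Gdelta$-diagonal. For each $n \notin I$ the family $\mathscr{G}_n = \{U \cap Y : U \in \mathscr{U}_n\}$ is an open cover of $Y$, since $Y \subseteq C_n = \bigcup \mathscr{U}_n$. Given distinct $y_1, y_2 \in Y$, the {\st}-property of $X$ supplies an index $n$ with $\{y_1,y_2\} \cap C_n \neq \varnothing$ and $\{y_1,y_2\} \cap U$ at most a singleton for every $U \in \mathscr{U}_n$; since $y_1, y_2 \in Y$ meets no $C_n$ with $n \in I$, this separating $n$ must lie outside $I$, so $(\mathscr{G}_n)_{n \notin I}$ is a $\Gdelta$-diagonal sequence on $Y$. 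Since $Y$ is closed in the countably compact space $X$, it is itself countably compact, and Chaber's theorem (cf.\ \cite[Theorem 2.14]{gru:84}) gives that $Y$ is compact.

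To finish, given any open cover $\mathscr{V}$ of $X$ I would extract a finite $\mathscr{V}_0 \subseteq \mathscr{V}$ covering $Y$, set $W_0 = \bigcup \mathscr{V}_0$, and observe that $X \setminus W_0$ is closed, countably compact, and contained in $\bigcup_{n \in I}C_n$; countable compactness applied to this countable open cover yields a finite $F \subseteq I$ with $X \setminus W_0 \subseteq \bigcup_{n \in F}C_n$. I would then iterate the whole construction on the closed, countably compact, {\st}-inheriting subspace $X \setminus W_0$, producing transfinitely a decreasing sequence of closed subsets $X_\alpha$ together with finite subfamilies $\mathscr{V}_\alpha \subseteq \mathscr{V}$; at countable limit stages, countable compactness of $X$ keeps the intersection of the previously constructed $X_\beta$ nonempty. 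The main obstacle is to argue that this recursion terminates at a countable ordinal $\xi$, after which $\bigcup_{\alpha < \xi} \mathscr{V}_\alpha$ is a countable subcover of $\mathscr{V}$ and countable compactness of $X$ refines it to the desired finite subcover. I expect termination to follow from a Cantor-Bendixson analysis of the compact metrizable hereditary family $\mathscr{A}_X \subseteq 2^{\nat}$, whose rank is countable and strictly drops at each iteration.
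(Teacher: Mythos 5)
Your proposal follows the paper's own proof in all essentials: the same maximal element of $\mathscr{A}_X$, the same compact nucleus $Y$ obtained from Chaber's theorem via the $\Gdelta$-diagonal sequence induced on $Y$ by the indices outside the maximal set, and the same transfinite peeling-off of finitely many members of $\mathscr{V}$ at each stage, with countable compactness handling limit stages and supplying the final finite subcover. Up to the termination step the argument is correct and complete (the intermediate observation that $X\setminus W_0$ is covered by finitely many $C_n$ with $n\in F\subseteq I$ is true but is never used).

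The one genuine gap is the termination argument, which you rightly identify as the main obstacle but resolve with the wrong mechanism. A Cantor--Bendixson analysis of $\mathscr{A}_X$ does not work: passing from $\mathscr{A}_{X_\alpha}$ to $\mathscr{A}_{X_{\alpha+1}}$ only guarantees a \emph{proper} closed subset, and a proper closed subset need not have smaller Cantor--Bendixson rank; worse, $\mathscr{A}_X$ may have a non-empty perfect kernel, in which case the derivative sequence never reaches $\varnothing$ and the rank is not a usable decreasing invariant. What does work, and is what the paper uses, is cruder. Track a maximal $M_\alpha\in\mathscr{A}_{X_\alpha}$ chosen afresh at \emph{each} stage (not a single $I$ fixed at the outset); since $X_{\alpha+1}\setminus\bigcup_{n\in M_\alpha}C_n\subseteq Y_\alpha\setminus\bigcup\mathscr{V}_\alpha=\varnothing$, we get $M_\alpha\in\mathscr{A}_{X_\alpha}\setminus\mathscr{A}_{X_{\alpha+1}}$, so the $\mathscr{A}_{X_\alpha}$ form a strictly decreasing transfinite chain of closed subsets of the separable metric space $\powerset{\nat}$. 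Any such chain has countable length: the complements form a strictly increasing chain of open sets, and assigning to each $\alpha$ a basic open set contained in the $(\alpha+1)$-st but meeting the new points gives an injection into a countable base. With the termination step repaired in this way, your proof is the paper's.
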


\begin{proof}
Fix an open cover $\mathscr{V}$ of $X$ and {\st}-sequence
$(\mathscr{U}_n)_{n=1}^\infty$ as above. We define a decreasing
transfinite sequence of countably compact subspaces $X_\alpha$ of
$X$, together with maximal $M_\alpha \in \mathscr{A}_{X_\alpha}$ and
finite $\mathscr{F}_\alpha \subseteq \mathscr{V}$, such that
\begin{enumerate}
\item $X_\alpha = X\setminus\bigcup_{\xi < \alpha}\bigcup\mathscr{F}_\xi$;
\item $M_\xi \notin \mathscr{A}_{X_\alpha}$ whenever $\xi < \alpha$.
\end{enumerate}
To begin, set $X_0 = X$. Given $X_\alpha$, we take some maximal
$M_\alpha \in \mathscr{A}_{X_\alpha}$ and set  $Y =
X_\alpha\setminus \bigcup_{n \in M_\alpha} C_n$. We claim that
$(\mathscr{U}_n)_{n \in \nat\setminus M_\alpha}$ is a
$\Gdelta$-diagonal sequence for $Y$. Indeed, the maximality of
$M_\alpha$ implies that $Y \subseteq C_n$ whenever $n \in
\nat\setminus M_\alpha$. If $x,y \in Y$ then by {\st}, there exists
$n$ such that $\{x,y\} \cap C_n$ is non-empty, and $\{x,y\} \cap U$
is at most a singleton for all $U \in \mathscr{U}_n$. By definition,
$Y \cap C_k$ is empty whenever $k \in M_\alpha$, so necessarily $n
\in \nat\setminus M_\alpha$. Our claim is proved.

By Chaber's result, $Y$ is compact. Therefore there exists a finite
set $\mathscr{F}_\alpha \subseteq \mathscr{V}$, such that
\[
X_\alpha\setminus\bigcup_{n \in M_\alpha} C_n = Y \subseteq
\bigcup\mathscr{F}_\alpha.
\]
Define $X_{\alpha+1} = X_\alpha^\prime =
X_\alpha\setminus\bigcup\mathscr{F}_\alpha$. We have (1) immediately
and (2) follows because $M_\alpha \notin \mathscr{A}_{X_{\alpha+1}}$
and $\mathscr{A}_{X_{\alpha+1}} \subseteq \mathscr{A}_{X_\alpha}$.
If $X_{\alpha+1}$ is empty then we stop the recursion. If $\lambda$
is a countable limit ordinal and $X_\alpha$ is non-empty for all
$\alpha < \lambda$, set $X_\lambda = \bigcap_{\alpha < \lambda}
X_\alpha$. (1) and (2) follow. By countable compactness, $X_\lambda$
is also non-empty.

This process has to stop at a countable (successor) stage, because
$(\mathscr{A}_{X_\alpha})$ is a strictly decreasing family of closed
subsets of the separable metric space $\mathbb{P}(\nat)$. Thus,
$X_{\alpha+1}$ is empty for some $\alpha < \wone$. By (1), we get
\[
X \subseteq \bigcup_{\xi \leq \alpha}\bigcup\mathscr{F}_\xi
\]
and so $X$ is covered by $\bigcup_{\xi \leq \alpha}\mathscr{F}_\xi$.
By a final application of countable compactness, we extract from
this a finite subcover.
\end{proof}

The next result generalises \cite[Corollary 4.3]{or:04} from
descriptive spaces to spaces with {\st}.

\begin{cor}\label{ctbletight}
If $L$ is locally compact and has {\st} then $L \cup \{\infty\}$ is countably tight and
sequentially closed subsets of $L \cup \{\infty\}$ are closed.
\end{cor}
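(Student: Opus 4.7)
The strategy is to first establish countable tightness of $L \cup \{\infty\}$ by two applications of Theorem~\ref{ctblecpt}, and then to use this together with a first-countability analysis on the countable subset delivered by tightness in order to upgrade sequentially closed to closed. Countable tightness will be proved separately at interior points $p \in L$ and at $p = \infty$.

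For countable tightness at $p \in L$, I would use local compactness of $L$ to fix a compact neighbourhood $K$ of $p$, reducing to the assertion that every compact space with {\st} is countably tight at every point. Assuming for contradiction that $p \in \overline{A}$ in $K$ while no countable $B \subseteq A$ has $p \in \overline{B}$, I would adapt the transfinite recursion in the proof of Theorem~\ref{ctblecpt}: build a strictly descending chain $(Y_\alpha)$ of closed subspaces of $K$ containing $p$, selecting at each stage a maximal $M_\alpha \in \mathscr{A}_{Y_\alpha}$ so that the residual $Y_\alpha \setminus \bigcup_{n \in M_\alpha} C_n$ carries the $G_\delta$-diagonal sequence $(\mathscr{U}_n)_{n \notin M_\alpha}$ and hence (by Chaber's theorem) is compact metric. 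The metric structure on the residual makes it possible either to extract a countable witnessing subset of $A$ directly (contradicting the assumption) or to peel off an open set disjoint from $p$ together with a portion of $A$, yielding $Y_{\alpha+1}$. The chain terminates at a countable ordinal because $(\mathscr{A}_{Y_\alpha})$ is a strictly descending family of closed subsets of the separable metric space $\mathcal{P}(\nat)$, producing the desired contradiction.

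For countable tightness at $\infty$, suppose $\infty \in \overline{A}$ while every countable $B \subseteq A$ has $\overline{B}^L$ compact in $L$. The set $F = \overline{A \cap L}^L$ inherits {\st} as a subspace of $L$. Using the case just treated, every countable $D \subseteq F$ lies in $\overline{\tilde A}^L$ for some countable $\tilde A \subseteq A$; the hypothesis then forces $\overline{\tilde A}^L$ compact, so $D$ has an accumulation point in $F$. Hence $F$ is countably compact, and Theorem~\ref{ctblecpt} gives $F$ compact, contradicting $\infty \in \overline{A \cap L} \subseteq \overline{F}$.

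For sequentially closed equals closed, given $F \subseteq L \cup \{\infty\}$ sequentially closed and $p \in \overline{F} \setminus F$, countable tightness furnishes a countable $B \subseteq F$ with $p \in \overline{B}$; the goal is to produce a sequence in $B$ converging to a point of $\overline{F} \setminus F$. When $p = \infty$, one uses local compactness to construct a countable cofinal family $\{B \cap K_G : G \in [B]^{<\omega}\}$ of relatively compact subsets of $B$, where $K_G = \bigcup_{b \in G} K_b$ is the union of pre-chosen compact neighbourhoods $K_b$ of the elements $b \in G$; this makes $\infty$ first-countable in the subspace $B \cup \{\infty\}$, and a subsequence of $B$ converges to $\infty \notin F$, violating sequential closedness. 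When $p \in L$, one works in a compact neighbourhood $K$ of $p$: by regularity of $K$ and the fact that $p \notin \overline{B \setminus U}$ for each open $U \ni p$, combined with Lemma~\ref{newfamilies} (in particular the closure of the {\st}-families under finite intersections) applied to the countable combinatorics of $B$, one produces a countable cofinal family of open neighbourhoods of $p$ in the subspace $B \cup \{p\}$, and hence a sequence in $B$ converging to $p$. The main obstacle is this last construction in the case $p \in L$: possible failure of first-countability of $K$ at $p$ must be circumvented by carefully exploiting the {\st}-structure together with the countability of $B$.
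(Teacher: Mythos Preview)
Your plan is much more elaborate than the paper's, and the second half contains a genuine gap.

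The paper begins with an observation you never make: $L \cup \{\infty\}$ itself has {\st}, because one can simply adjoin the singleton family $\{L\}$ to a {\st}-sequence for $L$ in order to separate $\infty$ from every point of $L$. This is what allows both assertions to be treated globally in the compactification, with no case split between $p \in L$ and $p = \infty$.

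For countable tightness the paper applies Theorem~\ref{ctblecpt} once, in the standard way: given $p \in \overline{A}$ in $L \cup \{\infty\}$, the set
\[
Y \;=\; \bigcup\bigl\{\overline{B} : B \subseteq A \text{ countable}\bigr\}
\]
is countably compact (any sequence in $Y$ lies in a single $\overline{B}$, which is compact) and has {\st}, so by Theorem~\ref{ctblecpt} it is compact and therefore closed; since $A \subseteq Y$ we get $p \in Y$, hence $p \in \overline{B}$ for some countable $B$. Your scheme of re-running the transfinite recursion inside a compact neighbourhood of $p$, and then bootstrapping from $L$ to $\infty$, is unnecessary, and the ``peel off an open set disjoint from $p$'' step is not clearly well-defined (the residual in the proof of Theorem~\ref{ctblecpt} contains $p$, so the finite cover you obtain covers $p$ as well).

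For the second assertion the paper invokes Proposition~\ref{stfrag}: $L \cup \{\infty\}$ is fragmentable, and compact fragmentable spaces are sequentially compact. If $F$ is sequentially closed then every sequence in $F$ has a subsequence converging in $L \cup \{\infty\}$, and by sequential closedness the limit lies in $F$; hence $F$ is countably compact, has {\st}, and is therefore compact by Theorem~\ref{ctblecpt}, so closed. Your attempt to manufacture first countability of $B \cup \{p\}$ at $p$ is the wrong route. At $p = \infty$ your proposed base $\{(B \cup \{\infty\}) \setminus K_G : G \in [B]^{<\omega}\}$ fails as soon as $B$ accumulates inside some compact $C \subseteq L$, since then $B \cap C$ is infinite and not contained in any $K_G$. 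At $p \in L$ you yourself flag the obstacle, and Lemma~\ref{newfamilies} is of no help here: it refines {\st}-sequences but does not produce countable neighbourhood bases. The missing idea is precisely fragmentability and the sequential compactness it provides.
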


\begin{proof}
The first assertion follows directly from Theorem \ref{ctblecpt} and
the second follows from Proposition \ref{stfrag} and the fact that
compact fragmentable spaces are sequentially compact (see
\cite[Corollary 2.7]{ribarska:87} and \cite[Lemma
2.1.1]{fabian:97}). Notice that if $L$ is any locally compact space
with {\st} then its 1-point compactification $L \cup \{\infty\}$ has
{\st} also. All we need to do is adjoin to any {\st}-sequence for
$L$ the singleton family $\{L\}$, which separates all points in $L$
from $\infty$.
\end{proof}

Concerning stability properties of {\st} under mappings, we have
the next result.

\begin{prop}\label{ctsimage}
If $K$ is a scattered compact space with {\st} and
$\mapping{\pi}{K}{M}$ is a continuous, surjective map then $M$ has
{\st}.
\end{prop}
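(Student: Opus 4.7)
My plan is to construct a (*)-sequence on $M$ directly from one on $K$, using the closedness of $\pi$ as the bridge between the two spaces.

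First, I would note that $M$ is scattered and compact. Compactness is immediate from the continuity of $\pi$. Scatteredness follows from a standard argument: if $M$ contained a non-empty, dense-in-itself closed subset $P$, then $C(P)$ would be a quotient of $C(M) \hookrightarrow C(K)$, forcing a copy of a non-scattered $C(L)$ inside the Asplund space $C(K)$ (since $K$ scattered gives $C(K)$ Asplund), which is impossible.

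Next, fix a (*)-sequence $(\mathscr{U}_n)_{n=1}^\infty$ on $K$, refined to satisfy the enhanced properties of Lemma \ref{newfamilies}. Since $\pi$ is continuous from a compact space to a Hausdorff space, $\pi$ is a closed map, so for any open $U \subseteq K$ the set
\[
V_U \;=\; M \setminus \pi(K \setminus U) \;=\; \{y \in M : \pi^{-1}(y) \subseteq U\}
\]
is open in $M$. My candidate (*)-sequence on $M$ consists of the families $\mathscr{V}_n = \{V_U : U \in \mathscr{U}_n\}$.

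Verification of condition (ii) of Definition \ref{star} is the easy part. Given distinct $x,y \in M$, I would choose representatives $x' \in \pi^{-1}(x)$, $y' \in \pi^{-1}(y)$ (distinct since the fibres are disjoint), and apply (*) on $K$ to $\{x',y'\}$ to obtain some $n$ witnessing (*). For this $n$, if both $x,y$ lay in some $V_U \in \mathscr{V}_n$, then $\pi^{-1}(x) \cup \pi^{-1}(y) \subseteq U$, whence $\{x',y'\} \subseteq U$, contradicting the (*) condition on $K$ for $\{x',y'\}$. Thus (ii) holds for $\mathscr{V}_n$.

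The hard part is condition (i): I need the chosen $n$ to also produce some $U \in \mathscr{U}_n$ with $\pi^{-1}(x) \subseteq U$ or $\pi^{-1}(y) \subseteq U$, whereas (*) on $K$ only gives that a single representative belongs to some such $U$. To overcome this, I would exploit the enhanced (*)-sequence: since $K = C_1$ and the $\mathscr{U}_\sigma$ are closed under finite intersections across families, I can partition $K$ into the cells $C_I = \bigcap_{i \in I} C_i \setminus \bigcup_{i \notin I} C_i$ as in the proof of Theorem \ref{scattered}, and classify fibres by the finite set of $I$'s they meet. For appropriately chosen representatives drawn from a common cell $C_I$, Lemma \ref{newfamilies}(3) applied to the finite set of representatives produces an $m$ separating them via $\mathscr{U}_m$; using the compactness and scatteredness of the fibres, together with an induction on Cantor--Bendixson rank, one then promotes this separation of representatives to the desired covering of an entire fibre by a single $U \in \mathscr{U}_m$. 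The main obstacle is this last promotion step, which requires a careful rank-inductive argument and is the technical heart of the proof.
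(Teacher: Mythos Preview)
Your approach differs markedly from the paper's. The paper gives a two-line proof that routes entirely through Banach space theory: by Theorem~\ref{scattered}, $\dual{C(K)}$ admits a strictly convex dual norm $\normdot$; this transfers to a strictly convex dual norm on $\dual{C(M)}$ via the quotient norm $\trinorm{\nu}=\inf\{\norm{\mu}:\dual{T}\mu=\nu\}$, where $T(f)=f\circ\pi$; and then $M$ has {\st} by Theorem~\ref{scattered} again (or just Proposition~\ref{nec}). The paper then sketches a second, more explicit route, but this too lives in $\dual{C(K)}$: it uses $\weakstar$-compact convex sets $D_{n,q,L}$ of probability measures on $K$, and exploits the fact that the fibre of $\dual{T}$ over $\delta_t$ is the \emph{convex} set of probability measures supported on $\pi^{-1}(t)$, so that separation can be achieved with half-spaces via Hahn--Banach.

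Your purely topological route via the small images $V_U = M\setminus\pi(K\setminus U)$ is natural, and your verification of condition~(2) of Definition~\ref{star} for $\mathscr{V}_n$ is correct. However, the ``promotion step'' you flag for condition~(1) is a genuine gap, not a detail. For the same index $m$ that separates your chosen representatives, you need some $U\in\mathscr{U}_m$ that swallows an \emph{entire} fibre $\pi^{-1}(x)$ or $\pi^{-1}(y)$. Nothing in Lemma~\ref{newfamilies} or in the $C_I$-decomposition delivers this: those tools control only finite subsets of $K$, whereas fibres are arbitrary compact scattered subsets. The proposed Cantor--Bendixson induction is not spelled out, and it is hard to see what inductive hypothesis would let you pass from covering the derived set of a fibre to covering the whole fibre by a \emph{single} member of some $\mathscr{U}_m$, while simultaneously keeping condition~(2) intact for that $m$. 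The paper's explicit outline sidesteps exactly this obstacle by replacing ``point in a fibre'' by ``probability measure supported on a fibre'' and replacing ``single open set containing the fibre'' by ``convex set of measures meeting the fibre''; convexity is what makes the argument close, and it has no analogue in your $V_U$ construction.
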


\begin{proof}
If $K$ has {\st} then by Theorem \ref{scattered}, $\dual{C(K)}$
admits a strictly convex dual norm $\normdot$. If we define
$\mapping{T}{C(M)}{C(K)}$ by $T(f) = f \circ \pi$, it is standard to
check that
\[
\trinorm{\nu} = \inf\setcomp{\norm{\mu}}{\dual{T}(\mu) = \nu}
\]
defines a strictly convex dual norm on $\dual{C(M)}$.
Therefore $M$ has {\st}, again by Theorem \ref{scattered}.
\end{proof}

The proof above is concise and straightforward, but also utterly
opaque, as it leaves the reader with no idea of how to construct a
{\st}-sequence on $M$ in terms of a {\st}-sequence on $K$. We
outline a second approach to proving Proposition \ref{ctsimage},
which we include because we believe it gives the reader more idea of
what is going on. The dual map $S = \dual{T}$ above is a natural
extension of $\pi$ if we identify points in $K$ and $M$ with their
Dirac measures in $\dual{C(K)}$ and $\dual{C(M)}$, respectively. Set
\[
\Sigma = \setcomp{\mu \in \dual{C(K)}}{\mu \mbox{ is positive and
}\pnorm{\mu}{1}=1}.
\]
If $t \in M$ and $\mu \in \Sigma$ then $S(\mu) = t$ if and only if
$\supp \mu \subseteq \pi^{-1}(t)$. Given a {\st}-sequence
$(\mathscr{U}_n)_{n=1}^\infty$ on $K$ with the properties of Lemma
\ref{newfamilies}, together with the unions $C_n$, define the
$\weakstar$-compact and convex sets
\[
D_{n,q,L} = \setcomp{\mu \in \Sigma}{\mu\left(\bigcup_{i \in L} C_i
\cup U\right) \leq q\mbox{ for all }U \in \mathscr{U}_n}
\]
where $n \in \nat$, $q \in (0,1) \cap \rat$ and $L \subseteq \nat$
is finite. The $D_{n,q,L}$ should be compared to the seminorms
$\pnormdot{n,k,L}$ in the proof of Theorem \ref{scattered}. Given
distinct $s, t \in M$ and $\mu, \nu \in \Sigma$ in $S^{-1}(s)$ and
$S^{-1}(t)$ respectively, by following the proof of Theorem
\ref{scattered}, we can find $n$ and $q$ and $L$ such that
$\frac{1}{2}(\mu+\nu) \in D_{n,q,L}$, but $\{\mu,\nu\} \cap
D_{n,q,L}$ is at most a singleton. There is less to consider in this
case because as the supports of $\mu$ and $\nu$ are necessarily
disjoint, the set $F$ in the proof of Theorem \ref{scattered} is
empty. This is why we only need to consider individual elements of
$\mathscr{U}_n$ in the definition of the $D_{n,q,L}$, rather than
finite subsets of $\mathscr{U}_n$ as in the definition of the
$\pnormdot{n,k,L}$.

By appealing to compactness and convexity, it is possible to select
a finite set $G$ of triples $(n,q,L)$ with the property that if we
consider the intersection $D_G = \bigcup_{(n,q,L) \in G} D_{n,q,L}$,
then $D_G \cap S^{-1}(\frac{1}{2}(s+t))$ is non-empty, but either
$D_G \cap S^{-1}(s)$ is empty, or $D_G \cap S^{-1}(t)$ is empty.
Equivalently, $\frac{1}{2}(s+t) \in S(D_G)$, but $\{s,t\} \cap
S(D_G)$ is at most a singleton. The set $S(D_G)$ is
$\weakstar$-compact and convex, so the complement
$\dual{C(M)}\setminus S(D_G)$ can be written as the union of a
family $\mathscr{V}_G$ of $\weakstar$-open halfspaces of
$\dual{C(M)}$. From what we know, it can be easily verified that the
families $\mathscr{V}_G$, as $G$ ranges over all finite subsets of
triples $(n,q,L)$, induce a {\st}-sequence on $M$.

Now we move on to examples. We are chiefly interested in exploring
{\st}, Gruenhage's property and the gap between them. Given that
descriptive spaces are Gruenhage and spaces with {\st} are
fragmentable, we shall confine our attention to spaces that are
fragmentable but non-descriptive.

The first thing to point out is that {\st} is not equivalent to
fragmentability, because $\wone$ is scattered (hence fragmentable),
but does not have {\st}. That $\wone$ does not have {\st} is clear,
either directly from Corollary \ref{ctbletight}, or from Theorem
\ref{scattered} and \cite[Th\'eor\`eme 3]{tal:86}, which we
mentioned in the Introduction. Any locally compact space having
{\st} necessarily has a countably tight 1-point compactification,
but this condition is not sufficient. Hereafter, all of our examples
of locally compact spaces without {\st} have countably tight 1-point
compactifications.

Next, we consider trees. A {\em tree} $(T,\leq)$ is a partially
ordered set with the property that given any $t \in T$, its set of
predecessors $\setcomp{s \in T}{s \leq t}$ is well ordered. The tree
order induces a natural locally compact, scattered {\em interval
topology}. To render this topology Hausdorff, we shall only consider
trees $T$ with the property that every non-empty totally ordered
subset of $T$ has at most one minimal upper bound. An antichain is a
subset of $T$, no two distinct elements of which are comparable. For
further definitions and discussions about trees, and their role in
renorming theory, we refer the reader to
\cite{haydon:95,haydon:99,smith:06,smith:09,st:10,tod:84}.

If $P$ and $Q$ are partially ordered sets then we say that a map
$\mapping{\rho}{P}{Q}$ is {\em strictly increasing} if $\rho(x) <
\rho(y)$ whenever $x < y$. If such a map exists then we write $P
\preccurlyeq Q$. In \cite[Definition 5]{smith:06}, the second-named
author introduced a totally ordered set $Y$ to address the problem
of when $\dual{C_0(T)}$ admits a strictly convex dual norm. We
remark of $Y$ that $\real \preccurlyeq Y$, $Y^\alpha \preccurlyeq Y$
for all $\alpha < \wone$, where $Y^\alpha$ is ordered
lexicographically, and finally $Y$ contains no uncountable, well
ordered subsets \cite[Section 4]{smith:06}. By combining Theorem
\ref{scattered} with \cite[Corollary 17]{smith:09}, we obtain the
next result. See also \cite[Theorem 26]{st:10}.

\begin{thm}\label{treestar}
If $T$ is a tree then the following are equivalent.
\begin{enumerate}
\item $T$ is Gruenhage;
\item $T$ has {\st};
\item $\dual{C_0(T)}$ admits a strictly convex dual norm;
\item $T \preccurlyeq Y$.
\end{enumerate}
\end{thm}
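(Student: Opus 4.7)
The plan is to close the cycle $(1) \Rightarrow (2) \Rightarrow (3) \Rightarrow (1)$, and then to import the equivalence $(1) \Leftrightarrow (4)$ from the existing tree-theoretic literature. Throughout I will work with the 1-point compactification $T \cup \{\infty\}$, which is a scattered compact space because $T$ is scattered in its interval topology; note that $\dual{C_0(T)}$ admits a strictly convex dual norm if and only if $\dual{C(T \cup \{\infty\})}$ does, by the standard identification $C(T \cup \{\infty\}) = C_0(T) \oplus \real$, and similarly $T$ has {\st} if and only if $T \cup \{\infty\}$ does, by adjoining the singleton family $\{T\}$ (which separates $\infty$ from every other point) to any {\st}-sequence for $T$, as in the proof of Corollary~\ref{ctbletight}.

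First I would dispatch $(1) \Rightarrow (2)$ by a direct citation of Proposition~\ref{gruimpliesstar}, which applies to any topological space. Then $(2) \Rightarrow (3)$ is immediate from Theorem~\ref{scattered} applied to the scattered compact space $T \cup \{\infty\}$: since $T \cup \{\infty\}$ has {\st}, its space of Radon measures admits a strictly convex dual norm, and restriction to the hyperplane of measures vanishing at $\infty$ furnishes a strictly convex dual norm on $\dual{C_0(T)}$. The implication $(3) \Rightarrow (1)$ is precisely where the tree structure is exploited: it is exactly the content of Theorem~\ref{smithgru}(2), which asserts that the existence of a strictly convex dual norm on $\dual{C(K)}$ for $K$ a 1-point compactification of a tree forces $K$ (hence $T$) to be Gruenhage. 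This closes the three-term loop.

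For the final equivalence $(1) \Leftrightarrow (4)$, I would invoke the order-theoretic characterisation established in \cite[Theorem 6]{smith:06} together with its topological reformulation in \cite[Corollary 17]{smith:09}: the condition $T \preccurlyeq Y$ was introduced there precisely to characterise those trees $T$ for which $\dual{C_0(T)}$ carries a strictly convex dual norm, and the equivalence with the Gruenhage property of $T$ is part of that package. Combined with the three-term loop above, this yields all four equivalences simultaneously.

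The only genuinely delicate point is the implication $(3) \Rightarrow (1)$, which I am taking as a black box from Theorem~\ref{smithgru}(2). This is the place where the special order-theoretic structure of trees is indispensable: for general scattered compacta one does not expect {\st} to imply the Gruenhage property (indeed Example~\ref{starnon-gru} rules this out under CH), so it is only by restricting to trees that all four properties collapse to a single condition. Everything else in the proof is either a direct citation or a routine transfer between $T$ and its 1-point compactification.
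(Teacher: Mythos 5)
Your proposal is correct and follows essentially the same route as the paper, which likewise obtains the theorem by combining Theorem \ref{scattered} (applied to the scattered compact space $T\cup\{\infty\}$, yielding the equivalence of {\st} with the existence of the dual norm) with Proposition \ref{gruimpliesstar} and the cited results of \cite{smith:06,smith:09} for $(1)\Rightarrow(2)$, $(3)\Rightarrow(1)$ and $(1)\Leftrightarrow(4)$. The only quibble is your transfer of the dual norm from $\dual{C(T\cup\{\infty\})}$ to $\dual{C_0(T)}$: the hyperplane of measures with $\mu(\{\infty\})=0$ is not $\weakstar$-closed and the restricted $\weakstar$-topology on it is strictly finer than $\sig{\dual{C_0(T)}}{C_0(T)}$, so one should instead restrict to the $\weakstar$-closed hyperplane $\setcomp{\mu}{\mu(T\cup\{\infty\})=0}$, which is identified as a dual space with $\dual{C_0(T)}$ via $\mu\mapsto\mu\restrict{T}$.
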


Note that the 1-point compactification $T \cup \{\infty\}$ of a tree
$T$ is countably tight if and only if $T$ admits no uncountable
branches. Indeed, suppose that $T$ admits no uncountable branches.
Since each $t \in T$ admits a countable neighbourhood, the only
point we need to test is $\infty$.
If $\infty \in \closure{A}$ for some uncountable $A \subseteq T$,
then by a standard result of Ramsey theory, either $A$ contains an
uncountable totally ordered set or a countably infinite antichain
$E$. Only the second possibility is valid, whence $\infty \in
\closure{E}$. The converse implication follows immediately from the
fact that $\wone+1$ is not countably tight. Thus we restrict our
attention to trees with no uncountable branches.

Given a partially ordered set $P$, we set
\[
\sigma P = \setcomp{A \subseteq P}{A \mbox{ is well-ordered}}.
\]
Kurepa introduced this notion and proved the following fact: for all
$P$, we have $\sigma P \not\preccurlyeq P$. On the other hand, it is
straightforward to show that $\sigma\real^\alpha \preccurlyeq
\real^{\alpha} \times \{0,1\}$ \cite[Proposition 23]{smith:06}.
Moreover, it is known that $T$ is descriptive if and only if $T
\preccurlyeq \rat$ \cite[Theorem 4]{smith:06}. Therefore, we
conclude that $\sigma\rat$ and $\sigma\real^\alpha$, $\alpha <
\wone$, are all Gruenhage, non-descriptive spaces (see \cite[p.\
752]{smith:09} or \cite[p.\ 405]{st:10}). Instead, if we consider
any total order $W$ satisfying $Y \preccurlyeq W$, then $\sigma W
\not\preccurlyeq Y$ and so $\sigma W$ does not have {\st}. In
addition, if $W$ doesn't contain any uncountable well ordered
subsets, then $\sigma W$ is free of uncountable branches.

There is another type of tree without uncountable branches and
without {\st}. A subset $E$ of a tree is a {\em final part} if $u
\in E$ whenever $t \in E$ and $t \leq u$. If $E$ is a final part
then we say that $E$ is {\em dense} if every element of $T$ is
comparable with some element of $E$, and $T$ is called {\em Baire}
if every countable intersection of dense final parts (which is
itself a final part) is again dense. A subset $E$ is called {\em
ever-branching} if, given any $t \in E$, there exist incomparable
elements $u,v \in E$ satisfying $t < u,v$. If $T$ admits an
ever-branching Baire subtree then $C_0(T)$ does not admit a
G\^{a}teaux norm \cite[Theorem 2.1]{haydon:95}. Therefore, no such
tree can have {\st}. An ever-branching Baire tree without
uncountable branches exists; see \cite[Lemma 9.12]{tod:84} and
\cite[Proposition 3.1]{haydon:95}. Recall that a tree $T$ is called
{\em Suslin} if it contains no uncountable branches or antichains.
The existence of Suslin trees is independent of ZFC; see, for
example, \cite[Section 6]{tod:84}. Every Suslin tree contains an
ever-branching Baire subtree \cite[p.\ 246]{tod:84}, so we conclude
that no Suslin tree has {\st} either.

It is clear from Theorem \ref{treestar} that in order to find
examples of non-Gruenhage spaces with {\st}, we must search further
afield. A topological space $X$ is said to be {\em hereditarily
separable} (HS) if every subspace of $X$ is separable. Clearly, the
1-point compactification of a locally compact HS space is countably
tight. These spaces are interesting for us because if $K$ is
compact, HS and non-metrisable, then it is automatically
non-descriptive. This fact is stated in \cite[Proposition
4.2]{or:04} but no direct proof is given, so an argument is sketched
here for completeness. If $\mathscr{H}$ is an isolated family of
subsets of $K$ then $\mathscr{H}$ must be countable, because by
hereditary separability there is a countable subset of
$\bigcup\mathscr{H}$ which meets every member of $\mathscr{H}$.
Therefore, if $K$ is a descriptive compact HS space then it admits a
countable network, whence it is metrisable.

Since we want compact, non-metrisable HS spaces that are also
fragmentable, it is necessary to assume extra axioms. A space $X$ is
{\em hereditarily Lindel\"of} (HL) if every subspace of $X$ is
Lindel\"of. If $K$ is compact, fragmentable and HL then it is
metrisable (cf.\ \cite[Corollary 9]{kortezov:00}). Thus, we want HS
spaces that are not HL; such objects are called {\em S-spaces}. We
refer the reader to \cite{roitman:84} for an introduction to
$S$-spaces and also the related $L$-spaces. It is known that under
MA + $\neg$CH (where MA stands for Martin's axiom), there are no
compact $S$-spaces (cf.\ \cite[Theorem 6.4.1]{roitman:84}), and in
fact it is consistent that there are no $S$-spaces at all (cf.\
\cite[Theorem 7.2.1]{roitman:84}). Therefore, we must assume extra
axioms if we are to find any animals in this particular zoo.

Our treatment of $S$-spaces proceeds as follows. First, we outline
two approaches for constructing $S$-spaces by refining existing
topologies, and show that these yield Gruenhage spaces. Second, we
give an example under CH of a compact non-Gruenhage space of
cardinality $\aleph_1$ with {\st} and show that, given a further
mild assumption, no object of this kind can exist under MA +
$\neg$CH. Finally, we present a third method of constructing
$S$-spaces and show that no such space can have {\st}.

The spaces developed using the first approach are sometimes called
`Kunen lines', despite the fact that none of them are linearly
ordered. Assuming CH, the authors of \cite{jkr:76} develop a machine
which accepts as input a first countable HS space $(X,\rho)$ of
cardinality $\aleph_1$, and generates a finer topology $(X,\tau)$
which is locally compact, scattered, HS and non-Lindel\"of. In
applications, $X$ is usually a subset of $\real$ and $\rho$ is the
induced metric topology.

Later, this process was developed to ensure that $(X,\tau)^n$ is HS
for all $n \in \nat$; \cite[Section 7]{neg:84}. The resulting
1-point compactification $\mathcal{K}$ is known to Banach space
theorists as `Kunen's compact space'. It is not explicitly stated in
\cite[Section 7]{neg:84} that the resulting topology on $X$ refines
that of the real line, but the authors believe that it is meant to.
If the topology is such a refinement then necessarily the Euclidean
diameters of the $B^\alpha_k$ (which form the building blocks of
neighbourhoods of points, see (8) on \cite[p.\ 1124]{neg:84}) have
to tend to $0$ as $k \to \infty$. It can be checked that this
condition is also sufficient to produce a refinement. We note
further that an alternative approach to \cite[Section 7]{neg:84} is
given in \cite[Theorem 2.4]{vk:04}, and there, the fact that the
original topology is refined is explicitly stated.

Of course, it is clear that any refinement of a Gruenhage space is
again Gruenhage, because we can use exactly the same open sets to
separate points. Therefore, assuming the adjustment to the diameters
of the $B^\alpha_k$ above, we have the following result.

\begin{prop}\label{kunen-gru}
The Kunen lines are Gruenhage spaces. In particular,
$\dual{C(\mathcal{K})}$ admits a strictly convex dual norm, the
predual of which is necessarily G\^ateaux smooth.
\end{prop}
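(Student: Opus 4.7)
The plan is to use the principle emphasised in the paragraph preceding the proposition: a refinement of a Gruenhage topology is Gruenhage, witnessed by exactly the same separating families. Fix the first countable, hereditarily separable base space $(X,\rho)$ of cardinality $\aleph_1$, with $X\subseteq\real$ and $\rho$ the Euclidean topology, on which the Kunen topology $\tau$ is built, and write $\mathcal{K}=X\cup\{\infty\}$ for the one-point compactification of $(X,\tau)$.

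First I would observe that $(X,\rho)$ is Gruenhage: being separable and metrisable it has a countable base $\{B_n:n\in\nat\}$, and taking $\mathscr{U}_n=\{B_n\}$ makes the intersection clause of Definition \ref{gruenhage} vacuous, while the Hausdorff property yields, for any distinct $x,y\in X$, some $n$ with $x\in B_n\subseteq X\setminus\{y\}$, so that $\{x,y\}\cap B_n$ is a singleton. Next, because $\tau$ refines $\rho$, each $B_n$ remains $\tau$-open and hence open in $\mathcal{K}$, so the sequence $(\mathscr{U}_n)$ continues to separate pairs of points from $X$. Appending one further singleton family $\{X\}$ (open in $\mathcal{K}$ since $X$ is locally compact) separates $\infty$ from every point of $X$ without disturbing the (still vacuous) intersection clause, and so $\mathcal{K}$ is Gruenhage; in particular each Kunen line $(X,\tau)$ is Gruenhage, giving the first sentence of the proposition. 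Theorem \ref{smithgru} (1) then produces a strictly convex dual lattice norm on $\dual{C(\mathcal{K})}$, and \v{S}mulyan's Lemma (\cite[Theorem I.1.4]{dgz:93}), recalled in the Introduction, automatically promotes the predual norm on $C(\mathcal{K})$ to a G\^ateaux smooth one.

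The only non-routine point, and the step flagged in advance as the main obstacle, is verifying that the Kunen topology $\tau$ genuinely refines the Euclidean topology. As the preceding discussion explains, this reduces to the requirement that the Euclidean diameters of the basic blocks $B^\alpha_k$ appearing in \cite[Section 7]{neg:84} tend to $0$ as $k\to\infty$, and is made fully explicit in the alternative construction of \cite[Theorem 2.4]{vk:04}. Once that technical point is granted, every other step above is purely formal.
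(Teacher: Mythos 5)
Your proposal is correct and follows essentially the same route as the paper: the paper's justification is precisely that a refinement of a Gruenhage topology is Gruenhage via the same separating open sets, modulo the caveat (which you also flag) that the Euclidean diameters of the blocks $B^\alpha_k$ must tend to $0$ for $\tau$ to refine the real-line topology. Your explicit verification that a second countable space is Gruenhage via singleton families, and the passage to the one-point compactification before invoking Theorem \ref{smithgru}~(1), merely fill in details the paper leaves implicit.
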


The second approach refines topologies as above, but this time using
the axiom $\mathfrak{b} = \aleph_1$, where $\mathfrak{b}$ is the
minimal cardinality of a subset of $\nat^\nat$ which is unbounded
with respect to the ordering of eventual dominance. Under
$\mathfrak{b} = \aleph_1$, it is shown in \cite[Theorem 2.5]{tod:89}
that the topology of any set of reals of cardinality $\aleph_1$ may
be refined to give a locally compact, scattered, non-Lindel\"of
topology which is HS in its finite powers.

\begin{prop}
The spaces of Todor\v{c}evi\'c in \cite[Theorem 2.5]{tod:89} are
Gruenhage.
\end{prop}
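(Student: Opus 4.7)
The plan is to imitate the proof of Proposition \ref{kunen-gru} for Kunen lines. The key point in that proof is the general principle, noted in the paragraph preceding Proposition \ref{kunen-gru}, that any topology refining a Gruenhage topology remains Gruenhage: if $(\mathscr{U}_n)_{n=1}^\infty$ and $(R_n)_{n=1}^\infty$ witness the Gruenhage property of a space $(X,\tau_0)$ in the sense of Definition \ref{gruenhage}, and $\tau \supseteq \tau_0$, then each element of each $\mathscr{U}_n$ remains $\tau$-open, the equations $U \cap V = R_n$ are purely set-theoretic and so still hold, and point separation in clause (1) is inherited verbatim.

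With that principle in hand, all that is left is to assemble the ingredients. First, $\real$ with its Euclidean topology admits a countable base, and so is trivially Gruenhage: a countable sequence of singleton families $\mathscr{U}_n = \{B_n\}$ built from such a base suffices. Gruenhage's property is evidently hereditary (restrict each $\mathscr{U}_n$ to the subspace). Thus every subspace of $\real$, and in particular every set of reals of cardinality $\aleph_1$, is Gruenhage in its Euclidean topology. Todor\v{c}evi\'c's construction in \cite[Theorem 2.5]{tod:89}, executed under $\mathfrak{b} = \aleph_1$, produces a locally compact, scattered, non-Lindel\"of topology on such a set of reals which is explicitly described as a \emph{refinement} of the Euclidean topology on the underlying set. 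Applying the refinement principle above immediately yields that Todor\v{c}evi\'c's spaces are Gruenhage.

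There is no substantive obstacle here; the only thing to verify is that Todor\v{c}evi\'c's topology genuinely refines the Euclidean one, which is stated in the hypothesis of \cite[Theorem 2.5]{tod:89} itself. In particular, just as in Proposition \ref{kunen-gru}, one obtains as a corollary that $\dual{C(L \cup \{\infty\})}$ admits a strictly convex dual norm whenever $L$ is one of Todor\v{c}evi\'c's spaces, either by invoking Theorem \ref{smithgru}(1) or directly by Theorem \ref{scattered} together with Proposition \ref{gruimpliesstar}.
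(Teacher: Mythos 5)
Your proof is correct and is essentially the argument the paper intends: the paper gives no explicit proof precisely because the preceding paragraphs already record that Todor\v{c}evi\'c's topology refines the Euclidean topology on a set of reals and that any refinement of a Gruenhage topology is again Gruenhage. Your observations that second countable (hence separable metrisable) spaces are Gruenhage via singleton families and that the property is hereditary are exactly the routine ingredients needed to complete this.
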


Before presenting our third approach to construct $S$-spaces, we
give our example under CH of a compact, scattered non-Gruenhage
space with {\st}. We shall adopt the same basic approach as
\cite{jkr:76} and \cite[Section 7]{neg:84}, and use an idea from
\cite{abd:88}. However, the underlying motivation for the space
should be compared, at a distance, to the split interval, rather
than the real line.

In fact, we construct a locally compact, scattered non-Gruenhage
space with a $\Gdelta$-diagonal. The 1-point compactification of
this space has {\st}. For our example, we shall make use of the
following observation about Gruenhage spaces of cardinality no
larger than the continuum.

\begin{prop}{\cite[Proposition 2]{st:10}}\label{oddity}
Let $X$ be a topological space with $\card{X} \leq \continuum$. Then
$X$ is Gruenhage if and only if there is a sequence
$(U_n)_{n=1}^\infty$ of open subsets of $X$ with the property that
if $x,y \in X$, then $\{x,y\} \cap U_n$ is a singleton for some $n$.
\end{prop}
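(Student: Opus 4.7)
The ``if'' direction is trivial: given the sequence $(U_n)$, take $\mathscr{U}_n = \{U_n\}$ and $R_n = \varnothing$ in Definition \ref{gruenhage}, so that condition (2) is vacuous on singleton families.

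For the converse, the plan is to compress the Gruenhage sequences $(\mathscr{U}_n)$ and $(R_n)$ into countably many open sets using the cardinality bound. The two preliminary observations are that each $R_n$ is itself open (as the intersection of any two distinct members of $\mathscr{U}_n$) and that the family $\{U \setminus R_n : U \in \mathscr{U}_n\}$ is pairwise disjoint in $X$. Together with $\card X \leq \continuum$, and after deleting the (at most one) element of $\mathscr{U}_n$ equal to $R_n$, this yields $|\mathscr{U}_n| \leq \continuum$. I would then enumerate $\mathscr{U}_n = \{U^n_\alpha : \alpha < \lambda_n\}$, fix an injection $\varphi_n \colon \lambda_n \to 2^\omega$, and define the open sets
\[
W_{n,k} \;=\; \bigcup\{U^n_\alpha : \varphi_n(\alpha)(k) = 1\}, \qquad n,k \in \nat,
\]
the candidate countable family consisting of all $R_n$, all $\bigcup \mathscr{U}_n$, and all $W_{n,k}$.

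Verification would be a short case split. Given distinct $x,y$, pick $n$ and $U^n_\alpha$ with $\{x,y\} \cap U^n_\alpha$ a singleton, say $x \in U^n_\alpha$ and $y \notin U^n_\alpha$; since $R_n \subseteq U^n_\alpha$, this forces $y \notin R_n$. If $x \in R_n$, then $R_n$ itself separates $x$ and $y$; if instead $y \notin \bigcup \mathscr{U}_n$, then $\bigcup \mathscr{U}_n$ separates; otherwise $y$ lies in a unique $U^n_\beta$ with $\beta \neq \alpha$, $x$ lies only in $U^n_\alpha$, and injectivity of $\varphi_n$ supplies $k$ with $\varphi_n(\alpha)(k) \neq \varphi_n(\beta)(k)$, making $\{x,y\} \cap W_{n,k}$ a singleton. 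The main subtlety---small, but really the point of the whole exercise---is the need to include each $R_n$ explicitly: the pure bit-encoding via the $W_{n,k}$ alone cannot distinguish a point of $R_n$ (which lies in every $U^n_\beta$) from a point of $U^n_\alpha \setminus R_n$ (which lies in only one), and it is precisely the openness of $R_n$ that rescues the construction.
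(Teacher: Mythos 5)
Your argument is correct. The paper itself gives no proof of this proposition --- it is imported from \cite[Proposition 2]{st:10} --- and what you write is essentially the standard argument from that source: pairwise disjointness of the sets $U\setminus R_n$ gives $\card{\mathscr{U}_n}\leq\continuum$, an injection of the index set into $2^{\omega}$ compresses each (possibly uncountable) family into the countably many open sets $W_{n,k}$, and the sets $R_n$ and $\bigcup\mathscr{U}_n$ are adjoined to catch the points the bit-encoding cannot distinguish. The one detail to tidy is the degenerate case $\card{\mathscr{U}_n}\leq 1$, where condition (2) of Definition \ref{gruenhage} is vacuous and so $R_n$ need be neither open nor contained in the members of $\mathscr{U}_n$; replacing such $R_n$ by $\varnothing$ costs nothing and restores the two facts you rely on, namely that $R_n$ is open and that $R_n\subseteq U$ for every $U\in\mathscr{U}_n$.
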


In that which follows, $\sdiam$ denotes Euclidean diameter.

\begin{example}\label{starnon-gru}
(CH) There exists a locally compact, scattered, first countable
Hausdorff, non-Gruenhage space with a $\Gdelta$-diagonal.
\end{example}

\begin{proof}
Let $(x_\alpha)_{\alpha < \wone}$ be a set of distinct points in
$[0,1]$. Define $Y_\alpha = \setcomp{x_\xi}{\xi < \alpha}$ and
$X_\alpha = Y_\alpha \times \{\pm 1\}$ for $\alpha \leq \wone$, with
$Y = Y_{\wone}$ and $X = X_{\wone}$. Assuming CH, let
$(A_\alpha)_{\alpha < \wone}$ be an enumeration of all the countable
subsets of $Y$. Let $\mapping{t}{X}{X}$ be the map $t(x,i) =
(x,-i)$, and let $\mapping{q}{X}{Y}$ be the natural projection. We
obtain our topology on $X$ by building increasing topologies
$\tau_\alpha$ on the $X_\alpha$, $\alpha < \wone$, by transfinite
induction. The points $(x_\alpha,i)$, $i = \pm 1$, will have a
countable base of compact open neighbourhoods $U(x_\alpha,i,n)$, $n
\in \nat$, such that
\begin{enumerate}
\item if $\beta < \alpha$ then $X_\beta$ is open in $\tau_\alpha$ and
$\tau_\beta$ is the topology on $X_\beta$ induced by $\tau_\alpha$;
\item $U(x_\alpha,i,n) \setminus\{(x_\alpha,i)\} \subseteq
X_\alpha$;
\item $\diam{q(U(x_\alpha,i,n))} < 2^{-n}$;
\item $U(x_\alpha,-i,n) = t(U(x_\alpha,i,n))$;
\item $q\restrict{U(x_\alpha,i,n)}$ is injective;
\item if $\xi \leq \alpha$, $A_\xi \subseteq Y_\alpha$
and $x_\alpha \in \closure{A_\xi}^{\real}$, then
\[
U(x_\alpha,i,n) \cap (A_\xi \times \{-i\})
\]
is non-empty for every $n$.
\end{enumerate}

To take care of limit stages $\alpha$, we set
\[
\tau_\alpha = \setcomp{U \subseteq X_\alpha}{U \cap X_\beta \in
\tau_\beta \mbox{ for all }\beta < \alpha}.
\]
Now assume that $\tau_\alpha$ has been found. We construct
$\tau_{\alpha+1}$ by constructing neighbourhoods $U(x_\alpha,i,n)$, $n
\in \nat$, of the points $(x_\alpha,i)$, $i = \pm 1$.

If $x_\alpha \notin \closure{Y_\alpha}^\real$ then set
$U(x_\alpha,i,n) = \{(x_\alpha,i)\}$ for $i = \pm 1$ and $n \in \nat$.
Note that as $Y$ is a separable subset of $\real$, this can
happen for only countably many $\alpha$. Now assume that
$x_\alpha \in \closure{Y_\alpha}^\real$. Define
\[
F_\alpha = \setcomp{\xi \leq \alpha}{A_\xi \subseteq
Y_\alpha \mbox{ and }x_\alpha \in \closure{A_\xi}^{\real}}.
\]
Since $F_\alpha$ is at most countable, we can find an injective sequence
$(s_n)_{n=1}^\infty \subseteq Y_\alpha$ converging to $x_\alpha$, such that
\begin{enumerate}
\item[(i)] $\diam{\setcomp{s_m}{m \geq n}} < 2^{-n}$ for each $n$
\item[(ii)] $\setcomp{n \in \nat}{s_n \in A_\xi}$ is infinite
whenever $\xi \in F_\alpha$.
\end{enumerate}
By considering (3) applied to $\beta < \alpha$, and (i) above, for
every $n$ we can find $k_n$ such that
\begin{enumerate}
\item[(iii)] $q(U(s_n,-1,k_n)) \cap q(U(s_m,-1,k_m)) = \varnothing$
\end{enumerate}
whenever $n \neq m$, and
\begin{enumerate}
\item[(iv)] $\diam{q\left(\bigcup_{m \geq n} U(s_m,-1,k_m) \right)} < 2^{-n}$
\end{enumerate}
for every $n$. Finally, define
\[
U(x_\alpha,i,n) = \{(x_\alpha,i)\} \cup \bigcup_{m \geq n}
U(s_m,-i,k_m).
\]
These neighbourhoods are compact and open. Extend $\tau_\alpha$ to
$\tau_{\alpha+1}$ in the obvious way. It is clear that we have (1)
and (2), and then $\tau_{\alpha+1}$ is locally compact. (3) follows
from (iv) above. That $\tau_{\alpha+1}$ is Hausdorff follows by
inductive hypothesis, (3), and the fact that $U(x_\alpha,1,n) \cap U(x_\alpha,-1,m) =
\varnothing$. (4) and (5) follow from the inductive hypothesis,
the definition of $U(x_\alpha,i,n)$ and (iii) above. To see (6), note that
\[
\setcomp{s_m}{m \geq n} \times \{-i\} \subseteq U(x_\alpha,i,n) \cap (A_\xi \times \{-i\})
\]
so (6) now follows from (ii) above. This completes the induction. The
topology on $X$ is given by
\[
\setcomp{U \subseteq X}{U \cap X_\alpha \in
\tau_\alpha \mbox{ for all }\alpha < \wone}
\]
We show that $X$ is scattered. If $E \subseteq X$ is non-empty then
let $\alpha$ be minimal, such that $E \cap \{(x_\alpha,\pm 1)\}$ is
non-empty. If $(x_\alpha,i) \in E$ then by (1) and (2), we have that
$U = X_\alpha \cup \{(x_\alpha,i)\}$ is open, and $E \cap U =
\{(x_\alpha,i)\}$.

Now we show that $X$ has a $\Gdelta$-diagonal. Set
\[
\mathscr{G}_n = \setcomp{U(x,i,n)}{(x,i) \in X}.
\]
Let $(x,i),(y,j) \in X$. If $x \neq y$ then pick $n$ such that
$|x-y| \geq 2^{-n}$. We cannot have $(y,j) \in \str((x,i),n)$
because if so we would have $(x,i),(y,j) \in U(z,k,n)$ for some $(z,k)$,
giving
\[
|x-y| \leq \diam{q(U(z,k,n))} < 2^{-n}
\]
by (3). If $x = y$ and $i \neq j$ then by (5), we cannot have
$(x,i),(y,j) \in U(z,k,n)$ for any $(z,k)$ or $n$. Whatever the case,
\[
\bigcap_{n=1}^\infty \str((x,i),n) = \{(x,i)\}.
\]
This shows that $(\mathscr{G}_n)_{n=1}^\infty$ is a $\Gdelta$-diagonal sequence.

Finally, we prove that $X$ is not Gruenhage. Bearing in mind
Proposition \ref{oddity}, we
suppose for a contradiction that there exists a sequence of open subsets
$(V_n)_{n=1}^\infty$, with the property that given $(x,i),(y,j) \in X$,
we can find $n$ such that
\[
\{(x,i),(y,j)\} \cap V_n
\]
is a singleton. Define
\[
J_{n,i} = \setcomp{x \in Y}{(x,i) \in V_n \mbox{ and }(x,-i)
\notin V_n}.
\]
By assumption, $Y = \bigcup_{n,i} J_{n,i}$, so there
exist $n$ and $i$ such that $J = J_{n,i}$ is uncountable.
Remembering that $\real$ is HS, we can find a
countable subset $A_\xi$ such that $A_\xi \subseteq J \subseteq
\closure{A_\xi}^{\real}$. Because $J$ is uncountable, we can pick
$\alpha \geq \xi$ such that $A_\xi \subseteq Y_\alpha$ and
$x_\alpha \in J \subseteq \closure{A_\xi}^{\real}$.
Since $x_\alpha \in J$, we have $(x_\alpha,i) \in V_n$, so
take $m$ such that $U(x_\alpha,i,m) \subseteq V_n$. From (6), we
know that
\[
U(x_\alpha,i,m) \cap (A_\xi \times \{-i\}) \,\subseteq\, V_n \cap (J \times \{-i\})
\]
is non-empty. However, this violates the definition of $J$.
This contradiction establishes that $X$ is not Gruenhage.
\end{proof}

Together with Theorem \ref{scattered}, this example
shows that if $\dual{C(K)}$ admits a strictly convex dual norm then
$K$ is not necessarily Gruenhage. This gives a consistent negative solution
to \cite[Problem 14]{smith:09} and \cite[Problem 4]{st:10}.

We remark that the example above need not be HS. However,
it can easily be made to be HS by changing (ii) above to read
\begin{enumerate}
\item[(ii)] $\setcomp{n \in \nat}{s_{2n},s_{2n+1} \in A_\xi}$ is infinite
whenever $\xi \in F_\alpha$
\end{enumerate}
and setting
\[
U(x_\alpha,i,n) = \{(x_\alpha,i)\} \cup \bigcup_{m \geq n}
U(s_m,(-1)^m i,k_m).
\]
To see that this makes $X$ HS, we let $E \subseteq X$ and set $E_i =
\setcomp{x \in Y}{(x,i) \in E}$, $i = \pm 1$. Then take $\xi_i <
\wone$ such that $A_{\xi_i} \subseteq E_i \subseteq
\closure{A_{\xi_i}}^{\real}$ and choose $\alpha \geq
\xi_1,\,\xi_{-1}$ large enough to satisfy $A_{\xi_1} \cup
A_{\xi_{-1}} \subseteq Y_\alpha$. It can
now be verified that $E$ is in the closure of $E \cap X_\alpha$.

There is no hope of constructing something
like Example \ref{starnon-gru} in ZFC. A space $X$ is called {\em locally
countable} if every point of $X$ admits a countable neighbourhood. For example,
trees of height at most $\wone$ and `thin-tall' locally compact spaces are
locally countable. It is straightforward to see that a locally compact, locally
countable space must be scattered.

\begin{prop}[MA + $\neg$CH]\label{manotch}
Suppose that $L$ is a locally compact, locally countable space with {\st}
and $\card{L} < \continuum$. Then $L$ is $\sigma$-discrete.
\end{prop}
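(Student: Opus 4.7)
The plan is to show that $L$ admits a $\sigma$-isolated network; since $L$ is scattered (automatic from locally compact plus locally countable, as noted just before the proposition), the remark following Definition \ref{descriptive} then yields $\sigma$-discreteness. First observe that $L$ is first countable, because each point has a countable clopen neighbourhood which, being a countable compact scattered Hausdorff space, is metrisable and hence second countable. Fix a {\st}-sequence $(\mathscr{U}_n)_{n=1}^\infty$ enhanced via Lemma \ref{newfamilies}, set $C_n=\bigcup\mathscr{U}_n$, and partition $L$ into the pieces
$$ C_I \;=\; \bigcap_{i\in I}C_i \,\setminus\, \bigcup_{i\notin I}C_i, \qquad \varnothing\neq I\subseteq\nat, $$
exactly as in the proof of Theorem \ref{scattered}. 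By the argument used in the proof of Theorem \ref{ctblecpt}, the restricted families $(\mathscr{U}_n\cap C_I)_{n\in I}$ constitute a $\Gdelta$-diagonal sequence on each $C_I$.

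The core of the proof is to convert these diagonal structures into a $\sigma$-isolated network on $L$, and it is here that MA + $\neg$CH enters. Since $\card L<\continuum$, every subspace of $L$ has cardinality less than $\continuum$, and MA + $\neg$CH rules out compact $S$-spaces at this cardinality (Szentmikl\'ossy's theorem); consequently every compact hereditarily separable subspace of the 1-point compactification $L\cup\{\infty\}$ is hereditarily Lindel\"of. Combined with fragmentability (Proposition \ref{stfrag}) and \cite[Corollary 9]{kortezov:00}, this forces metrisability, and hence $\sigma$-discreteness, of each piece $C_I$ in its relative topology. Global reassembly proceeds through the countable family of finite profiles $I\cap\{1,\ldots,n\}$, producing countably many clopen partitions of $L$ that refine $\{C_I\}$ and so permit the local $\sigma$-discrete decompositions to be spliced into a $\sigma$-isolated network for $L$.

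The main obstacle is the MA + $\neg$CH step itself: establishing hereditary separability of the appropriate compact enlargements of the $C_I$'s --- which is where local countability, the bound $\card L<\continuum$, and an MA-style diagonal construction must be combined --- and then executing the reassembly so that the resulting isolated families actually separate in $L$ rather than only in the respective pieces.
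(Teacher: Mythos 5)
Your proposal does not follow the paper's route, and as written it has genuine gaps. The paper's proof is a two-line deduction: Corollary \ref{ctbletight} shows that the one-point compactification $L\cup\{\infty\}$ is countably tight and that its sequentially closed subsets are closed, and Balogh's theorem \cite[Theorem 2.1]{balogh:83} --- which is where MA + $\neg$CH is actually consumed --- turns these properties, for a locally compact, locally countable space of cardinality below $\continuum$, into $\sigma$-discreteness. You instead try to manufacture the conclusion from the $C_I$-decomposition, and the chain breaks in two places.

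First, the step ``MA + $\neg$CH rules out compact $S$-spaces, hence the relevant subspaces are hereditarily Lindel\"of, hence by \cite[Corollary 9]{kortezov:00} each $C_I$ is metrisable'' is a non sequitur: nothing in the hypotheses makes the pieces $C_I$, or any compact enlargements of them, hereditarily separable, so Szentmikl\'ossy's theorem has nothing to act on. A $\Gdelta$-diagonal carries no separability at all --- an uncountable discrete space already has one. Moreover Kortezov's result concerns \emph{compact} fragmentable spaces, while the $C_I$ are arbitrary subsets of $L$ and need not be compact or even locally compact. Second, even granting that each $C_I$ is $\sigma$-discrete in its relative topology (and relative discreteness is intrinsic, so such pieces are discrete in $L$ too), there may be as many as $\card{L}$, in particular uncountably many, nonempty sets $C_I$; the union of their countable discrete decompositions is then not a countable union, and the ``finite profiles'' reassembly you gesture at is precisely the point that would require a proof. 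If you want a self-contained argument, the efficient path is to verify the hypotheses of Balogh's theorem, which is exactly what Corollary \ref{ctbletight} supplies.
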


\begin{proof} This follows immediately from Corollary \ref{ctbletight}
and \cite[Theorem 2.1]{balogh:83}.
\end{proof}

It is not possible use stronger axioms to extend Proposition \ref{manotch}
to include spaces of cardinality $\continuum$: the tree $\sigma\rat$ is
locally compact, locally countable and Gruenhage, but is not $\sigma$-discrete.

We end this section by presenting our third class of $S$-spaces.
We shall call a regular, uncountable topological space $X$
an {\em $O$-space} if every open subset of $X$ is either countable
or co-countable. Ostaszewski constructed a locally compact, scattered
$O$-space using the clubsuit axiom $\clubsuit$ \cite[p.\ 506]{ost:76}.
It is known that $\clubsuit$ is independent of CH and that
$\clubsuit$ + CH is equivalent to Jensen's axiom $\Diamond$
(see \cite{shelah:80} and \cite[p.\ 506]{ost:76}, respectively).
It is possible to obtain $O$-spaces by assuming principles strictly
weaker than $\clubsuit$ \cite[Theorem 2.1]{juhasz:88}. Unlike the previous
constructions, these spaces are built from scratch, rather
than by refining an initial space.

Every $O$-space contains an $S$-subspace. Indeed, if $X$ is an
$O$-space then notice that at most one point of $X$ can fail to have
a countable open neighbourhood. Thus we can construct by induction
an uncountable subspace $Y = \setcomp{x_\alpha}{\alpha < \wone}$
such that $\setcomp{x_\xi}{\xi < \alpha}$ is open in $Y$ for every
$\alpha < \wone$. Thus $Y$ is not Lindel\"of. If, for a
contradiction, we suppose that $Z \subseteq Y$ is not separable,
then by another induction we can construct an uncountable,
relatively discrete subspace of $Y$. However, this cannot exist by
the $O$-space property. Therefore $Y$ is an $S$-space. We can argue
similarly to establish that every locally compact $O$-space has a
countably tight 1-point compactification.

\begin{prop}\label{o-spacenostar}
If $X$ is an $O$-space then it does not have {\st}.
\end{prop}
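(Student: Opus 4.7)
The plan is to proceed by contradiction: assume $X$ is an $O$-space admitting a {\st}-sequence $(\mathscr{U}_n)_{n=1}^\infty$, and produce distinct $d^*,x_\xi\in X$ that no $\mathscr{U}_n$ separates. First I would replace $X$ by the $S$-subspace $Y=\{x_\alpha:\alpha<\wone\}$ constructed just before the statement: this is legitimate because $Y$ is itself an $O$-space and {\st} is inherited by subspaces. The advantages of working inside $Y$ are that every initial segment $Y_\alpha=\{x_\xi:\xi<\alpha\}$ is open and that $Y$ is hereditarily separable (HS). In particular every countable $S\subseteq Y$ lies in some $Y_\alpha$, so $\mathrm{ord}(S)<\wone$ is well defined.

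For each $n$, write $C_n=\bigcup\mathscr{U}_n$; being open in $Y$, it is countable or co-countable. The strategy is to attach a countable ``forbidden set'' $E_n\subseteq Y$ to each $n$, pick $d^*$ outside $E=\bigcup_n E_n$, then push $\xi$ past a single countable ordinal $\gamma$ encoding all relevant countable sets. If $C_n$ is countable, take $E_n=C_n$: for $\xi>\gamma$ neither $d^*$ nor $x_\xi$ lies in $C_n$, and condition (1) of Definition \ref{star} fails. If $C_n$ is co-countable and some $U_n\in\mathscr{U}_n$ is itself co-countable, take $E_n=Y\setminus U_n$: for $\xi>\gamma$ both $d^*,x_\xi\in U_n$, violating (2).

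The decisive case is when $C_n$ is co-countable while every $U\in\mathscr{U}_n$ is countable. Define $x\sim_n y$ iff some $U\in\mathscr{U}_n$ contains both, and let $\hat U_n(x)=\bigcup\{U\in\mathscr{U}_n:x\in U\}$. The transitive closure of $\sim_n$ partitions $C_n$ into pairwise disjoint open classes; by HS of $Y$ at most countably many such classes exist, and exactly one, call it $M_n$, is co-countable (otherwise $C_n$ would be a countable union of countable sets). The key sub-claim is that $L_n:=\{x\in M_n:\hat U_n(x)\text{ is co-countable}\}$ is co-countable in $Y$: if $M_n\setminus L_n$ were uncountable, HS would supply a countable dense $D\subseteq M_n\setminus L_n$; for each $x\in M_n\setminus L_n$ the open neighbourhood $\hat U_n(x)\cap(M_n\setminus L_n)$ of $x$ meets $D$ in some $d$, giving $x\sim_n d$ and hence $x\in\hat U_n(d)$, so $M_n\setminus L_n\subseteq\bigcup_{d\in D}\hat U_n(d)$; but each $\hat U_n(d)$ with $d\in M_n\setminus L_n$ is countable, so the right-hand side is countable, a contradiction. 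I therefore set $E_n=Y\setminus L_n$, which is countable.

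Now let $E=\bigcup_n E_n$, pick any $d^*\in Y\setminus E$, and choose $\gamma<\wone$ large enough that $\{d^*\}$ together with every countable $C_n$, every countable $Y\setminus U_n$, and every countable $Y\setminus\hat U_n(d^*)$ lies in $Y_\gamma$. For any $\xi>\gamma$ with $x_\xi\neq d^*$, the three cases dispatch themselves: the first two exactly as above, and in the third, $d^*\in L_n$ makes $\hat U_n(d^*)$ co-countable, so $x_\xi\in\hat U_n(d^*)$ produces $x_\xi\sim_n d^*$ and therefore a single $U\in\mathscr{U}_n$ containing both, violating (2). Hence $\{d^*,x_\xi\}$ is separated by no $\mathscr{U}_n$, contradicting {\st}. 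The hard step throughout is the co-countability of $L_n$, and that is precisely where hereditary separability of the $S$-subspace is indispensable.
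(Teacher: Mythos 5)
Your argument is correct, but its execution differs from the paper's at the decisive step, so a comparison is worthwhile. The paper works on $X$ directly: it sets $J=\setcomp{n}{C_n \text{ co-countable}}$, passes to the co-countable set $A=X\setminus E$ on which the restricted families form a $\Gdelta$-diagonal sequence, deduces that each $x\in A$ has some $n_x\in J$ with $\str(x,n_x)$ countable (else $A\cap\bigcap_{n\in J}\str(x,n)$ would be uncountable rather than $\{x\}$), pigeonholes to a single $n$ with $B=\setcomp{x\in A}{n_x=n}$ uncountable, and then runs a transfinite recursion to extract from $B$ an uncountable relatively discrete subspace, which the $O$-space property forbids. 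Your sub-claim that $L_n$ is co-countable is exactly the negation of the situation the paper's pigeonhole produces, but you prove it by a covering argument with a countable dense subset of $M_n\setminus L_n$ rather than by building a discrete subspace; this is where hereditary separability enters, forcing the (legitimate) detour through the $S$-subspace $Y$, which is itself an $O$-space and inherits {\st}. What your version buys is a stronger and more explicit conclusion: co-countably many points $d^*$ have \emph{all} their relevant stars co-countable, so you can write down a concrete unseparated pair $\{d^*,x_\xi\}$ instead of reaching the contradiction at the level of the $\Gdelta$-diagonal structure. The price is an extra layer of dependency, since the HS of $Y$ is derived in the paper from the same ``no uncountable relatively discrete subspace'' fact that powers the paper's own final step. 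Two small remarks: the openness of the initial segments $Y_\alpha$ is never actually used (any enumeration of $Y$ in type $\wone$ would serve for the bookkeeping with $\gamma$), and the equivalence-class decomposition producing $M_n$ is dispensable --- the same density-and-covering argument applied directly to $\setcomp{x\in C_n}{\str(x,n)\text{ countable}}$ shows that set is countable --- though including it does no harm.
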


\begin{proof}
Suppose that $(\mathscr{U}_n)_{n=1}^\infty$ is a {\st}-sequence for
$X$, with $C_n = \bigcup\mathscr{U}_n$ for each $n$. Set
\[
J = \setcomp{n \in I}{C_n \mbox{ is uncountable}}.
\]
If $n \in J$ then $X\setminus C_n$ is countable, so
\[
E = \bigcup_{n \in J} \left( X\setminus C_n\right) \cup \bigcup_{n \in \nat\setminus J} C_n
\]
is also countable. If we let $A = X\setminus E$ then we see that
$A \subseteq C_n$ for all $n \in J$, and $A \cap C_n$ is empty
whenever $n \notin J$. For $x \in A$ and $n \in J$, define
\[
\str(x,n) = \bigcup\setcomp{U \in \mathscr{U}_n}{x \in U}.
\]
Since $(\mathscr{U}_n)_{n=1}^\infty$ is assumed to be a {\st}-sequence
for $X$, we have
\[
\{x\} = A \cap \bigcap_{n \in J} \str(x,n)
\]
for all $x \in A$, i.e.\ $(\mathscr{U}_n)_{n=1}^\infty$ induces a
$\Gdelta$-diagonal sequence on $A$. Given this, it follows that
for each $x \in A$, there exists some $n_x \in J$ such that
$\str(x,n_x)$ is countable. Indeed, otherwise,
\[
E \cup \bigcup_{n \in J} \left( X\setminus\str(x,n) \right)
\]
is countable, giving
\[
\{x\} = A \cap \bigcap_{n \in J} \str(x,n)
\]
uncountable. Since $A$ is uncountable, there exists $n$, which we
fix from now on, such that $B = \setcomp{x \in A}{n_x = n}$
is uncountable. Take an enumeration $(x_\alpha)_{\alpha < \wone}$ of
distinct points in $B$. We find $\alpha_0 < \alpha_1 < \alpha_2 <
\ldots < \wone$ such that
\[
x_{\alpha_\eta} \notin \bigcup_{\xi < \eta} \str(x_{\alpha_\xi},n)
\]
for all $\eta < \wone$. Observe that by the symmetry of the sets
$\str(x,n)$, we have $x_{\alpha_\xi} \notin \str(x_{\alpha_\eta},n)$
whenever $\xi \neq \eta$. Therefore $C = \setcomp{x_{\alpha_\xi}}{\xi < \wone}$
is a relatively discrete subspace, which is not permitted by the
$O$-space property.
\end{proof}

\begin{example}
Ostaszewski's space \cite[p.\ 506]{ost:76} is a locally compact, scattered
HS $O$-space. Therefore, it does not have {\st}.
\end{example}

By refining Ostaszewski's construction, it is possible to use
$\clubsuit$ to build a compact, scattered non-metrisable space $K$,
such that $K^n$ is HS for all $n$ \cite[Theorem 4.36]{hmvz:07}. Moreover,
it can be checked that this $K$ is, in addition, an $O$-space. Therefore,
unlike $\dual{C(\mathcal{K})}$, the space $\dual{C(K)}$ admits no strictly
convex dual norm.

We make a remark about this $C(K)$: the authors don't know if
it admits a G\^ateaux norm. Since $K$ is separable, $C(K)$ admits
a bounded linear, injective map into $\czero$. The authors don't know of
any example of an Asplund space with an injective map into a $\czerok{\Gamma}$,
which does not admit a G\^{a}teaux norm.

\section{Problems}\label{problems}

To finish, we present a number of related, unresolved problems. The first
problem stems from Theorem \ref{scattered}.

\begin{prob}\label{normprob}
If $K$ has {\st} and is not scattered, then does $\dual{C(K)}$ admit a
strictly convex dual norm?
\end{prob}

In fact, we don't even know if $\dual{C(L \cup \{\infty\})}$ admits a
strictly convex norm whenever $L$ is a locally compact space having a
$\Gdelta$-diagonal. The next problem is prompted by Example \ref{starnon-gru}.

\begin{prob}\label{zfcstarnon-gru}
Is there in ZFC an example of a non-Gruenhage compact space with {\st}?
\end{prob}

Proposition \ref{ctsimage} suggests the next problem.

\begin{prob}\label{ctsprob}
If $K$ has {\st} and is not scattered, and $\mapping{\pi}{K}{M}$ is
a continuous, surjective map, then does $M$ have {\st}? More
generally, if a topological space $X$ has {\st} and
$\mapping{f}{X}{Y}$ is a perfect, surjective map, does $Y$ have
{\st}?
\end{prob}

It is known that the answer to Problem \ref{ctsprob} is positive in
the Gruenhage case, including the more general perfect map assertion
\cite[Theorem 23]{smith:09}. It is also known that
$\Gdelta$-diagonals are not preserved under perfect images. In
\cite[Example 2]{burke:72}, an example is given of a locally compact
scattered space $L$ having a $\Gdelta$-diagonal, and a perfect
surjective map $\mapping{f}{L}{M}$, where the diagonal of $M$ is not a
$\Gdelta$. However, $\cderiv{L}{2}$ is empty, and the same
will apply to any perfect image of $L$, so all such images are
$\sigma$-discrete and therefore have {\st}. If Problem \ref{normprob}
has a positive solution then so will the first part of Problem
\ref{ctsprob}, simply by copying the proof of Proposition \ref{ctsimage}.

For our last problem, we refer the reader to the end of Section \ref{topology}.

\begin{prob}
Does $C(K)$ admit a G\^ateaux norm, where $K$ is the $O$-space of
\cite[p.\ 506]{ost:76} or \cite[Theorem 4.36]{hmvz:07}?
\end{prob}

%The next result generalises one aspect of \cite[Corollary 4.3]{or:04}
%
%\begin{cor}
%If a locally compact space has {\st} then its 1-point compactification is countably tight.
%\end{cor}
%
%\begin{proof}
%This follows immediately from Proposition \ref{perfectpreimage} and \cite[Lemma 2.1]{balogh:83}.
%\end{proof}
%

\bibliographystyle{amsplain}

\end{document}